\newcommand{\ep}{\underline{\epsilon}}
\newcommand{\onen}{{\mathbf 1}_{n}}
\newcommand{\onenn}[1]{{\mathbf 1}_{#1}}
\newcommand{\onenp}{{\mathbf 1}_{n'}}
\newcommand{\onenpp}{{\mathbf 1}_{n''}}
\newcommand{\onem}{{\mathbf 1}_{m}}
\newcommand{\onel}{{\mathbf 1}_{\lambda}}
\newcommand{\onell}[1]{{\mathbf 1}_{#1}}
\newcommand{\onelp}{{\mathbf 1}_{\lambda'}}
\newcommand\rE{{\sf{E}}}
\newcommand\rF{{\sf{F}}}
\newcommand\sE{{\cal{E}}}
\newcommand\sF{{\cal{F}}}
\def\cal#1{\mathcal{#1}}%
\newcommand{\bV}{\textstyle{\bigwedge_q}}
\newcommand{\Alt}[2]{\textstyle{\bigwedge_{#1}^{#2}}}
\newcommand{\Alts}[2]{ \textstyle{\scriptstyle\bigwedge_{#1}^{#2}}}
\def\1{\mathbbm{1}}%
\newcommand\E{{\sf{E}}}
\newcommand\F{{\sf{F}}}
\newcommand\T{{\sf{T}}}
\def\l{\lambda}
\newcommand{\END}{{\rm END}}
\newcommand{\Gr}{\cat{Flag}_{N}}
\newcommand{\Grn}[1]{\cat{Flag}_{#1}}
\newcommand{\Uq}{{\bf U}_q(\mathfrak{sl}_2)}
\newcommand{\U}{\dot{{\bf U}}}
\newcommand{\Ucat}{\cal{U}}
\newcommand{\Ucatc}{\check{\cal{U}}}
\newcommand{\Ucatq}{\cal{U}_q}
\newcommand{\UcatD}{\dot{\cal{U}}}
\newcommand{\UcatDq}{\dot{\cal{U}}_q}
\newcommand{\B}{\dot{\mathbb{B}}}
\newcommand{\Bnm}{{_m\dot{\cal{B}}_n}}
\newcommand{\UA}{{_{\cal{A}}\dot{{\bf U}}}}
\newcommand{\sln}{\mf{sl}_n}
\newcommand{\slm}{\mf{sl}_m}
\newcommand{\und}[1]{\underline{#1}}
\newcommand{\foam}[3][N]{#2\cat{Foam}_{#3}(#1)}
\newcommand{\Bfoam}[3][N]{#2\cat{BFoam}_{#3}(#1)}
\newcommand{\qbin}[2]{
\left[
 \begin{array}{c}
 #1 \\
 #2 \\
 \end{array}
 \right]_{q^2}
}
\newcommand{\qbins}[2]{
\left[
 \begin{array}{c}
 \scs #1 \\
 \scs #2 \\
 \end{array}
 \right]
}
\newcommand{\xsum}[2]{
  \xy
  (0,.4)*{\sum};
  (0,3.7)*{\scs #2};
  (0,-2.9)*{\scs #1};
  \endxy
}
\newcommand{\refequal}[1]{\xy {\ar@{=}^{#1}
(-1,0)*{};(1,0)*{}};
\endxy}
\newcommand{\cat}[1]{\ensuremath{\mbox{\bfseries {\upshape {#1}}}}}
\newcommand{\numroman}{\renewcommand{\labelenumi}{\roman{enumi})}}
\newcommand{\numarabic}{\renewcommand{\labelenumi}{\arabic{enumi})}}
\newcommand{\numAlph}{\renewcommand{\labelenumi}{\Alph{enumi}.}}
\newcommand{\To}{\Rightarrow}
\newcommand{\TO}{\Rrightarrow}
\newcommand{\Hom}{{\rm Hom}}
\newcommand{\HOM}{{\rm HOM}}
\renewcommand{\to}{\rightarrow}
\newcommand{\maps}{\colon}
\newcommand{\op}{{\rm op}}
\newcommand{\co}{{\rm co}}
\newcommand{\iso}{\cong}
\newcommand{\id}{{\rm id}}
\newcommand{\bigb}[1]{
\begin{pspicture}(0,0)
 \rput(0,0){\psframebox[framearc=.5,fillstyle=solid]{\small $#1$}}
\end{pspicture}}
\newcommand{\del}{\partial}
\newcommand{\Res}{{\rm Res}}
\newcommand{\End}{{\rm End}}
\newcommand{\Aut}{{\rm Aut}}
\newcommand{\im}{{\rm im\ }}
\newcommand{\coim}{{\rm coim\ }}
\newcommand{\chr}{{\rm char\ }}
\newcommand{\coker}{{\rm coker\ }}
\newcommand{\spann}{{\rm span}}
\newcommand{\rk}{{\rm rk\ }}
\def\bigboxtimes{\mathop{\boxtimes}\limits}
\newcommand{\scs}{\scriptstyle}
\def\Res{{\mathrm{Res}}}
\def\Ind{{\mathrm{Ind}}}
\def\lra{{\longrightarrow}}
\def\dmod{{\mathrm{-mod}}}   
\def\fmod{{\mathrm{-fmod}}}   
\def\pmod{{\mathrm{-pmod}}}  
\def\rk{{\mathrm{rk}}}
\def\NH{{\mathrm{NH}}}
\def\pseq{{\mathrm{Seqd}}}
\def\Id{\mathrm{Id}}
\def\mc{\mathcal}
\def\mf{\mathfrak}
\def\Af{{_{\mc{A}}\mathbf{f}}}    
\def\primef{{'\mathbf{f}}}    
\def\shuffle{\,\raise 1pt\hbox{$\scriptscriptstyle\cup{\mskip
               -4mu}\cup$}\,}
\newcommand{\define}{\stackrel{\mbox{\scriptsize{def}}}{=}}
\theoremstyle{definition}
\newtheorem{thm}{Theorem}[section]
\newtheorem{lem}[thm]{Lemma}
\newtheorem{rem}[thm]{Remark}
\newtheorem{prop}[thm]{Proposition}
\newtheorem{defn}[thm]{Definition}
\numberwithin{equation}{section}
\def\AL#1{\textcolor[rgb]{1.00,0.00,0.00}{[AL: #1]}}%
\def\HQ#1{\textcolor[rgb]{0.00,1.00,0.00}{[HQ: #1]}}%
\def\DR#1{\textcolor[rgb]{0.00,0.00,1.00}{[DR: #1]}}%
\def\MK#1{[MK: #1]}%
\def\AB#1{[AB: #1]}%
\def\b{$\blacktriangleright$}
\def\e{$\blacktriangleleft$}
\def\new#1{\b #1\e}%
\def\emph#1{{\sl #1\/}}
\def\ie{{\sl i.e.\/}}
\def\eg{{\sl e.g.\/}}
\def\Eg{{\sl E.g.\/}}
\def\etc{{\sl etc.\/}}
\def\cf{{\sl c.f.\/}}
\def\etal{\sl{et al.\/}}%
\def\vs{\sl{vs.\/}}%
\let\hat=\widehat
\let\tilde=\widetilde
\let\phi=\varphi
\let\theta=\vartheta
\let\epsilon=\varepsilon
\def\C{{\mathbbm C}}
\def\N{{\mathbbm N}}
\def\R{{\mathbbm R}}
\def\Z{{\mathbbm Z}}
\def\Q{{\mathbbm Q}}
\def\H{{\mathbbm H}}
\def\cal#1{\mathcal{#1}}%
\def\1{\mathbbm{1}}%
\def\ev{\mathrm{ev}}%
\def\coev{\mathrm{coev}}%
\def\tr{\mathrm{tr}}%
\def\st{\mathrm{st}}%
\def\pullback#1#2#3{%
  \,\mbox{\raisebox{-.8ex}{$\scriptstyle #1$}}%
  \!\prod\!
  \mbox{\raisebox{-.8ex}{$\scriptstyle #3$}}\,}%
\def\nn{\notag}
\newcommand{\ontop}[2]{\genfrac{}{}{0pt}{2}{\scriptstyle #1}{\scriptstyle #2}}
\def\la{\langle}
\def\ra{\rangle}
\newcommand{\sdotu}[1]{\xybox{%
  (-2,0)*{};
  (2,0)*{};
  (0,0)*{}; (0,-9)*{} **\dir{-}?(.5)*{\bullet} ?(0)*\dir{<};
  (0,-11)*{\scs  #1};
}}
\newcommand{\urcurve}[1]{\xybox{
(-8,0)*{};
  (8,0)*{};
  (0,0);(0,18) **\crv{(8,9)}?(1)*\dir{>};
  (0,-2)*{ \scs #1}
}}
\newcommand{\ulcurve}[1]{\xybox{
(-8,0)*{};
  (8,)*{};
  (0,0);(0,18) **\crv{(-8,9)}?(1)*\dir{>};
  (0,-2)*{ \scs #1}
}}
\newcommand{\slineu}[1]{\xybox{%
  (-2,0)*{};
  (2,0)*{};
  (0,0)*{}; (0,-9)*{} **\dir{-}; ?(0)*\dir{<};
  (0,-11)*{\scs  #1};
}}
\newcommand{\drcurve}[1]{\xybox{
(-8,0)*{};
  (8,0)*{};
  (0,0);(0,18) **\crv{(10,9)}?(.5)*\dir{<}+(2.5,0)*{ \scs #1};
}}
\newcommand{\dlcurve}[1]{\xybox{
(-8,0)*{};
  (8,)*{};
  (0,0);(0,18) **\crv{(-10,9)}?(.5)*\dir{<}+(-2.5,0)*{ \scs #1};
}}
\newcommand{\lineu}[1]{\xybox{%
  (-2,0)*{};
  (2,0)*{};
  (0,0)*{}; (0,-18)*{} **\dir{-}; ?(.5)*\dir{<}+(1.7,-7)*{\scs #1};
}}
\newcommand{\lined}[1]{\xybox{%
  (-2,0)*{};
  (2,0)*{};
  (0,0)*{}; (0,-18)*{} **\dir{-}; ?(.5)*\dir{>}+(1.7,-7)*{\scs #1};
}}
\newcommand{\dotu}[1]{\xybox{%
  (-2,0)*{};
  (2,0)*{};
  (0,0)*{}; (0,-18)*{} **\dir{-}
  ?(.2)*{\bullet} ?(.5)*\dir{<}+(1.7,-7)*{\scs #1};
}}
\newcommand{\twod}{\xybox{%
  (-6,0)*{};
  (6,0)*{};
  (0,6)*{}="f";
  (-3,0)*{}="t1";
  (3,0)*{}="t2";
  (0,12)*{}="b";
  "t1";"f" **\crv{(-3,4)};
  "t2";"f" **\crv{(3,4)};
  "f"+(.5,0);"b"+(.5,0) **\dir{-};
  "f"+(-.5,0);"b"+(-.5,0) **\dir{-};
}}
\newcommand{\twom}{\xybox{%
  (-6,0)*{};
  (6,0)*{};
  (0,6)*{}="f";
  (-3,12)*{}="t1";
  (3,12)*{}="t2";
  (0,0)*{}="b";
  "t1";"f" **\crv{(-3,8)};
  "t2";"f" **\crv{(3,8)};
  "f"+(.5,0);"b"+(.5,0) **\dir{-};
  "f"+(-.5,0);"b"+(-.5,0) **\dir{-};
}}
\newcommand{\twoI}{\xybox{%
  (-6,0)*{};
  (6,0)*{};
  (0,6)*{}="f'";
  (0,12)*{}="f";
  (-3,18)*{}="t1";
  (3,18)*{}="t2";
    (-3,0)*{}="t1'";
  (3,0)*{}="t2'";
  (0,0)*{}="b";
  "t1";"f" **\crv{(-3,14)};
  "t2";"f" **\crv{(3,14)};
  "f"+(.5,0);"f'"+(.5,0) **\dir{-};
  "f"+(-.5,0);"f'"+(-.5,0) **\dir{-};
  "t1'";"f'" **\crv{(-3,4)};
  "t2'";"f'" **\crv{(3,4)};
}}
\newcommand{\twoIu}[1]{\xybox{%
  (-6,0)*{};
  (6,0)*{};
  (0,6)*{}="f'";
  (0,12)*{}="f";
  (-3,18)*{}="t1";
  (3,18)*{}="t2";
    (-3,0)*{}="t1'";
  (3,0)*{}="t2'";
  (0,0)*{}="b";
  "t1";"f" **\crv{(-3,14)};?(.1)*\dir{<};
  "t2";"f" **\crv{(3,14)};?(.1)*\dir{<};
  "f"+(.5,0);"f'"+(.5,0) **\dir{-};
  "f"+(-.5,0);"f'"+(-.5,0) **\dir{-};
  "t1'";"f'" **\crv{(-3,4)};
  "t2'";"f'" **\crv{(3,4)} ?(.15)*\dir{ }+(2,0)*{\scs #1};
}}
\newcommand{\lowrru}[1]{\xybox{%
  (-8,0)*{};
  (8,0)*{};
  (-6,-18)*{};(6,-9)*{} **\crv{(-6,-13) & (6,-15)} ?(1)*\dir{>};
  (6,-9)*{};(6,0)*{}  **\dir{-} ?(.3)*\dir{ }+(2,0)*{\scs {\bf j}};
}}
\newcommand{\lowllu}[1]{\xybox{%
  (-8,0)*{};
  (8,0)*{};
  (6,-18)*{};(-6,-9)*{} **\crv{(6,-13) & (-6,-15)} ?(1)*\dir{>};
  (-6,-9)*{};(-6,0)*{}  **\dir{-} ?(.3)*\dir{ }+(-2,0)*{\scs {\bf j}};
}}
\newcommand{\highrru}[1]{\xybox{%
  (-8,0)*{};
  (8,0)*{};
  (6,0)*{};(-6,-9)*{} **\crv{(6,-5) & (-6,-3)} ?(1)*\dir{<};
  (-6,-9)*{};(-6,-18)*{}  **\dir{-} ?(.3)*\dir{ }+(-2,0)*{\scs #1};
}}
\newcommand{\highllu}[1]{\xybox{%
  (-8,0)*{};
  (8,0)*{};
  (-6,0)*{};(6,-9)*{} **\crv{(-6,-5) & (6,-3)} ?(1)*\dir{<};
  (6,-9)*{};(6,-18)*{}  **\dir{-} ?(.3)*\dir{ }+(2,0)*{\scs #1};
}}
\newcommand{\bbmf}[3]{\xybox{%
  (-6,0)*{};
  (6,0)*{};
  (0,-8)*{}="f";
  (-4,0)*{}="t1";
  (4,0)*{}="t2";
  (0,-16)*{}="b";
  "t1";"f" **\crv{(-4,-4)}; ?(.35)*\dir{>}+(-2.3,0)*{\scriptstyle{#1}};
  "t2";"f" **\crv{(4,-4)}; ?(.35)*\dir{>}+(2.3,0)*{\scriptstyle{#2}};
  "f";"b" **\dir{-}; ?(.75)*\dir{>}+(3.8,0)*{\scriptstyle{#3}};;
}}
\newcommand{\bbme}[3]{\xybox{%
  (-6,0)*{};
  (6,0)*{};
  (0,-8)*{}="f";
  (-4,0)*{}="t1";
  (4,0)*{}="t2";
  (0,-16)*{}="b";
  "t1";"f" **\crv{(-4,-4)}; ?(.35)*\dir{<}+(-2.3,0)*{\scriptstyle{#1}};
  "t2";"f" **\crv{(4,-4)}; ?(.35)*\dir{<}+(2.3,0)*{\scriptstyle{#2}};
  "f";"b" **\dir{-}; ?(.6)*\dir{<}+(3.8,0)*{\scriptstyle{#3}};
}}
\newcommand{\bbdf}[3]{\xybox{%
  (-6,0)*{};
  (6,0)*{};
  (0,-8)*{}="f";
  (-4,-16)*{}="b1";
  (4,-16)*{}="b2";
  (0,-0)*{}="t";
  "f";"b1" **\crv{(-4,-12)}; ?(.75)*\dir{>}+(-2.3,0)*{\scriptstyle{#1}};
  "f";"b2" **\crv{(4,-12)}; ?(.75)*\dir{>}+(2.3,0)*{\scriptstyle{#2}};
  "t";"f" **\dir{-}; ?(.35)*\dir{>}+(3.8,0)*{\scriptstyle{#3}};
}}
\newcommand{\bbde}[3]{\xybox{%
  (-6,0)*{};
  (6,0)*{};
  (0,-8)*{}="f";
  (-4,-16)*{}="b1";
  (4,-16)*{}="b2";
  (0,-0)*{}="t";
  "f";"b1" **\crv{(-4,-12)}; ?(.6)*\dir{<}+(-2.3,0)*{\scriptstyle{#1}};
  "f";"b2" **\crv{(4,-12)}; ?(.6)*\dir{<}+(2.3,0)*{\scriptstyle{#2}};
  "t";"f" **\dir{-}; ?(.3)*\dir{<}+(3.8,0)*{\scriptstyle{#3}};
}}
\newcommand{\ccbub}[1]{
\xybox{%
 (-6,0)*{};
  (6,0)*{};
  (-4,0)*{}="t1";
  (4,0)*{}="t2";
  "t2";"t1" **\crv{(4,6) & (-4,6)};
  ?(.05)*\dir{>} ?(1)*\dir{>};
  "t2";"t1" **\crv{(4,-6) & (-4,-6)};
   ?(.25)*\dir{}+(0,0)*{\bullet}+(0,-3)*{\scs {#1}};
}}
\newcommand{\iccbub}[2]{
\xybox{%
 (-6,0)*{};
  (6,0)*{};
  (-4,0)*{}="t1";
  (4,0)*{}="t2";
  "t2";"t1" **\crv{(4,6) & (-4,6)}; ?(.7)*\dir{}+(-2,0)*{\scs #2}
  ?(.05)*\dir{>} ?(1)*\dir{>};
  "t2";"t1" **\crv{(4,-6) & (-4,-6)};
   ?(.3)*\dir{}+(0,0)*{\bullet}+(0,-3)*{\scs {#1}};
}}
\newcommand{\icbub}[2]{
\xybox{%
 (-6,0)*{};
  (6,0)*{};
  (-4,0)*{}="t1";
  (4,0)*{}="t2";
  "t2";"t1" **\crv{(4,6) & (-4,6)};?(.7)*\dir{}+(-2,0)*{\scs #2};
   ?(0)*\dir{<} ?(.95)*\dir{<};
  "t2";"t1" **\crv{(4,-6) & (-4,-6)};
   ?(.3)*\dir{}+(0,0)*{\bullet}+(0,-3)*{\scs {#1}};
}}
\newcommand{\cbub}[1]{
\xybox{%
 (-6,0)*{};
  (6,0)*{};
  (-4,0)*{}="t1";
  (4,0)*{}="t2";
  "t2";"t1" **\crv{(4,6) & (-4,6)};
   ?(0)*\dir{<} ?(.95)*\dir{<};
  "t2";"t1" **\crv{(4,-6) & (-4,-6)};
   ?(.75)*\dir{}+(0,0)*{\bullet}+(0,-3)*{\scs {#1}};
}}
\newcommand{\bbe}[1]{\xybox{%
  (-2,0)*{};
  (2,0)*{};
  (0,0);(0,-18) **\dir{-}; ?(.5)*\dir{<}+(2.3,0)*{\scriptstyle{#1}};
}}
\newcommand{\bbf}[1]{\xybox{%
  (-2,0)*{};
  (2,0)*{};
  (0,0);(0,-18) **\dir{-}; ?(.5)*\dir{>}+(2.3,0)*{\scriptstyle{#1}};
}}
\newcommand{\bbelong}[1]{\xybox{%
  (-2,0)*{};
  (2,0)*{};
  (0,0);(0,-22) **\dir{-}; ?(.5)*\dir{<}+(2.3,0)*{\scriptstyle{#1}};
}}
\newcommand{\bbflong}[1]{\xybox{%
  (-2,0)*{};
  (2,0)*{};
  (0,0);(0,-22) **\dir{-}; ?(.5)*\dir{>}+(2.3,0)*{\scriptstyle{#1}};
}}
\newcommand{\bbsid}{\xybox{%
  (-2,0)*{};
  (2,0)*{};
  (0,10);(0,4) **\dir{-};
}}
\newcommand{\bbpef}[1]{\xybox{%
  (-6,0)*{};
  (6,0)*{};
  (-4,0)*{}="t1";
  (4,0)*{}="t2";
  "t1";"t2" **\crv{(-4,-6) & (4,-6)}; ?(.15)*\dir{>} ?(.9)*\dir{>}
    ?(.5)*\dir{}+(0,-2)*{\scriptstyle{#1}};
}}
\newcommand{\bbpfe}[1]{\xybox{%
  (-6,0)*{};
  (6,0)*{};
  (-4,0)*{}="t1";
  (4,0)*{}="t2";
  "t2";"t1" **\crv{(4,-6) & (-4,-6)}; ?(.15)*\dir{>} ?(.9)*\dir{>} ?(.5)*\dir{}+(0,-2)*{\scriptstyle{#1}};
}}
\newcommand{\bbcfe}[1]{\xybox{%
  (-6,0)*{};
  (6,0)*{};
  (-4,0)*{}="t1";
  (4,0)*{}="t2";
  "t1";"t2" **\crv{(-4,6) & (4,6)}; ?(.15)*\dir{>} ?(.9)*\dir{>}
  ?(.5)*\dir{}+(0,2)*{\scriptstyle{#1}};
}}
\newcommand{\bbcef}[1]{\xybox{%
  (-6,0)*{};
  (6,0)*{};
  (-4,0)*{}="t1";
  (4,0)*{}="t2";
  "t2";"t1" **\crv{(4,6) & (-4,6)}; ?(.15)*\dir{>}
  ?(.9)*\dir{>} ?(.5)*\dir{}+(0,2)*{\scriptstyle{#1}};
}}
\newcommand{\lbbcef}[1]{\xybox{%
  (-8,0)*{};
  (8,0)*{};
  (-4,0)*{}="t1";
  (4,0)*{}="t2";
  "t2";"t1" **\crv{(4,6) & (-4,6)}; ?(.15)*\dir{>}
  ?(.9)*\dir{>} ?(.5)*\dir{}+(0,2)*{\scriptstyle{#1}};
}}
\newcommand{\sccbub}[1]{%
\xybox{%
 (-6,0)*{};
  (6,0)*{};
  (-4,0)*{}="t1";
  (4,0)*{}="t2";
  "t2";"t1" **\crv{(4,6) & (-4,6)}; ?(.05)*\dir{>} ?(1)*\dir{>};
  "t2";"t1" **\crv{(4,-6) & (-4,-6)}; ?(.3)*\dir{}+(2,-1)*{\scs #1};
}}
\newcommand{\scbub}[1]{%
\xybox{%
 (-6,0)*{};
  (6,0)*{};
  (-4,0)*{}="t1";
  (4,0)*{}="t2";
  "t2";"t1" **\crv{(4,6) & (-4,6)}; ?(0)*\dir{<} ?(.95)*\dir{<};
  "t2";"t1" **\crv{(4,-6) & (-4,-6)}; ?(.3)*\dir{}+(2,-1)*{\scs #1};
}}
\newcommand{\mcbub}[1]{%
\xybox{%
 (-12,0)*{};
  (12,0)*{};
  (-8,0)*{}="t1";
  (8,0)*{}="t2";
  "t2";"t1" **\crv{(8,10) & (-8,10)}; ?(0)*\dir{<} ?(1)*\dir{<};
  "t2";"t1" **\crv{(8,-10) & (-8,-10)}; ?(.3)*\dir{}+(2,-1)*{\scs #1};
}}
\newcommand{\mccbub}[1]{%
\xybox{%
 (-12,0)*{};
  (12,0)*{};
  (-8,0)*{}="t1";
  (8,0)*{}="t2";
  "t2";"t1" **\crv{(8,10) & (-8,10)}; ?(0)*\dir{>} ?(1)*\dir{>};
  "t2";"t1" **\crv{(8,-10) & (-8,-10)}; ?(.3)*\dir{}+(2,-1)*{\scs #1};
}}
\newcommand{\lccbub}[1]{
\xybox{%
 (-12,0)*{};
  (12,0)*{};
  (-12,0)*{}="t1";
  (12,0)*{}="t2";
  "t2";"t1" **\crv{(12,14) & (-12,14)}; ?(0)*\dir{>} ?(1)*\dir{>};
  "t2";"t1" **\crv{(12,-14) & (-12,-14)}; ?(.3)*\dir{}+(2,-1)*{\scs #1};
}}
\newcommand{\lcbub}[1]{
\xybox{%
 (-12,0)*{};
  (12,0)*{};
  (-12,0)*{}="t1";
  (12,0)*{}="t2";
  "t2";"t1" **\crv{(12,14) & (-12,14)}; ?(0)*\dir{<} ?(1)*\dir{<};
  "t2";"t1" **\crv{(12,-14) & (-12,-14)}; ?(.3)*\dir{}+(2,-1)*{\scs #1};
}}
\newcommand{\xlccbub}[1]{
\xybox{%
 (-12,0)*{};
  (12,0)*{};
  (-20,0)*{}="t1";
  (20,0)*{}="t2";
  "t2";"t1" **\crv{(20,22) & (-20,22)}; ?(0)*\dir{>} ?(1)*\dir{>};
  "t2";"t1" **\crv{(20,-22) & (-20,-22)}; ?(.3)*\dir{}+(2,-1)*{\scs #1};
}}
\newcommand{\xlcbub}[1]{
\xybox{%
 (-12,0)*{};
  (12,0)*{};
  (-20,0)*{}="t1";
  (20,0)*{}="t2";
  "t2";"t1" **\crv{(20,24) & (-20,24)}; ?(0)*\dir{<} ?(1)*\dir{<};
  "t2";"t1" **\crv{(20,-24) & (-20,-24)}; ?(.3)*\dir{}+(2,-1)*{\scs #1};
}}
\newcommand{\scap}{
\xybox{%
(-6,0)*{};
  (6,0)*{};
 (-4,0)*{};(4,0)*{}; **\crv{(4,5) & (-4,5)};
 }}
\newcommand{\mcap}{
\xybox{%
 (0,6)*{};(0,-6)*{};
 (-8,0)*{};(8,0)*{}; **\crv{(8,10) & (-8,10)};
 }}
\newcommand{\lcap}{
\xybox{%
 (-12,2)*{};(12,2)*{}; **\crv{(12,14) & (-12,14)};
 }}
\newcommand{\xlcap}{
\xybox{%
 (-20,0)*{};(20,0)*{}; **\crv{(20,22) & (-20,22)};
 }}
\newcommand{\scupfe}{\xybox{%
 (-4,0)*{};(4,0)*{}; **\crv{(4,-5) & (-4,-5)} ?(0)*\dir{<} ?(.95)*\dir{<};
 }}
\newcommand{\mcupfe}{\xybox{%
 (-8,0)*{};(8,0)*{}; **\crv{(8,-10) & (-8,-10)} ?(0)*\dir{<} ?(.95)*\dir{<};
 }}
\newcommand{\lcupfe}{\xybox{%
 (-12,0)*{};(12,0)*{}; **\crv{(12,-14) & (-12,-14)} ?(0)*\dir{<} ?(.95)*\dir{<};
 }}
\newcommand{\xlcupfe}{\xybox{%
 (-20,0)*{};(20,0)*{}; **\crv{(20,-22) & (-20,-22)} ?(0)*\dir{<} ?(.95)*\dir{<};
 }}
\newcommand{\scupef}{\xybox{%
 (-4,0)*{};(4,0)*{}; **\crv{(4,-5) & (-4,-5)} ?(0)*\dir{>} ?(1)*\dir{>};
 }}
\newcommand{\mcupef}{\xybox{%
 (-8,0)*{};(8,0)*{}; **\crv{(8,-10) & (-8,-10)} ?(0)*\dir{>} ?(1)*\dir{>};
 }}
\newcommand{\lcupef}{\xybox{%
 (12,0)*{};(-12,0)*{} **\crv{(12,-12) & (-12,-12)} ?(0)*\dir{>} ?(1)*\dir{>};
 }}
\newcommand{\xlcupef}{\xybox{%
 (-20,0)*{};(20,0)*{}; **\crv{(20,-22) & (-20,-22)} ?(0)*\dir{>} ?(1)*\dir{>};
 }}
\newcommand{\ecross}{\xybox{%
(-6,0)*{};
  (6,0)*{};
 (-4,-4)*{};(4,4)*{} **\crv{(-4,-1) & (4,1)}?(1)*\dir{>};
 (4,-4)*{};(-4,4)*{} **\crv{(4,-1) & (-4,1)}?(1)*\dir{>};
 }}
\newcommand{\fcross}{\xybox{%
 (-4,-4)*{};(4,4)*{} **\crv{(-4,-1) & (4,1)}?(1)*\dir{<};
 (4,-4)*{};(-4,4)*{} **\crv{(4,-1) & (-4,1)}?(1)*\dir{<};
 }}
\newcommand{\fecross}{\xybox{%
 (-4,-4)*{};(4,4)*{} **\crv{(-4,-1) & (4,1)}?(1)*\dir{>};
 (4,-4)*{};(-4,4)*{} **\crv{(4,-1) & (-4,1)}?(1)*\dir{<};
 }}
\newcommand{\efcross}{\xybox{%
 (-4,-4)*{};(4,4)*{} **\crv{(-4,-1) & (4,1)}?(1)*\dir{<};
 (4,-4)*{};(-4,4)*{} **\crv{(4,-1) & (-4,1)}?(1)*\dir{>};
 }}
\newcommand{\seline}{\xybox{%
 (0,-4)*{};(0,4)*{} **\dir{-} ?(1)*\dir{>};
}}
\newcommand{\sfline}{\xybox{%
 (0,-4)*{};(0,4)*{} **\dir{-} ?(1)*\dir{<};
}}
\newcommand{\meline}{\xybox{%
 (0,-8)*{};(0,8)*{} **\dir{-} ?(1)*\dir{>};
}}
\newcommand{\mfline}{\xybox{%
 (0,-8)*{};(0,8)*{} **\dir{-} ?(1)*\dir{<};
}}
\newcommand{\leline}{\xybox{%
 (0,-12)*{};(0,12)*{} **\dir{-} ?(1)*\dir{>};
}}
\newcommand{\lfline}{\xybox{%
 (0,-12)*{};(0,12)*{} **\dir{-} ?(1)*\dir{<};
}}
\newcommand{\chsheet}[1][1]{
\xy
(0,0)*{
};
\endxy
}
\begin{document}
%

\title[Khovanov homology is a $2$-representation]
{Khovanov homology is a skew Howe $2$-representation of
categorified quantum $\mathfrak{sl}_m$}

\author{Aaron D. Lauda}
\address{Department of Mathematics, University of Southern California, Los Angeles, CA 90089, USA}
\email{lauda@usc.edu}

\author{Hoel Queffelec}
\address{Institut de Math\'ematiques de Jussieu, Universit\'e Paris Diderot, 175 rue du Chevaleret, 75013 Paris, France}
\email{queffelec@math.jussieu.fr}

\author{David E. V. Rose}
\address{Department of Mathematics, University of Southern California, Los Angeles, CA 90089, USA}
\email{davidero@usc.edu}

\begin{abstract}
We show that Khovanov homology (and its $\mathfrak{sl}_3$ variant) can be
understood in the context of higher representation theory. Specifically, we show that the combinatorially defined
foam constructions of these theories arise as a family of $2$-representations of categorified quantum
$\mathfrak{sl}_m$ via categorical skew Howe duality.
Utilizing Cautis-Rozansky categorified clasps we also obtain a unified construction of foam-based categorifications of
Jones-Wenzl projectors and their $\mf{sl}_3$ analogs purely from the higher representation theory of
categorified quantum groups.  In the $\mf{sl}_2$ case, this work reveals the importance of a modified class of
foams introduced by Christian Blanchet which in turn suggest a similar modified version of the
$\mf{sl}_3$ foam category introduced here.
\end{abstract}

\maketitle
\setcounter{tocdepth}{2}
\tableofcontents

%
\section{Introduction}
%


\subsection{Categorified knot invariants and quantum groups}

One of the original motivations for categorifying quantum groups was to provide a representation theoretic explanation for the existence of Khovanov homology and other link homologies categorifying quantum link invariants. Just as the Jones polynomial is described representation theoretically by the quantum group $U_q(\mf{sl}_2)$ and tensor powers of its two dimensional representation, the categorification of the Jones polynomial via Khovanov homology should be described in terms of the 2-representation theory of the categorified quantum group associated to $U_q(\mf{sl}_2)$.

Currently, the link between categorified quantum groups and Khovanov homology follows the indirect path through Webster's work on categorified tensor products~\cite{Web,Web2}. This connection utilizes an isomorphism relating Webster's categorifications of tensor products with categories associated to blocks of parabolic graded category $\cal{O}$.  Categorifications associated with category $\cal{O}$ were initiated by Bernstein, Frenkel, and Khovanov~\cite{BFK} and were further developed in \cite{Strop2,FKS}.  The relation to the familiar picture-world~\cite{BN1,BN2} of Khovanov homology then relies on several technical results of Stroppel~\cite{Strop1,Strop2} relating the knot homologies constructed using category $\cal{O}$ to Khovanov's more elementary construction~\cite{Kh1,Kh2}.  More generally, for link homology theories associated with fundamental $\mf{sl}_n$ representations, Webster describes an isomorphism relating his construction to Sussan's category $\cal{O}$ based link homology theory~\cite{Sussan}
,
which is related via Koszul duality to a theory defined by Mazorchuk and Stroppel~\cite{MS2}.  When $n=3$, the latter of these link homologies can then be identified~\cite{MS2} with Khovanov's more elementary construction~\cite{Kh5} of $\mf{sl}_3$ link homology defined using singular cobordisms called foams.

Alternatively, there is an algebro-geometric construction of Khovanov homology and related $\mf{sl}_n$ link homologies due to Cautis and Kamnitzer~\cite{CK01,CK02}.  These knot homologies arise from derived categories of coherent sheaves on algebraic varieties associated to orbits in the affine Grassmannian.  In the $\mf{sl}_2$ case this knot homology agrees with Khovanov homology~\cite[Theorem 8.2]{CK01} and these geometric categories can be understood as 2-representations of categorified quantum groups~\cite{CLau,CKL2}.  These link homologies are related to those of Seidel-Smith~\cite{SeSm} and Manolescu~\cite{Man} by mirror symmetry.

In this article, we provide a direct construction of foam based $\mf{sl}_n$ link homology theories for $n=2$ or $n=3$ intrinsically in terms of categorified quantum groups.  We show that all of the components involved in these knot homologies are already present within the structure of categorified quantum groups including the relations in foam categories and the complexes defining the braiding.  Utilizing Cautis-Rozansky categorified
clasps~\cite{Cautis,Roz} we also obtain categorified projectors lifting Jones-Wenzl idempotents and their $\mf{sl}_3$ analogs purely from the higher relations of categorified quantum groups.  In the $\mf{sl}_2$ case this work reveals the importance of a modified class of foams introduced by Christian Blanchet~\cite{Blan}, suggesting that this version of the foam category is most natural from the perspective of categorified quantum groups.  In the $\mf{sl}_3$ case these results suggest a similar modified version of the $\mf{sl}_3$ foam category.

\subsection{Categorified representation theory}
Recall that in categorified representation theory,  $\C(q)$-vector spaces $V$ with decompositions into weight spaces $V = \oplus_{\lambda}V_{\lambda}$, are replaced by graded categories $\mathcal{V} = \oplus_{\lambda} \mathcal{V}_{\lambda}$, and instead of linear maps between spaces, Chevalley generators act by functors  $\mathcal{E}_i\onel \maps \mathcal{V}_{\lambda} \to \mathcal{V}_{\lambda+\alpha_i}$, $\mathcal{F}_i\onel \maps \mathcal{V}_{\lambda} \to \mathcal{V}_{\lambda-\alpha_i}$ satisfying quantum Serre relations up to isomorphism of functors.  The higher structure of categorified representation theory appears at the level of natural transformations between these functors.  In most instances when $U_q(\mf{sl}_n)$ admits a categorical action of this form, the natural transformations that appear between functors are predictable and can be systematically described.  A key part of this structure is that $\cal{F}$ is a left and right adjoint for $\cal{E}$ and that the endomorphisms of $\cal{E}^a$ are
acted upon by the so called KLR algebras developed in ~\cite{CR,KL,KL2,Rou2}.

In \cite{Lau1,KL3} it was suggested that the full structure of categorical representations of $U_q(\sln)$ is described by a 2-functor from an additive 2-category $\UcatD_Q(\mf{sl}_n)$.  This 2-category categorifies Lusztig's modified version $\U_q(\mf{sl}_n)$~\cite{Lus1} of the quantum group $U_q(\sln)$.
The objects of $\UcatD_Q(\sln)$ are indexed by the weight lattice of $\U_q(\sln)$, 1-morphisms correspond to the elements of $\U_q(\mf{sl}_n)$, and the 2-morphisms govern the natural transformations that appear in categorical representations.  However, the 2-category $\UcatD_Q(\sln)$ has additional relations on 2-morphisms beyond specified adjoints and KLR relations.  We refer to the collection of relations on 2-morphisms as ``higher relations" because they can be viewed as replacements for the quantum Serre relations.  Indeed, these higher relations give rise to explicit isomorphisms lifting the defining relations in $\U_q(\mf{sl}_n)$, while simultaneously controlling the Grothendieck group of $\UcatD_Q(\mf{sl}_n)$, allowing for a $\Z[q,q^{-1}]$-algebra isomorphism between its split Grothendieck ring $K_0(\UcatD_Q(\mf{sl}_n))$ and the integral version of $\U_q(\mf{sl}_n)$.  Under this isomorphism, the images of indecomposable 1-morphisms from $\UcatD_Q(\sln)$ map to the canonical basis of
$\U_q(\mf{sl}_n)$~\cite{Lau1,Web4}.

Here we show that these higher relations also encode the information needed to construct {\em all} $\mf{sl}_2$ and $\mf{sl}_3$ knot homology theories in a framework where computations are accessible.

\subsection{Braidings via skew Howe duality.}
The key insight for our elementary construction of knot homologies from categorified quantum groups is the fundamental observation of Cautis, Kamnitzer and Licata that the $R$-matrix describing the braiding in an $m$-fold tensor product of fundamental representations of $U_q(\mf{sl}_n)$ in Reshetikhin-Turaev link invariants can be obtained from a deformed Weyl group action associated with $U_q(\slm)$~\cite{CKL}.

Recall that the Weyl group $W$ of a simply-laced Kac-Moody algebra $\mathfrak{g}$ is a finite Coxeter group associated to the root system of $\mathfrak{g}$.   Passing from $U(\mathfrak{g})$ to $U_q(\mathfrak{g})$, the Weyl group deforms to a braid group of type $\mf{g}$, which acts on $U_q(\mathfrak{g})$-modules.
In the simplest case of $\mathfrak{g}=\mathfrak{sl}_2$, the Weyl group $W=\mathfrak{S}_2$
deforms to the braid group $B_2$ giving a reflection isomorphism $T \maps V_{\lambda} \to V_{-\lambda}$ between weight spaces of a $U_q(\mathfrak{sl}_2)$-module. This action  can be expressed in a completion of the idempotented quantum algebra $\U_q(\mf{sl}_2)$ by the power series
\begin{equation} \label{eq_def_tau}
 T 1_{\lambda} = \sum_{s \geq 0} (-q)^s F^{(\lambda+s)}E^{(s)}1_{\lambda} \quad \text{$\lambda\geq 0$}, \qquad \quad
 T 1_{\lambda} =\sum_{s\geq 0} (-q)^s  E^{(-\lambda+s)}F^{(s)}1_{\lambda} \quad \text{$\lambda\leq 0$}.
\end{equation}
On any finite-dimensional representation, $T 1_{\lambda}$ can be expressed as a finite sum.  When $\mf{g}= \slm$ and $W=\mathfrak{S}_m$, there are analogous maps $T_i1_{\lambda}$ for each $1 \leq i \leq m-1$ satisfying
the braid relations.

Cautis, Kamnitzer, and Licata related the braiding of fundamental $U_q(\sln)$ representations to the Weyl group action using a version of Howe duality for exterior algebras they called skew Howe duality~\cite{CKL}.    The key idea is to study quantum exterior powers.  Denote by $\C^n_q$ the standard $\U_q(\sln)$-module with basis denoted $x_1, \dots, x_n$.  The quantum exterior algebra is the $\U_q(\sln)$-module defined as
\[
\Alt{q}{\bullet}(\C^n_q)=\C(q)\langle x_1,\dots,x_n\rangle/(x_i^2,\ x_ix_j+qx_jx_i \; \; \text{for  $i<j$}).
\]
By assigning degree one to each $x_i$ the quantum exterior algebra is a graded $\U_q(\sln)$-module whose homogenous subspace of degree $N$ is denoted by $\Alt{q}{N}(\C^n_q)$.

The space $\Alt{q}{N}(\C^n_q\otimes \C^m_q)$ admits commuting actions of $\U_q(\slm)$ and $\U_q(\sln)$
which constitute a Howe pair. For example, when $m=2$ the space $\Alt{q}{N}(\C^n_q \otimes \C^2_q)$ decomposes into $\U_q(\mf{sl}_2)$ weight spaces as
\[
\Alt{q}{N}(\C^n_q \otimes \C^2_q) \cong \Alt{q}{N}(\C^n_q \oplus \C^n_q) \cong
\bigoplus_{a+b=N} \Alt{q}{a}(\C^n_q) \otimes \Alt{q}{b}(\C^n_q),
\]
where the weight of a summand $\Alt{q}{a}(\C^n_q) \otimes \Alt{q}{b}(\C^n_q)$ is $\lambda=b-a$. The action of $\U_q(\mf{sl}_2)$ is given by maps
\begin{align} \label{eqn_Esplit}
 E1_{\lambda} &\maps \Alt{q}{a}(\C^n_q) \otimes \Alt{q}{b}(\C^n_q) \to
 \Alt{q}{a-1}(\C^n_q) \otimes \Alt{q}{b+1}(\C^n_q) , \\
 F1_{\lambda} &\maps \Alt{q}{a}(\C^n_q) \otimes \Alt{q}{b}(\C^n_q) \to
 \Alt{q}{a+1}(\C^n_q) \otimes \Alt{q}{b-1}(\C^n_q). \nn
 \end{align}
For more details on quantum skew Howe duality see ~\cite{CKM,Cautis}.

The Weyl group action gives an isomorphism between the $\lambda$th and $-\lambda$th weight spaces of  $\Alt{q}{N}(\C^n_q \otimes \C^2_q)$.
 \[
 \xy (0,10)*{};
  (16,0)*+{\scs \Alts{q}{a}\scs(\C^n_q) \otimes \Alts{q}{b}\scs(\C^n_q)}="3";
  (48,0)*+{\scs \Alts{q}{a-1}\scs(\C^n_q) \otimes \Alts{q}{b+1}\scs(\C^n_q)}="4";
  (68,0)*{\cdots};
  (0,0)*{\cdots};
    {\ar@/^1.2pc/^{\scs E} "3";"4"};
    {\ar@/^1.2pc/^{\scs F} "4";"3"};
  (-16,0)*+{\scs \Alts{q}{b}\scs(\C^n_q) \otimes \Alts{q}{a}\scs(\C^n_q)}="3'";
  (-48,0)*+{\scs \Alts{q}{b+1}\scs(\C^n_q) \otimes \Alts{q}{a-1}\scs(\C^n_q)}="4'";
  (-68,0)*{\cdots};
    {\ar@/^1.2pc/^{\scs F} "3'";"4'"};
    {\ar@/^1.2pc/^{\scs E} "4'";"3'"};
    (-18,-8)*{ \textcolor[rgb]{0.60,0.00,0.00}{-\lambda}};
    (18,-8)*{ \textcolor[rgb]{0.60,0.00,0.00}{\lambda}};
    (-48,-8)*{ \textcolor[rgb]{0.60,0.00,0.00}{-\lambda-2}};
    (48,-8)*{ \textcolor[rgb]{0.60,0.00,0.00}{\lambda+2}};
    \textcolor[rgb]{0.60,0.00,0.00}{{\ar@{<->}@/_2.3pc/^{T} "3"; "3'"}}
 \endxy
\]

Since $\C^n_q$ is the defining representation of $\U_q(\mf{sl}_n)$, the quantum exterior powers $\Alt{q}{a}(\C^n_q) = V_{\omega_a}$ correspond to fundamental $\U_q(\mf{sl}_n)$-representations, where $\omega_a$ for $1 \leq a \leq n-1$ are the fundamental weights of $\mf{sl}_n$.  The deformed reflection isomorphism
\[
 \xy
  (-40,0)*+{V_{\omega_a} \otimes V_{\omega_b} \cong \Alt{q}{a}(\C^n_q) \otimes \Alt{q}{b}(\C^n_q)}="1";
  (40,0)*+{\Alt{q}{b}(\C^n_q) \otimes \Alt{q}{a}(\C^n_q)\cong V_{\omega_b}\otimes V_{\omega_a}.}="2";
  {\ar^{T} "1";"2"}
 \endxy
\]
gives a braiding of fundamental representations that agrees with the $R$-matrix in the Reshetikhin-Turaev construction~\cite{CKL} (up to a power of $\pm q$).  The key advantage of this realization of the $R$-matrix in terms of skew Howe duality is that it suggests a procedure for categorification.

\subsection{Knot homology from categorical skew Howe duality}

Following the ideas of Chuang and Rouquier~\cite{CR} (see also ~\cite{CK}), one can define a categorification of the reflection isomorphism $T1_{\lambda} \maps V_{\lambda} \to V_{-\lambda}$ using the 2-category $\UcatD_Q(\mf{sl}_2)$ categorifying $\U_q(\mf{sl}_2)$.   Passing to the category of complexes $Kom(\UcatD_Q(\mf{sl}_2))$, it is possible to define a complex $\cal{T}\onel$ of 1-morphisms
\begin{equation}
 \xy
 (-46,0)*+{\scs  \cal{E}^{(-\lambda)}\onel}="1";
 (-22,0)*+{\scs  \cal{E}^{(-\lambda+1)}\cal{F}\onel\{ 1 \}}="2";
 (15,0)*+{\scs \cal{E}^{(-\lambda+2)}\cal{F}^{(2)}\onel \{ 2\}}="3";
 (68,0)*+{\scs \cal{E}^{(-\lambda+k)}\cal{F}^{(k)}\onel \{ k \}}="4";
 {\ar^-{d_1} "1";"2"};
 {\ar^-{d_2} "2";"3"};
 {\ar^-{} "3";(36,0)*{}};
  (42,0)*{\cdots};
 {\ar (48,0)*{};"4"};
 {\ar^-{d_{k+1}} "4"; (92,0)*{}};
 (98,0)*{\cdots};
 \endxy
\end{equation}
for $\lambda\leq 0$ and a similar complex for $\lambda\geq 0$ (compare with \eqref{eq_def_tau}).
The differentials in this complex can be explicitly defined using the 2-morphisms in $\UcatD_Q(\mathfrak{sl}_2)$. Verification that $d^2=0$ follows from the relations in the 2-category $\UcatD_Q(\mf{sl}_2)$; the enhanced graphical calculus from \cite{KLMS} is useful for this computation.

Given a 2-representation $\mathcal{V}$ of the 2-category $\UcatD_Q(\mathfrak{sl}_2)$ with weight decomposition into abelian categories $\mathcal{V}_{\lambda}$, the functor of tensoring with the complex $\cal{T}\onel$ gives rise to derived equivalences $\cal{T}\onel \maps D(\mathcal{V}_{\lambda}) \to D(\mathcal{V}_{-\l})$.
The resulting derived equivalences are highly non-trivial and have led to the resolution of several important conjectures~\cite{CR,CKL2,CKL3}.  Our interest in these equivalences stems from their application to knot homology theory. Given a categorification of $\Alt{q}{N}(\C^n_q\otimes \C^2_q)$ with commuting categorical actions of $\UcatD_Q(\mf{sl}_n)$ and $\UcatD_Q(\mf{sl}_2)$,  the categorified braid group action gives a categorification of the $R$-matrix. More generally, one can categorify the braid group action on an $m$-fold tensor product of $U_q(\sln)$
representation using the categorified braid group action coming from the deformed
Weyl group action of $\U_q(\slm)$~\cite{CK}.

In fact, Cautis and Kamnitzer's  algebro-geometric construction of Khovanov homology~\cite{CK01} and $\mf{sl}_n$ link homology~\cite{CK02} can be understood in this framework.  Their invariants arise from a categorification of $\Alt{q}{N}(\C^n_q\otimes \C^m_q)$ defined from derived categories of coherent sheaves on varieties related to orbits in the affine Grassmannian~\cite[Theorem 2.6]{Cautis}.

\subsection{Reinterpreting $\mf{sl}_n$ skein theory using skew Howe duality.}

While categorifications of $\Alt{q}{N}(\C^n_q\otimes \C^m_q)$ defined via derived categories of coherent sheaves are far from elementary,
it turns out that this story has a more combinatorial description.
In the decategorified case, the usual skein theory description of $\mf{sl}_n$ link invariants in terms of
MOY-calculus~\cite{MOY} can also be understood in terms of skew Howe duality.

Recall that an $\mf{sl}_n$ web is a graphical presentation of intertwiners between tensor products of
fundamental representations of $U_q(\mf{sl}_n)$.
When $n=2$, the calculus of $\mf{sl}_2$ webs is described by the Temperley-Lieb algebra; Kuperberg
described the $n=3$ case using a graphical calculus of oriented trivalent graphs~\cite{Kup} which depict the
morphisms in a combinatorially defined pivotal category called the $\mf{sl}_3$ spider. These descriptions have
recently been generalized by Cautis, Kamnitzer, and Morrison~\cite{CKM} to general $n$, building on
earlier work of Kim~\cite{Kim} and Morrison~\cite{Mor}. We briefly summarize this construction,
referring the reader to their work for the details.

The category $n\cat{Web}$ is the pivotal category whose objects are sequences in the symbols
$\{1^{\pm},\ldots,(n-1)^{\pm}\}$. Morphisms are oriented graphs with edges labeled by $\{1,\ldots,n-1\}$
generated by the following:
\begin{equation}\label{webgen}
\xy
(0,0)*{
\begin{tikzpicture} [decoration={markings,
                        mark=at position 0.6 with {\arrow{>}};    }]
\draw[very thick, postaction={decorate}] (0,0) -- (0,1);
\node at (0,1.2) {$k+l$};
\draw[very thick, postaction={decorate}] (-.875,-.5) -- (0,0);
\node at (-1.2,-.5) {$k$};
\draw[very thick, postaction={decorate}] (.875,-.5) -- (0,0);
\node at (1.2,-.5) {$l$};
\end{tikzpicture}};
\endxy
\quad , \quad
\xy
(0,0)*{
\begin{tikzpicture} [decoration={markings,
                        mark=at position 0.6 with {\arrow{>}};    }]
\draw[very thick, postaction={decorate}] (0,1) -- (0,0);
\node at (0,1.2) {$k+l$};
\draw[very thick, postaction={decorate}] (0,0) -- (.875,-.5);
\node at (-1.2,-.5) {$k$};
\draw[very thick, postaction={decorate}] (0,0) -- (-.875,-.5);
\node at (1.2,-.5) {$l$};
\end{tikzpicture}};
\endxy
\quad , \quad
\xy
(0,0)*{
\begin{tikzpicture} [decoration={markings,
                        mark=at position 0.6 with {\arrow{>}};    }]
\draw[very thick, postaction={decorate}] (0,-.5) -- (0,.25);
\node at (0,-.7) {$k$};
\draw[very thick, postaction={decorate}] (0,1) -- (0,.25);
\node at (0,1.2) {$n-k$};
\draw[very thick] (0,.25) -- (.2,.25);
\end{tikzpicture}};
\endxy
\quad , \quad
\xy
(0,0)*{
\begin{tikzpicture} [decoration={markings,
                        mark=at position 0.6 with {\arrow{>}};    }]
\draw[very thick, postaction={decorate}] (0,.25) -- (0,-.5);
\node at (0,-.7) {$k$};
\draw[very thick, postaction={decorate}] (0,.25) -- (0,1);
\node at (0,1.2) {$n-k$};
\draw[very thick] (0,.25) -- (.2,.25);
\end{tikzpicture}};
\endxy
\end{equation}
where a strand labeled by $k$ is directed out from the label $k^{+}$ and into the label $k^{-}$ in the domain, and
vice versa in the codomain. These graphs, called $\mf{sl}_n$ webs, are considered up to isotopy (relative to
their boundary) and local relations. The category $n\cat{Web}$ can be identified with the full subcategory of
$U_q(\mf{sl}_n)$ representations generated (as a pivotal category) by the fundamental representations by
identifying the symbol $k^{+}$ with $\Alt{q}{k}(\C^n_q)$ and identifying $k^{-}$ with its dual. Sequences
correspond to tensor products of the relevant representations.

The connection to skew Howe duality is given by considering a related family of $m$-sheeted web categories.
Let $n\cat{Web}_{m}(N)$ denote the category whose objects are sequences $\mathbf{a}=(a_1, a_2, \ldots, a_m)$
with $0\leq a_i \leq n$ and $\displaystyle \sum_{i=1}^m a_i = N$.
Note that here we allow the symbols $0$ and $n$ in the object sequences, but none of the
dual symbols $k^{-}$.
As above, these labels should be interpreted as representations $\Alt{q}{k}(\C^n_q)$ for $0 \leq k \leq n$ with
$\Alt{q}{0}(\C^n_q) = \Alt{q}{n}(\C^n_q)=\C(q)$ corresponding to the trivial representation.
Morphisms in $n\cat{Web}_{m}(N)$ are given by $\mf{sl}_n$ webs mapping between the symbols
$a_i \neq 0,n$ in each sequence.

Via skew Howe duality, the action of $\U_q(\mf{sl}_m)$ on $\Alt{q}{N}(\C^n_q\otimes \C^m_q)$ gives morphisms
between tensor products of fundamental representations. This map has a graphical interpretation
described in \cite{CKM} using ``ladder diagrams'' to represent webs:
\[
1_\l \mapsto
\xy
(0,0)*{
\begin{tikzpicture} [decoration={markings, mark=at position 0.6 with {\arrow{>}}; },scale=.75]
\draw [very thick, postaction={decorate}] (3,0) -- (0,0);
\draw [very thick, postaction={decorate}] (3,1) -- (0,1);
\node at (3.6,0) {$a_1$};
\node at (3.6,1) {$a_m$};
\node at (1.5,.7) {$\vdots$};
\end{tikzpicture}};
\endxy
\]
\[
E_i 1_\l \mapsto
\xy
(0,0)*{
\begin{tikzpicture} [decoration={markings, mark=at position 0.6 with {\arrow{>}}; },scale=.75]
\draw [very thick, postaction={decorate}] (3,0) -- (2,0);
\draw[very thick, postaction={decorate}] (2,0) -- (0,0);
\draw [very thick, postaction={decorate}] (3,1) -- (1,1);
\draw[very thick, postaction={decorate}] (1,1) -- (0,1);
\draw [very thick, postaction={decorate}] (2,0) -- (1,1);
\node at (3.6,0) {$a_i$};
\node at (3.6,1) {$a_{i+1}$};
\node at (-1,0) {$a_{i} - 1$};
\node at (-1,1) {$a_{i+1}+1$};
\end{tikzpicture}};
\endxy
\]
\[
F_i 1_\l \mapsto
\xy
(0,0)*{
\begin{tikzpicture} [decoration={markings, mark=at position 0.6 with {\arrow{>}}; },scale=.75]
\draw [very thick, postaction={decorate}] (3,0) -- (1,0);
\draw[very thick, postaction={decorate}] (1,0) -- (0,0);
\draw [very thick, postaction={decorate}] (3,1) -- (2,1);
\draw[very thick, postaction={decorate}] (2,1) -- (0,1);
\draw [very thick, postaction={decorate}] (2,1) -- (1,0);
\node at (3.6,0) {$a_i$};
\node at (3.6,1) {$a_{i+1}$};
\node at (-1,0) {$a_{i} + 1$};
\node at (-1,1) {$a_{i+1} - 1$};
\end{tikzpicture}};
\endxy
\]
where these diagrams should be read from right to left and we omit $m-2$ lines in each of the latter two diagrams (compare with \eqref{eqn_Esplit}).
The sequences on the right are
determined by the $\mf{sl}_m$ weight $\l=(\l_1, \ldots , \l_{m-1})$ by $\l_i = a_{i+1}-a_i$;
edges connected to the label $0$ should be deleted and those
connected to the label $n$ should be truncated to the ``tags''  depicted in the last two diagrams in
equation~\eqref{webgen}.

In this paper, we categorify Cautis, Kamnitzer, and Morrison's construction for the cases $n=2$ and $n=3$.
In fact, for the $\mf{sl}_2$ case we work with related categories $2\cat{BWeb}_m(N)$ when we allow strands
labeled by $2$ and no longer require the tag morphisms\footnote{Here the ``B'' stands for Blanchet; this
category is related to a decategorification of his work \cite{Blan}.}.
For example in $2\cat{BWeb}_2(2)$ we have the
morphism:
\[
\xy
(0,0)*{
};
\endxy
\]
depicts a morphism in $3\cat{Web}_4(6)$. We will later also consider a (categorified) version of this category
in which we retain $3$-labeled edges.

We will now exhibit the power of the skew Howe approach to diagrammatic representation theory (which hints
at the utility of its categorified counterpart) in an example. The decomposition of
$\Alt{q}{3}(\C^3_q \otimes \C^2_q)$ into $\U_q(\mf{sl}_2)$ weight spaces
gives:
\[
\xymatrix{
\Alt{q}{3}(\C^3_q) \otimes \Alt{q}{0}(\C^3_q) \ar@/^1pc/[r]^{E 1_{-3}}
& \Alt{q}{2}(\C^3_q) \otimes \Alt{q}{1}(\C^3_q) \ar@/^1pc/[r]^{E 1_{-1}} \ar@/^1pc/[l]^{F 1_{-1}}
& \Alt{q}{1}(\C^3_q) \otimes \Alt{q}{2}(\C^3_q)\ar@/^1pc/[r]^{E 1_1} \ar@/^1pc/[l]^{F 1_{1}}
& \Alt{q}{0}(\C^3_q) \otimes \Alt{q}{3}(\C^3_q) \ar@/^1pc/[l]^{F 1_3}
}
\]
or diagrammatically:
\[
\xymatrix{
(3,0) \ar@/^1pc/[rr]^{
\xy
(0,0)*{
};
\endxy
\]
since $T 1_0 = 1_0 - qFE 1_0$. Up to a power of $q$, this recovers the formula for the
positive crossing from \cite{Kup}; the negative crossing can be recovered by considering
$T^{-1} 1_0 = 1_0 - q^{-1}EF 1_0$.

In a similar manner, one can recover the $\mf{sl}_2$ skein theory (i.e. the Kauffman bracket) from the
action of $_{\cal{A}}\U_q(\slm)$ on $2\cat{Web}_m(N)$.
In fact, Cautis, Kamnitzer, and Morrison use this approach to deduce the $\sln$ web relations for $n \geq 4$.
One can use their setup to give a combinatorial description of
$\sln$ link invariants labeled by any fundamental representation of $U_q(\sln)$.

Moreover, one may realize the invariant of a link (or tangle) as the image of an element in
$\U_q(\slm)$ under the (appropriate) skew Howe map.
For example, the $\mf{sl}_3$ invariant of the Hopf link
\[
\xy
(0,0)*{
\begin{tikzpicture} [decoration={markings, mark=at position 0.6 with {\arrow{<}}; },scale=.75]
\draw [very thick, postaction={decorate}] (0,0) -- (1,1);
\draw [very thick, postaction={decorate}] (1,1) -- (4,1);
\draw [very thick, postaction={decorate}] (6,1) -- (7,1);
\draw [very thick, postaction={decorate}] (7,1) -- (8,0);
\draw [very thick, postaction={decorate}] (8,0) -- (0,0);
\draw [very thick] (4,2) -- (5,1);
\draw [very thick] (4,1) -- (4.4,1.4);
\draw [very thick] (4.6,1.6) -- (5,2);
\draw [very thick] (5,2) -- (6,1);
\draw [very thick] (5,1) -- (5.4,1.4);
\draw [very thick] (5.6,1.6) -- (6,2);
\draw [very thick, postaction={decorate}] (2,3) -- (3,2);
\draw [very thick, postaction={decorate}] (3,2) -- (4,2);
\draw [very thick, postaction={decorate}] (6,2) -- (9,2);
\draw [very thick, postaction={decorate}] (9,2) -- (10,3);
\draw [very thick, postaction={decorate}] (10,3) -- (2,3);
\node at (10.3,3) {\small $3$};
\node at (10.3,2) {\small $0$};
\node at (10.3,1) {\small $0$};
\node at (10.3,0) {\small $3$};
\node at (-.3,3) {\small $3$};
\node at (-.3,2) {\small $0$};
\node at (-.3,1) {\small $0$};
\node at (-.3,0) {\small $3$};
\end{tikzpicture}};
\endxy
\]
is the element in
\[
\End_{\mf{sl}_3}(\Alt{q}{3}(\C^3_q) \otimes \Alt{q}{0}(\C^3_q) \otimes \Alt{q}{0}(\C^3_q) \otimes \Alt{q}{3}(\C^3_q))
\cong \C(q)
\]
given by the action of the element
$F_1 E_3 T_2^2 E_1 F_3 1_{(-3,0,3)} \in \U_q(\mf{sl}_4)$
on $\Alt{q}{6}(\C^3_q \otimes \C^4_q)$.

\subsection{Foamation functors for knot homologies}

The observations from the previous section suggest an approach to obtaining diagrammatic $\sln$ link homologies using categorical skew Howe duality. In his work categorifying the $\mf{sl}_3$  polynomial, Khovanov utilized
certain singular web cobordisms called foams~\cite{Kh5}.  In \cite{MSV} these singular surfaces were generalized to the $\mf{sl}_n$ case to supply a diagrammatic counterpart of Khovanov-Rozansky homology~\cite{KhR,KhR2,MV}.  These foams also appear to be connected with category $\mathcal{O}$~\cite{MS2} and with Soergel bimodules~\cite{Vaz}.  However, unlike Khovanov's construction for $\mf{sl}_3$, there is no known finite set of relations on $\mf{sl}_n$ foams for $n>3$ that guarantee any closed foam can be evaluated to an element of the ground ring.  For general $\mf{sl}_n$, matrix factorizations become the primary computation tool~\cite{Yon, Yon2,Wu}, and the only way to evaluate a closed foam is through the mysterious Kapustin-Li formula~\cite{MSV,DM}.  For foams this formula was discovered by Khovanov and Rozansky~\cite{KhR3} generalizing work of the physicists Vafa~\cite{Vafa},  Kapustin, and Li~\cite{KapLi}.  It arises from the topological Landau-Ginzburg model associated to components of the foam.  A
purely combinatorial foam construction of $\mf{sl}_n$ link homology remains an important open problem.

Foams can be viewed as a categorification of webs.  Indeed, this point of view motivates our approach to constructing $\mf{sl}_n$ link homologies for $n=2$ and $n=3$. In section~\ref{sec:foams} we describe 2-categories of $m$-sheeted $\sln$ foams categorifying the above web categories.  We define 2-functors $\Phi_n \maps \UcatD_Q(\slm) \to \foam{n}{m}$ for $n=2$ and $n=3$.  The existence of such functors was predicted by Khovanov and previously defined by Mackaay in the $n=3$ case working in the restrictive setting of $\Z/ 2\Z$ coefficients in \cite{Mac} where he called them ``foamation'' 2-functors.

Here we reinterpret Mackaay's work (and extend it to the $\mf{sl}_2$ case)
using skew Howe duality, defining foamation functors for $n=2,3$ with integer coefficients. Working over $\Z$,
it is not obvious for $n=2$ how to connect categorified quantum groups with the Bar-Natan's foam description of Khovanov homology.  For example, one of the most basic relations for categorified quantum groups
$\UcatD_Q(\slm)$ is the nilHecke relation
\[
\xy 0;/r.18pc/:
  (4,4);(4,-4) **\dir{-}?(0)*\dir{}+(2.3,0)*{};
  (-4,4);(-4,-4) **\dir{-}?(0)*\dir{}+(2.3,0)*{};
 \endxy
 \quad =
\xy 0;/r.18pc/:
  (0,0)*{\xybox{
    (-4,-4)*{};(4,4)*{} **\crv{(-4,-1) & (4,1)}?(1)*\dir{}?(.25)*{\bullet};
    (4,-4)*{};(-4,4)*{} **\crv{(4,-1) & (-4,1)}?(1)*\dir{};
     (-10,0)*{};(10,0)*{};
     }};
  \endxy
 \;\; -
\xy 0;/r.18pc/:
  (0,0)*{\xybox{
    (-4,-4)*{};(4,4)*{} **\crv{(-4,-1) & (4,1)}?(1)*\dir{}?(.75)*{\bullet};
    (4,-4)*{};(-4,4)*{} **\crv{(4,-1) & (-4,1)}?(1)*\dir{};
     (-10,0)*{};(10,0)*{};
     }};
  \endxy
 \;\; =
\xy 0;/r.18pc/:
  (0,0)*{\xybox{
    (-4,-4)*{};(4,4)*{} **\crv{(-4,-1) & (4,1)}?(1)*\dir{};
    (4,-4)*{};(-4,4)*{} **\crv{(4,-1) & (-4,1)}?(1)*\dir{}?(.75)*{\bullet};
     (-10,0)*{};(10,0)*{};
     }};
  \endxy
 \;\; -
  \xy 0;/r.18pc/:
  (0,0)*{\xybox{
    (-4,-4)*{};(4,4)*{} **\crv{(-4,-1) & (4,1)}?(1)*\dir{} ;
    (4,-4)*{};(-4,4)*{} **\crv{(4,-1) & (-4,1)}?(1)*\dir{}?(.25)*{\bullet};
     (-10,0)*{};(10,0)*{};
     }};
  \endxy
\]
which should correspond to the so called ``neck-cutting'' relation
\[
\xy
 (0,0)*{
};
\endxy
\]
via the foamation $2$-functor.
However, the signs under this assignment do not match.  One can try to rescale the foamation functors, but
one quickly finds that there is no way to fix the signs under this assignment.

The difficulty in matching the neck-cutting relation with the nilHecke relation is closely related to the solution of another famous problem related to Khovanov homology. As originally defined,
Khovanov homology is a projective functorial invariant, meaning that to a cobordism
$f \maps T \to T'$ between two tangles one can assign a map $Kh(f) \maps Kh(T) \to Kh(T')$ between the respective homologies well defined only up to a $\pm1$ sign~\cite{Kh6,Kh2,BN2,Jac}.

Clark, Morrison and Walker~\cite{CMW}, and independently Caprau~\cite{Cap3,Cap4}, showed that the functoriality of Khovanov homology could be fixed by considering modified foam categories.  From the representation-theoretic point of view, these foam categories keep track of the fact that the defining representation of $U_q(\mf{sl}_2)$ is non-canonically isomorphic to its dual.  Keeping track of this information gives rise to a fully functorial tangle invariant.  For both of these fixes to Khovanov homology one must work with foams defined over the Gaussian integers $\Z[i]$.

Christian Blanchet proposed yet another construction fixing the functoriality of Khovanov homology~\cite{Blan}.  He works with an enhanced version of the foam category where one labels facets by elements of the set $\{1,2\}$.  The 2-labeled facets are the primary difference from the previous two constructions.  The presence of these 2-labeled facets introduces additional signs that are not present in the CMW or Caprau approaches to functoriality.   Blanchet's approach gives rise to a functorial version of Khovanov homology defined over the
integers~\cite{Blan}.

These modified foam categories are quite natural from the representation theoretic viewpoint.  In the skew Howe framework, foams naturally provide a representation of $\Ucat_Q(\mf{gl}_n)$.  Seen from this perspective, the foams introduced by Blanchet keep track of the difference between the trivial representation $\Alt{q}{0}(\C^2_q)$ and the determinant representation $\Alt{q}{2}(\C^2_q)$.  As $U_q(\mf{sl}_2)$ representations there is of course no difference between these two representations, but it appears that Blanchet's approach has additional information that contributes additional signs coming from the 2-labeled facets corresponding to the determinant representation $\Alt{q}{2}(\C^2_q)$.

In this article, we construct foamation functors into both the CMW foam categories as well as the foam categories of Blanchet.  To define the functors into the CMW foam categories one must continue working  with
complex coefficients, while Blanchet's foam categories naturally admit foamation functors defined over
the integers. This suggests that Blanchet's approach is the most natural from the perspective of categorified representation theory.  It is also interesting to note that the $\mf{sl}_2$ knot homology most closely related to categorified quantum groups is integral and functorial.

It turns out that in the $n=3$ case it is possible to modify Mackaay's definition of the foamation functors to work over $\Z$, although this requires rather complicated and unnatural sign assignments.  Motivated by the $\mf{sl}_2$ case, we consider a modified $\mf{sl}_3$ foam category that incorporates additional 3-labeled facets.  To distinguish these foams from the usual $\mf{sl}_3$ foams we call them Blanchet $\mf{sl}_3$ foams. We show that there are 2-functors into Blanchet $\mf{sl}_3$ foams with much more natural sign assignments for the generating 2-morphisms in $\Ucat_Q(\slm)$.  There is also a natural construction of a forgetful 2-functor into the usual $\mf{sl}_3$ foams defined intrinsically in terms of the topology of the Blanchet foams.  Taking the composite of these 2-functors provides an explanation for the complicated signs occurring in the standard $\mf{sl}_3$ foamation functors.

Checking the relations for the 2-category $\Ucat_Q(\slm)$ needed to define foamation functors is a laborious task.  Here we utilize recent results of the first author with Cautis showing that in a 2-representation with finitely many nonzero weight spaces many of the relations come for free~\cite{CLau}.

An independent construction of the integral foamation functors into the usual $\mf{sl}_3$ foam $2$-category was given by Mackaay, Pan, and Tubbenhauer in a recent update to their work in~\cite{MPT}. They utilize the foamation functors for a different application related to a generalization of Khovanov's arc algebra to the $\mf{sl}_3$ setting.

\subsection{Comparing knot homologies}

A careful analysis of Cautis' arguments in \cite{Cautis} reveals that the skew Howe duality approach also supplies a mechanism for equating different constructions of $\mf{sl}_n$ link homologies.  Indeed, given any 2-representation of $\UcatD_Q(\slm)$ whose
objects are indexed by the nonzero weights in $\bigwedge^{N}_q(\C^n_q \otimes \C^m_q)$, and whose endomorphisms of the highest weight object are one dimensional in degree zero and zero dimensional otherwise, one obtains a unique knot homology theory that is formally determined by the relations imposed by the 2-representation.  In Section~\ref{sec_compare}, we show that Khovanov's $\mf{sl}_2$ and $\mf{sl}_3$ link homology theories fit into this framework. We also sketch a proof of the $\mf{sl}_3$ case of Conjecture~$6.4$
from \cite{CK02} relating Khovanov-Rozansky link homology to the geometrically defined Cautis-Kamnitzer
link homology, contingent on results to appear in \cite{Ca}.

\subsection{Cautis-Rozansky categorified clasps}

Categorifying $\sln$ link invariants labeled by arbitrary (non-fundamental) representations appears to be a much more difficult problem~\cite{Web2}.  In the $n=2$ case, there are several approaches to defining categorifications of the coloured Jones polynomial by categorifying Jones-Wenzl projectors.  The approach of Cooper-Krushkal uses foam based methods~\cite{CoopKrush}, while another approach of Frenkel, Stroppel, and Sussan uses Lie theoretic methods~\cite{FSS} based on category $ \cal{O}$ for $\mf{gl}_n$. These two approaches are compared and related via Koszul duality in~\cite{SS}.  Rozansky defined yet another approach to categorifying Jones-Wenzl projectors using complexes in Bar-Natan's foam category~\cite{Roz}.    These complexes are presented as the stable limit of the complexes assigned to $k$-twist torus braids as $k \to \infty$, or ``infinite twists''.  This construction also agrees with the Cooper-Krushkal $\mf{sl}_2$ projectors.

There are analogs of Jones-Wenzl projectors for $\sln$.  Given a tensor product of fundamental $U_q(\sln)$ representations, there is a corresponding idempotent
\[
P \maps V_{\omega_{i_1}} \otimes V_{\omega_{i_2}} \otimes \dots \otimes V_{\omega_{i_m}} \to V_{\sum_k i_k},
\]
called a {\em clasp} following Kuperberg's terminology from the $\mf{sl}_3$ case.  For $n=3$ these clasps were categorified by the third author using an $\mf{sl}_3$ foam based construction and a generalization of Rozansky's infinite twist approach to projectors~\cite{Rose}.

A related, but more general, approach using infinite twists was independently considered by Cautis who showed that $\sln$ clasps can be categorified explicitly using the higher structure of categorified quantum groups~\cite{Cautis}.  His approach utilizes an infinite twist construction together with the categorified braid group action described above. Given a reduced decomposition of $w=s_{i_1} \dots s_{i_k}$ of the longest braid word $w$ in the Weyl group for $\slm$, Cautis defines a complex $\cal{T}_{w} \onel:= \cal{T}_{i_1} \dots \cal{T}_{i_k}\onel$ in $Kom(\UcatD_Q(\slm))$.
He shows that the infinite twist $\lim_{\ell \to \infty}\cal{T}_{w}^{2\ell} \onel$ converges and categorifies the clasp $P$ in any appropriate $2$-representation.

Cautis' categorified clasps are formulated explicitly using the 2-morphisms in $\UcatD_Q(\slm)$.  This is
advantageous in that it allows for explicit computations not accessible within Webster's formalism.  Given appropriate families of 2-representations of $\UcatD_Q(\slm)$ with nonzero weight spaces matching the vector space $\Alt{q}{N}(\C^n_q\otimes \C^m_q)$ where $N$ and $m$ vary,  Cautis' framework gives rise to $\sln$ knot homology theories and categorifications of $\sln$ clasps.  Cautis describes such 2-representations using derived categories of coherent sheaves.
In section~\ref{SectionKnotHom} we show that foamation functors allow Cautis' categorified clasps to be utilized in the foam setting.  In the $\mf{sl}_2$ case this gives categorified projectors which can be viewed as an extension of the Cooper-Krushkal and Rozansky projectors to the functorial foam categories of Clark-Morrison-Walker and Blanchet.  In the $\mf{sl}_3$ case the resulting projectors agree with those from~\cite{Rose}.

\subsection{Recovering relations from categorified quantum groups}

Foams can be thought of as a categorification of webs.  This perspective suggests that new insights into foam
categories can be achieved through categorical skew Howe duality. In~\cite{CKM}, the authors use skew Howe
duality to deduce the $\sln$ web relations. In section~\ref{sec_deriving_rels}, we show that this holds at the
categorified level as well, namely that relations for $\mf{sl}_2$ and $\mf{sl}_3$ foams can be deduced
from the categorified quantum group.

This suggests that one may gain further insight to $\sln$ foams for $n\geq 4$ using categorical skew Howe
duality. In a follow-up paper, we will give a foam-based construction of $\sln$ link homologies for $n\geq 4$
which avoids the use of the Kapustin-Li formula \cite{LQR2}.

Note that the relations we derive use graded parameters that are usually set to zero in the literature. These relations are similar to the ones of \cite{MV2} in the $\mathfrak{sl}_3$ case, but in the $\mathfrak{sl}_2$ case we obtain relations that slightly extend both Blanchet's \cite{Blan} and Clark-Morrison-Walker \cite{CMW} models.

\medskip

\noindent {\bf Acknowledgments:}
The authors would like to thank Christian Blanchet, Sabin Cautis, Joel Kamnitzer and Mikhail Khovanov for helpful discussions. A.L. was supported by a Zumberge Fellowship and the Alfred P. Sloan foundation.

%
\section{Categorified quantum groups}
%


In this section we recall the relevant background information on categorified quantum groups and
higher representation theory.

%
\subsection{The 2-category $\UcatD_Q(\mathfrak{sl}_n)$}
%

Fix a base field $\Bbbk$. We will always work over this field which is not assumed to be of characteristic 0, nor algebraically closed.

%
\subsubsection{The Cartan datum } \label{sec:datum}
%

Let $I=\{1,2,\dots,m-1\}$ consist of the set of vertices of the Dynkin diagram of type $A_{m-1}$
\begin{equation} \label{eq_dynkin_sln}
    \xy
  (-15,0)*{\circ}="1";
  (-5, 0)*{\circ}="2";
  (5,  0)*{\circ}="3";
  (35,  0)*{\circ}="4";
  "1";"2" **\dir{-}?(.55)*\dir{};
  "2";"3" **\dir{-}?(.55)*\dir{};
  "3";(15,0) **\dir{-}?(.55)*\dir{};
  (25,0);"4" **\dir{-}?(.55)*\dir{};
  (-15,2.2)*{\scs 1};
  (-5,2.2)*{\scs 2};
  (5,2.2)*{\scs 3};
  (35,2.2)*{\scs m-1};
  (20,0)*{\cdots };
  \endxy
  \nn
\end{equation}
enumerated from left to right. Let $X=\Z^{m-1}$ denote the weight lattice for $\mathfrak{sl}_m$ and
$\{\alpha_i\}_{i\ \in I} \subset X$ and $\{\Lambda_i\}_{i \in I} \subset X$ denote the collection of simple roots
and fundamental weights, respectively.
There is a symmetric bilinear form on $X$ defined by $(\alpha_i, \alpha_j) = a_{ij}$ where
\[ a_{ij} =
\left\{
\begin{array}{ll}
  2 & \text{if $i=j$}\\
  -1&  \text{if $|i-j|=1$} \\
  0 & \text{if $|i-j|>1$}
\end{array}
\right.
\]
is the (symmetric) Cartan matrix associated to $\mathfrak{sl}_m$. For $i \in I$ denote the simple coroots by
$h_i \in X^\vee = \Hom_{\Z}(X,\Z)$.  Write $\langle \cdot, \cdot \rangle \maps X^{\vee} \times X
\to \Z$ for the canonical pairing
$\la i,\lambda\ra :=\langle h_i, \lambda \rangle = 2 \frac{(\alpha_i,\lambda)}{(\alpha_i,\alpha_i)}$
for $i \in I$ and $\lambda \in X$ that satisfies $\la h_i, \Lambda_i \ra = \delta_{i,j}$. Any weight $\lambda \in X$
can be written as $\lambda = (\lambda_1,\lambda_2, \dots, \lambda_{m-1})$, where
$\lambda_i=\la h_i, \lambda \ra$.

We let $X^+ \subset X$ denote the dominant weights, which are those of the form
$\sum_i \lambda_i \Lambda_i$
with $\lambda_i \ge 0$.  Finally, let $[n]=\frac{q^n-q^{-n}}{q-q^{-1}}$ and $[n]!=[n][n-1]\dots [1]$.

%
\subsubsection{The algebra $\mathbf{U}_q(\mathfrak{sl}_m)$}
%

The algebra $\mathbf{U}_q(\mathfrak{sl}_m)$ is the $\Q(q)$-algebra with unit generated by the elements $E_i$,
$F_i$ and $K_i^{\pm 1}$ for $i = 1, 2, \dots , m-1$, with  the defining relations
\begin{equation}
K_iK_i^{-1} = K_i^{-1}K_i = 1, \quad  K_iK_j = K_jK_i,
\end{equation}
\begin{equation}
K_iE_jK_i^{-1} = q^{a_{ij}} E_j, \quad  K_iF_jK_i^{-1} = q^{-a_{ij}} F_j,
\end{equation}
\begin{equation}
E_iF_j - F_jE_i = \delta_{ij} \frac{K_i-K_{i}^{-1}}{q-q^{-1}},
\end{equation}
\begin{equation}
E_i^2E_j-(q+q^{-1})E_iE_jE_i+E_jE_i^2 =0\;\; \text{if $j=i\pm 1$},
\end{equation}\begin{equation}
F_i^2F_j-(q+q^{-1})F_iF_jF_i+F_jF_i^2=0 \;\; \text{if $j=i\pm 1$},
\end{equation}\begin{equation}
E_iE_j=E_jE_i, \quad F_iF_j = F_jF_i \;\; \text{if $|i-j|>1$.}
\end{equation}

Recall that $\U(\mathfrak{sl}_m)$ is the modified version of $\mathbf{U}_q(\mathfrak{sl}_m)$
where the unit is replaced by a collection of orthogonal idempotents $1_{\lambda}$ indexed by the weight lattice $X$ of $\mathfrak{sl}_m$,
\begin{equation}
  1_{\lambda}1_{\lambda'} = \delta_{\lambda\lambda'} 1_{\lambda},
\end{equation}
such that if $\lambda = (\lambda_1,\lambda_2, \dots, \lambda_{m-1})$, then
\begin{equation} \label{eq_onesubn}
K_i1_{\lambda} =1_{\lambda}K_i= q^{\lambda_i} 1_{\lambda}, \quad
E_i^{}1_{\lambda} = 1_{\lambda+\alpha_i}E_i, \quad F_i1_{\lambda} = 1_{\lambda-\alpha_i}F_i
,
\end{equation}
where
\begin{eqnarray} \label{eq_weight_action1}
 \lambda +\alpha_i &=& \quad \left\{
\begin{array}{ccl}
   (\lambda_1+2, \lambda_2-1,\lambda_3,\dots,\lambda_{m-2}, \lambda_{m-1}) & \quad & \text{if $i=1$} \\
   (\lambda_1, \lambda_2,\dots,\lambda_{m-2},\lambda_{m-1}-1, \lambda_{m-1}+2) & \quad & \text{if $i=m-1$} \\
  (\lambda_1, \dots, \lambda_{i-1}-1, \lambda_i+2, \lambda_{i+1}-1, \dots,
\lambda_{m-1}) & \quad & \text{otherwise.}
\end{array}
 \right.
\end{eqnarray}

Let $\cal{A} := \Z[q,q^{-1}]$; the $\cal{A}$-algebra $\UA(\mathfrak{sl}_m)$ is the integral form of
$\U(\mathfrak{sl}_m)$ generated by products of divided powers $E^{(a)}_i1_{\lambda}:=
\frac{E^{a}_i}{[a]!}1_{\lambda}$, $F^{(a)}_i1_{\lambda}:=
\frac{F^{a}_i}{[a]!}1_{\lambda}$ for $\lambda \in X$ and $i = 1, 2, \dots , m-1$.

%
\subsubsection{Choice of scalars $Q$}
%

Associated to the Cartan datum for $\mathfrak{sl}_m$ we also fix a choice of scalars $Q$ consisting of:
\begin{itemize}
  \item $t_{ij}$ for all $i,j \in I$,
\end{itemize}
such that
\begin{itemize}
\item $t_{ii}=1$ for all $i \in I$ and $t_{ij} \in \Bbbk^{\times}$ for $i\neq j$,
 \item $t_{ij}=t_{ji}$ when $a_{ij}=0$.
\end{itemize}

%
\subsubsection{The definition}
%

We now recall the general version of the 2-category categorifying $\U(\mf{sl}_m)$ given in \cite{CLau}.  There a 2-category $\Ucat_Q(\mf{g})$ was defined associated to any root datum and choice of scalars $Q$.  This 2-category is a modest generalization of the 2-category originally defined in \cite{KL3} for the choice of scalars $Q$ where all $t_{ij}=1$.  It follows from \cite[pg. 15]{KL2} and \cite{KL3,KL3p} that $\cal{U}_Q(\mf{sl}_m)$
is independent of the choice of scalars $Q$ up to isomorphism.
Here we present the general definition; in later sections we will choose a convenient choice of scalars.

\begin{defn} \label{defU_cat}
The 2-category $\Ucat_Q(\mathfrak{sl}_m)$ is the graded additive $\Bbbk$-linear 2-category consisting of:
\begin{itemize}
\item objects $\lambda$ for $\lambda \in X$.
\item 1-morphisms are formal direct sums of (shifts of) compositions of
$$\onel, \quad \onenn{\l+\alpha_i} \sE_i = \onenn{\l+\alpha_i} \sE_i\onel = \sE_i \onel, \quad \text{ and }\quad \onenn{\lambda-\alpha_i} \sF_i = \onenn{\lambda-\alpha_i} \sF_i\onel = \sF_i\onel$$
for $i \in I$ and $\l \in X$.
\item 2-morphisms are $\Bbbk$-vector spaces spanned by compositions of (decorated) tangle-like diagrams illustrated below.
\begin{align}
  \xy 0;/r.17pc/:
 (0,7);(0,-7); **\dir{-} ?(.75)*\dir{>};
 (0,0)*{\bullet};
 (7,3)*{ \scs \lambda};
 (-9,3)*{\scs  \lambda+\alpha_i};
 (-2.5,-6)*{\scs i};
 (-10,0)*{};(10,0)*{};
 \endxy &\maps \cal{E}_i\onel \to \cal{E}_i\onel\{ (\alpha_i,\alpha_i) \}  & \quad
 &
    \xy 0;/r.17pc/:
 (0,7);(0,-7); **\dir{-} ?(.75)*\dir{<};
 (0,0)*{\bullet};
 (7,3)*{ \scs \lambda};
 (-9,3)*{\scs  \lambda-\alpha_i};
 (-2.5,-6)*{\scs i};
 (-10,0)*{};(10,0)*{};
 \endxy\maps \cal{F}_i\onel \to \cal{F}_i\onel\{ (\alpha_i,\alpha_i) \}  \nn \\
   & & & \nn \\
   \xy 0;/r.17pc/:
  (0,0)*{\xybox{
    (-4,-4)*{};(4,4)*{} **\crv{(-4,-1) & (4,1)}?(1)*\dir{>} ;
    (4,-4)*{};(-4,4)*{} **\crv{(4,-1) & (-4,1)}?(1)*\dir{>};
    (-5.5,-3)*{\scs i};
     (5.5,-3)*{\scs j};
     (9,1)*{\scs  \lambda};
     (-10,0)*{};(10,0)*{};
     }};
  \endxy \;\;&\maps \cal{E}_i\cal{E}_j\onel  \to \cal{E}_j\cal{E}_i\onel\{ - (\alpha_i,\alpha_j) \}  &
  &
   \xy 0;/r.17pc/:
  (0,0)*{\xybox{
    (-4,4)*{};(4,-4)*{} **\crv{(-4,1) & (4,-1)}?(1)*\dir{>} ;
    (4,4)*{};(-4,-4)*{} **\crv{(4,1) & (-4,-1)}?(1)*\dir{>};
    (-6.5,-3)*{\scs i};
     (6.5,-3)*{\scs j};
     (9,1)*{\scs  \lambda};
     (-10,0)*{};(10,0)*{};
     }};
  \endxy\;\; \maps \cal{F}_i\cal{F}_j\onel  \to \cal{F}_j\cal{F}_i\onel\{ - (\alpha_i,\alpha_j) \}  \nn \\
  & & & \nn \\
     \xy 0;/r.17pc/:
    (0,-3)*{\bbpef{i}};
    (8,-5)*{\scs  \lambda};
    (-10,0)*{};(10,0)*{};
    \endxy &\maps \onel  \to \cal{F}_i\cal{E}_i\onel\{ 1 + (\l, \alpha_i) \}   &
    &
   \xy 0;/r.17pc/:
    (0,-3)*{\bbpfe{i}};
    (8,-5)*{\scs \lambda};
    (-10,0)*{};(10,0)*{};
    \endxy \maps \onel  \to\cal{E}_i\cal{F}_i\onel\{ 1 - (\l, \alpha_i) \}  \nn \\
      & & & \nn \\
  \xy 0;/r.17pc/:
    (0,0)*{\bbcef{i}};
    (8,4)*{\scs  \lambda};
    (-10,0)*{};(10,0)*{};
    \endxy & \maps \cal{F}_i\cal{E}_i\onel \to\onel\{ 1 + (\l, \alpha_i) \}  &
    &
 \xy 0;/r.17pc/:
    (0,0)*{\bbcfe{i}};
    (8,4)*{\scs  \lambda};
    (-10,0)*{};(10,0)*{};
    \endxy \maps\cal{E}_i\cal{F}_i\onel  \to\onel\{ 1 - (\l, \alpha_i) \} \nn
\end{align}
\end{itemize}
\end{defn}
Here we follow the grading conventions in \cite{CLau}
which are opposite to those from \cite{KL3} but line up nicely
with the gradings on foams used later in the paper.
In this $2$-category (and those throughout the paper) we
read diagrams from right to left and bottom to top.  The identity 2-morphism of the 1-morphism
$\cal{E}_i \onel$ is
represented by an upward oriented line labeled by $i$ and the identity 2-morphism of $\cal{F}_i \onel$ is
represented by a downward such line.

The 2-morphisms satisfy the following relations:
\begin{enumerate}
\item \label{item_cycbiadjoint} The 1-morphisms $\cal{E}_i \onel$ and $\cal{F}_i \onel$ are biadjoint (up to a specified degree shift). These conditions are expressed diagrammatically as
    \begin{equation} \label{eq_biadjoint1}
 \xy   0;/r.17pc/:
    (-8,0)*{}="1";
    (0,0)*{}="2";
    (8,0)*{}="3";
    (-8,-10);"1" **\dir{-};
    "1";"2" **\crv{(-8,8) & (0,8)} ?(0)*\dir{>} ?(1)*\dir{>};
    "2";"3" **\crv{(0,-8) & (8,-8)}?(1)*\dir{>};
    "3"; (8,10) **\dir{-};
    (12,-9)*{\lambda};
    (-6,9)*{\lambda+\alpha_i};
    \endxy
    \; =
    \;
\xy   0;/r.17pc/:
    (-8,0)*{}="1";
    (0,0)*{}="2";
    (8,0)*{}="3";
    (0,-10);(0,10)**\dir{-} ?(.5)*\dir{>};
    (5,8)*{\lambda};
    (-9,8)*{\lambda+\alpha_i};
    \endxy
\qquad \quad \xy  0;/r.17pc/:
    (8,0)*{}="1";
    (0,0)*{}="2";
    (-8,0)*{}="3";
    (8,-10);"1" **\dir{-};
    "1";"2" **\crv{(8,8) & (0,8)} ?(0)*\dir{<} ?(1)*\dir{<};
    "2";"3" **\crv{(0,-8) & (-8,-8)}?(1)*\dir{<};
    "3"; (-8,10) **\dir{-};
    (12,9)*{\lambda+\alpha_i};
    (-6,-9)*{\lambda};
    \endxy
    \; =
    \;
\xy  0;/r.17pc/:
    (8,0)*{}="1";
    (0,0)*{}="2";
    (-8,0)*{}="3";
    (0,-10);(0,10)**\dir{-} ?(.5)*\dir{<};
    (9,-8)*{\lambda+\alpha_i};
    (-6,-8)*{\lambda};
    \endxy
\end{equation}

\begin{equation}\label{eq_biadjoint2}
 \xy   0;/r.17pc/:
    (8,0)*{}="1";
    (0,0)*{}="2";
    (-8,0)*{}="3";
    (8,-10);"1" **\dir{-};
    "1";"2" **\crv{(8,8) & (0,8)} ?(0)*\dir{>} ?(1)*\dir{>};
    "2";"3" **\crv{(0,-8) & (-8,-8)}?(1)*\dir{>};
    "3"; (-8,10) **\dir{-};
    (12,9)*{\lambda};
    (-5,-9)*{\lambda+\alpha_i};
    \endxy
    \; =
    \;
    \xy 0;/r.17pc/:
    (8,0)*{}="1";
    (0,0)*{}="2";
    (-8,0)*{}="3";
    (0,-10);(0,10)**\dir{-} ?(.5)*\dir{>};
    (5,-8)*{\lambda};
    (-9,-8)*{\lambda+\alpha_i};
    \endxy
\qquad \quad \xy   0;/r.17pc/:
    (-8,0)*{}="1";
    (0,0)*{}="2";
    (8,0)*{}="3";
    (-8,-10);"1" **\dir{-};
    "1";"2" **\crv{(-8,8) & (0,8)} ?(0)*\dir{<} ?(1)*\dir{<};
    "2";"3" **\crv{(0,-8) & (8,-8)}?(1)*\dir{<};
    "3"; (8,10) **\dir{-};
    (12,-9)*{\lambda+\alpha_i};
    (-6,9)*{\lambda};
    \endxy
    \; =
    \;
\xy   0;/r.17pc/:
    (-8,0)*{}="1";
    (0,0)*{}="2";
    (8,0)*{}="3";
    (0,-10);(0,10)**\dir{-} ?(.5)*\dir{<};
   (9,8)*{\lambda+\alpha_i};
    (-6,8)*{\lambda};
    \endxy
\end{equation}

  \item The 2-morphisms are $Q$-cyclic with respect to this biadjoint structure.
\begin{equation} \label{eq_cyclic_dot}
 \xy 0;/r.17pc/:
    (-8,5)*{}="1";
    (0,5)*{}="2";
    (0,-5)*{}="2'";
    (8,-5)*{}="3";
    (-8,-10);"1" **\dir{-};
    "2";"2'" **\dir{-} ?(.5)*\dir{<};
    "1";"2" **\crv{(-8,12) & (0,12)} ?(0)*\dir{<};
    "2'";"3" **\crv{(0,-12) & (8,-12)}?(1)*\dir{<};
    "3"; (8,10) **\dir{-};
    (17,-9)*{\lambda+\alpha_i};
    (-12,9)*{\lambda};
    (0,4)*{\bullet};
    (10,8)*{\scs };
    (-10,-8)*{\scs };
    \endxy
    \quad = \quad
      \xy 0;/r.17pc/:
 (0,10);(0,-10); **\dir{-} ?(.75)*\dir{<}+(2.3,0)*{\scriptstyle{}}
 ?(.1)*\dir{ }+(2,0)*{\scs };
 (0,0)*{\bullet};
 (-6,5)*{\lambda};
 (10,5)*{\lambda+\alpha_i};
 (-10,0)*{};(10,0)*{};(-2,-8)*{\scs };
 \endxy
    \quad = \quad
   \xy 0;/r.17pc/:
    (8,5)*{}="1";
    (0,5)*{}="2";
    (0,-5)*{}="2'";
    (-8,-5)*{}="3";
    (8,-10);"1" **\dir{-};
    "2";"2'" **\dir{-} ?(.5)*\dir{<};
    "1";"2" **\crv{(8,12) & (0,12)} ?(0)*\dir{<};
    "2'";"3" **\crv{(0,-12) & (-8,-12)}?(1)*\dir{<};
    "3"; (-8,10) **\dir{-};
    (17,9)*{\lambda+\alpha_i};
    (-12,-9)*{\lambda};
    (0,4)*{\bullet};
    (-10,8)*{\scs };
    (10,-8)*{\scs };
    \endxy
\end{equation}
The $Q$-cyclic relations for crossings are given by
\begin{equation} \label{eq_almost_cyclic}
   \xy 0;/r.17pc/:
  (0,0)*{\xybox{
    (-4,4)*{};(4,-4)*{} **\crv{(-4,1) & (4,-1)}?(1)*\dir{>} ;
    (4,4)*{};(-4,-4)*{} **\crv{(4,1) & (-4,-1)}?(1)*\dir{>};
    (-6.5,-3)*{\scs i};
     (6.5,-3)*{\scs j};
     (9,1)*{\scs  \lambda};
     (-10,0)*{};(10,0)*{};
     }};
  \endxy \quad = \quad
  t_{ij}^{-1}\xy 0;/r.17pc/:
  (0,0)*{\xybox{
    (4,-4)*{};(-4,4)*{} **\crv{(4,-1) & (-4,1)}?(1)*\dir{>};
    (-4,-4)*{};(4,4)*{} **\crv{(-4,-1) & (4,1)};
     (-4,4)*{};(18,4)*{} **\crv{(-4,16) & (18,16)} ?(1)*\dir{>};
     (4,-4)*{};(-18,-4)*{} **\crv{(4,-16) & (-18,-16)} ?(1)*\dir{<}?(0)*\dir{<};
     (-18,-4);(-18,12) **\dir{-};(-12,-4);(-12,12) **\dir{-};
     (18,4);(18,-12) **\dir{-};(12,4);(12,-12) **\dir{-};
     (8,1)*{ \lambda};
     (-10,0)*{};(10,0)*{};
     (-4,-4)*{};(-12,-4)*{} **\crv{(-4,-10) & (-12,-10)}?(1)*\dir{<}?(0)*\dir{<};
      (4,4)*{};(12,4)*{} **\crv{(4,10) & (12,10)}?(1)*\dir{>}?(0)*\dir{>};
      (-20,11)*{\scs j};(-10,11)*{\scs i};
      (20,-11)*{\scs j};(10,-11)*{\scs i};
     }};
  \endxy
\quad =  \quad t_{ji}^{-1}
\xy 0;/r.17pc/:
  (0,0)*{\xybox{
    (-4,-4)*{};(4,4)*{} **\crv{(-4,-1) & (4,1)}?(1)*\dir{>};
    (4,-4)*{};(-4,4)*{} **\crv{(4,-1) & (-4,1)};
     (4,4)*{};(-18,4)*{} **\crv{(4,16) & (-18,16)} ?(1)*\dir{>};
     (-4,-4)*{};(18,-4)*{} **\crv{(-4,-16) & (18,-16)} ?(1)*\dir{<}?(0)*\dir{<};
     (18,-4);(18,12) **\dir{-};(12,-4);(12,12) **\dir{-};
     (-18,4);(-18,-12) **\dir{-};(-12,4);(-12,-12) **\dir{-};
     (8,1)*{ \lambda};
     (-10,0)*{};(10,0)*{};
      (4,-4)*{};(12,-4)*{} **\crv{(4,-10) & (12,-10)}?(1)*\dir{<}?(0)*\dir{<};
      (-4,4)*{};(-12,4)*{} **\crv{(-4,10) & (-12,10)}?(1)*\dir{>}?(0)*\dir{>};
      (20,11)*{\scs i};(10,11)*{\scs j};
      (-20,-11)*{\scs i};(-10,-11)*{\scs j};
     }};
  \endxy
\end{equation}

The $Q$-cyclic condition for sideways crossings is given by the equalities:
\begin{equation} \label{eq_crossl-gen}
  \xy 0;/r.18pc/:
  (0,0)*{\xybox{
    (-4,-4)*{};(4,4)*{} **\crv{(-4,-1) & (4,1)}?(1)*\dir{>} ;
    (4,-4)*{};(-4,4)*{} **\crv{(4,-1) & (-4,1)}?(0)*\dir{<};
    (-5,-3)*{\scs j};
     (6.5,-3)*{\scs i};
     (9,2)*{ \lambda};
     (-12,0)*{};(12,0)*{};
     }};
  \endxy
\quad = \quad
 \xy 0;/r.17pc/:
  (0,0)*{\xybox{
    (4,-4)*{};(-4,4)*{} **\crv{(4,-1) & (-4,1)}?(1)*\dir{>};
    (-4,-4)*{};(4,4)*{} **\crv{(-4,-1) & (4,1)};
     (-4,4);(-4,12) **\dir{-};
     (-12,-4);(-12,12) **\dir{-};
     (4,-4);(4,-12) **\dir{-};(12,4);(12,-12) **\dir{-};
     (16,1)*{\lambda};
     (-10,0)*{};(10,0)*{};
     (-4,-4)*{};(-12,-4)*{} **\crv{(-4,-10) & (-12,-10)}?(1)*\dir{<}?(0)*\dir{<};
      (4,4)*{};(12,4)*{} **\crv{(4,10) & (12,10)}?(1)*\dir{>}?(0)*\dir{>};
      (-14,11)*{\scs i};(-2,11)*{\scs j};
      (14,-11)*{\scs i};(2,-11)*{\scs j};
     }};
  \endxy
  \quad = \quad t_{ij} \;\;
 \xy 0;/r.17pc/:
  (0,0)*{\xybox{
    (-4,-4)*{};(4,4)*{} **\crv{(-4,-1) & (4,1)}?(1)*\dir{<};
    (4,-4)*{};(-4,4)*{} **\crv{(4,-1) & (-4,1)};
     (4,4);(4,12) **\dir{-};
     (12,-4);(12,12) **\dir{-};
     (-4,-4);(-4,-12) **\dir{-};(-12,4);(-12,-12) **\dir{-};
     (16,1)*{\lambda};
     (10,0)*{};(-10,0)*{};
     (4,-4)*{};(12,-4)*{} **\crv{(4,-10) & (12,-10)}?(1)*\dir{>}?(0)*\dir{>};
      (-4,4)*{};(-12,4)*{} **\crv{(-4,10) & (-12,10)}?(1)*\dir{<}?(0)*\dir{<};
     }};
     (12,11)*{\scs j};(0,11)*{\scs i};
      (-17,-11)*{\scs j};(-5,-11)*{\scs i};
  \endxy
\end{equation}
\begin{equation} \label{eq_crossr-gen}
  \xy 0;/r.18pc/:
  (0,0)*{\xybox{
    (-4,-4)*{};(4,4)*{} **\crv{(-4,-1) & (4,1)}?(0)*\dir{<} ;
    (4,-4)*{};(-4,4)*{} **\crv{(4,-1) & (-4,1)}?(1)*\dir{>};
    (5.1,-3)*{\scs i};
     (-6.5,-3)*{\scs j};
     (9,2)*{ \lambda};
     (-12,0)*{};(12,0)*{};
     }};
  \endxy
\quad = \quad
 \xy 0;/r.17pc/:
  (0,0)*{\xybox{
    (-4,-4)*{};(4,4)*{} **\crv{(-4,-1) & (4,1)}?(1)*\dir{>};
    (4,-4)*{};(-4,4)*{} **\crv{(4,-1) & (-4,1)};
     (4,4);(4,12) **\dir{-};
     (12,-4);(12,12) **\dir{-};
     (-4,-4);(-4,-12) **\dir{-};(-12,4);(-12,-12) **\dir{-};
     (16,-6)*{\lambda};
     (10,0)*{};(-10,0)*{};
     (4,-4)*{};(12,-4)*{} **\crv{(4,-10) & (12,-10)}?(1)*\dir{<}?(0)*\dir{<};
      (-4,4)*{};(-12,4)*{} **\crv{(-4,10) & (-12,10)}?(1)*\dir{>}?(0)*\dir{>};
      (14,11)*{\scs j};(2,11)*{\scs i};
      (-14,-11)*{\scs j};(-2,-11)*{\scs i};
     }};
  \endxy
  \quad = \quad t_{ji} \;\;
  \xy 0;/r.17pc/:
  (0,0)*{\xybox{
    (4,-4)*{};(-4,4)*{} **\crv{(4,-1) & (-4,1)}?(1)*\dir{<};
    (-4,-4)*{};(4,4)*{} **\crv{(-4,-1) & (4,1)};
     (-4,4);(-4,12) **\dir{-};
     (-12,-4);(-12,12) **\dir{-};
     (4,-4);(4,-12) **\dir{-};(12,4);(12,-12) **\dir{-};
     (16,6)*{\lambda};
     (-10,0)*{};(10,0)*{};
     (-4,-4)*{};(-12,-4)*{} **\crv{(-4,-10) & (-12,-10)}?(1)*\dir{>}?(0)*\dir{>};
      (4,4)*{};(12,4)*{} **\crv{(4,10) & (12,10)}?(1)*\dir{<}?(0)*\dir{<};
      (-14,11)*{\scs i};(-2,11)*{\scs j};(14,-11)*{\scs i};(2,-11)*{\scs j};
     }};
  \endxy
\end{equation}
where the second equality in \eqref{eq_crossl-gen} and \eqref{eq_crossr-gen}
follow from \eqref{eq_almost_cyclic}.

\item The $\cal{E}$'s carry an action of the KLR algebra associated to $Q$. The KLR algebra $R=R_Q$ associated to $Q$ is defined by finite $\Bbbk$-linear combinations of braid--like diagrams in the plane, where each strand is labeled by a vertex $i \in I$.  Strands can intersect and can carry dots but triple intersections are not allowed.  Diagrams are considered up to planar isotopy that do not change the combinatorial type of the diagram. We recall the local relations:
\begin{enumerate}[i)]
\item
If all strands are labeled by the same $i \in I$ then the  NilHecke algebra axioms hold
 \begin{equation}
\vcenter{
\xy 0;/r.17pc/:
	(-4,-4)*{};(4,4)*{} **\crv{(-4,-1) & (4,1)}?(1)*\dir{};
	(4,-4)*{};(-4,4)*{} **\crv{(4,-1) & (-4,1)}?(1)*\dir{};
	(-4,4)*{};(4,12)*{} **\crv{(-4,7) & (4,9)}?(1)*\dir{};
	(4,4)*{};(-4,12)*{} **\crv{(4,7) & (-4,9)}?(1)*\dir{};
	(-4,12); (-4,13) **\dir{-}?(1)*\dir{>};
	(4,12); (4,13) **\dir{-}?(1)*\dir{>};
	(9,8)*{\lambda};
\endxy}
 \;\; =\;\; 0, \qquad \quad
\vcenter{\xy 0;/r.17pc/:
    (-4,-4)*{};(4,4)*{} **\crv{(-4,-1) & (4,1)}?(1)*\dir{};
    (4,-4)*{};(-4,4)*{} **\crv{(4,-1) & (-4,1)}?(1)*\dir{};
    (4,4)*{};(12,12)*{} **\crv{(4,7) & (12,9)}?(1)*\dir{};
    (12,4)*{};(4,12)*{} **\crv{(12,7) & (4,9)}?(1)*\dir{};
    (-4,12)*{};(4,20)*{} **\crv{(-4,15) & (4,17)}?(1)*\dir{};
    (4,12)*{};(-4,20)*{} **\crv{(4,15) & (-4,17)}?(1)*\dir{};
    (-4,4)*{}; (-4,12) **\dir{-};
    (12,-4)*{}; (12,4) **\dir{-};
    (12,12)*{}; (12,20) **\dir{-};
    (4,20); (4,21) **\dir{-}?(1)*\dir{>};
    (-4,20); (-4,21) **\dir{-}?(1)*\dir{>};
    (12,20); (12,21) **\dir{-}?(1)*\dir{>};
   (18,8)*{\lambda};
\endxy}
 \;\; =\;\;
\vcenter{\xy 0;/r.17pc/:
    (4,-4)*{};(-4,4)*{} **\crv{(4,-1) & (-4,1)}?(1)*\dir{};
    (-4,-4)*{};(4,4)*{} **\crv{(-4,-1) & (4,1)}?(1)*\dir{};
    (-4,4)*{};(-12,12)*{} **\crv{(-4,7) & (-12,9)}?(1)*\dir{};
    (-12,4)*{};(-4,12)*{} **\crv{(-12,7) & (-4,9)}?(1)*\dir{};
    (4,12)*{};(-4,20)*{} **\crv{(4,15) & (-4,17)}?(1)*\dir{};
    (-4,12)*{};(4,20)*{} **\crv{(-4,15) & (4,17)}?(1)*\dir{};
    (4,4)*{}; (4,12) **\dir{-};
    (-12,-4)*{}; (-12,4) **\dir{-};
    (-12,12)*{}; (-12,20) **\dir{-};
    (4,20); (4,21) **\dir{-}?(1)*\dir{>};
    (-4,20); (-4,21) **\dir{-}?(1)*\dir{>};
    (-12,20); (-12,21) **\dir{-}?(1)*\dir{>};
  (10,8)*{\lambda};
\endxy}
 \label{eq_nil_rels}
  \end{equation}

\begin{equation}
 \xy 0;/r.18pc/:
  (4,6);(4,-4) **\dir{-}?(0)*\dir{<}+(2.3,0)*{};
  (-4,6);(-4,-4) **\dir{-}?(0)*\dir{<}+(2.3,0)*{};
 \endxy
 \quad =
\xy 0;/r.18pc/:
  (0,0)*{\xybox{
    (-4,-4)*{};(4,6)*{} **\crv{(-4,-1) & (4,1)}?(1)*\dir{>}?(.25)*{\bullet};
    (4,-4)*{};(-4,6)*{} **\crv{(4,-1) & (-4,1)}?(1)*\dir{>};
     (-10,0)*{};(10,0)*{};
     }};
  \endxy
 \;\; -
\xy 0;/r.18pc/:
  (0,0)*{\xybox{
    (-4,-4)*{};(4,6)*{} **\crv{(-4,-1) & (4,1)}?(1)*\dir{>}?(.75)*{\bullet};
    (4,-4)*{};(-4,6)*{} **\crv{(4,-1) & (-4,1)}?(1)*\dir{>};
     (-10,0)*{};(10,0)*{};
     }};
  \endxy
 \;\; =
\xy 0;/r.18pc/:
  (0,0)*{\xybox{
    (-4,-4)*{};(4,6)*{} **\crv{(-4,-1) & (4,1)}?(1)*\dir{>};
    (4,-4)*{};(-4,6)*{} **\crv{(4,-1) & (-4,1)}?(1)*\dir{>}?(.75)*{\bullet};
     (-10,0)*{};(10,0)*{};
     }};
  \endxy
 \;\; -
  \xy 0;/r.18pc/:
  (0,0)*{\xybox{
    (-4,-4)*{};(4,6)*{} **\crv{(-4,-1) & (4,1)}?(1)*\dir{>} ;
    (4,-4)*{};(-4,6)*{} **\crv{(4,-1) & (-4,1)}?(1)*\dir{>}?(.25)*{\bullet};
     (-10,0)*{};(10,0)*{};
     }};
  \endxy \label{eq_nil_dotslide}
\end{equation}

\item For $i \neq j$
\begin{equation}
 \vcenter{\xy 0;/r.17pc/:
    (-4,-4)*{};(4,4)*{} **\crv{(-4,-1) & (4,1)}?(1)*\dir{};
    (4,-4)*{};(-4,4)*{} **\crv{(4,-1) & (-4,1)}?(1)*\dir{};
    (-4,4)*{};(4,12)*{} **\crv{(-4,7) & (4,9)}?(1)*\dir{};
    (4,4)*{};(-4,12)*{} **\crv{(4,7) & (-4,9)}?(1)*\dir{};
    (8,8)*{\lambda};
    (4,12); (4,13) **\dir{-}?(1)*\dir{>};
    (-4,12); (-4,13) **\dir{-}?(1)*\dir{>};
  (-5.5,-3)*{\scs i};
     (5.5,-3)*{\scs j};
 \endxy}
 \qquad = \qquad
 \left\{
 \begin{array}{ccc}
     t_{ij}\;\xy 0;/r.17pc/:
  (3,9);(3,-9) **\dir{-}?(0)*\dir{<}+(2.3,0)*{};
  (-3,9);(-3,-9) **\dir{-}?(0)*\dir{<}+(2.3,0)*{};
  (-5,-6)*{\scs i};     (5.1,-6)*{\scs j};
 \endxy &  &  \text{if $(\alpha_i, \alpha_j)=0$,}\\ \\
 t_{ij} \vcenter{\xy 0;/r.17pc/:
  (3,9);(3,-9) **\dir{-}?(0)*\dir{<}+(2.3,0)*{};
  (-3,9);(-3,-9) **\dir{-}?(0)*\dir{<}+(2.3,0)*{};
  (-3,4)*{\bullet};(-6.5,5)*{};
  (-5,-6)*{\scs i};     (5.1,-6)*{\scs j};
 \endxy} \;\; + \;\; t_{ji}
  \vcenter{\xy 0;/r.17pc/:
  (3,9);(3,-9) **\dir{-}?(0)*\dir{<}+(2.3,0)*{};
  (-3,9);(-3,-9) **\dir{-}?(0)*\dir{<}+(2.3,0)*{};
  (3,4)*{\bullet};(7,5)*{};
  (-5,-6)*{\scs i};     (5.1,-6)*{\scs j};
 \endxy}
   &  & \text{if $(\alpha_i, \alpha_j) \neq 0$,}
 \end{array}
 \right. \label{eq_r2_ij-gen}
\end{equation}

\item For $i \neq j$ the dot sliding relations
\begin{equation} \label{eq_dot_slide_ij-gen}
\xy 0;/r.18pc/:
  (0,0)*{\xybox{
    (-4,-4)*{};(4,6)*{} **\crv{(-4,-1) & (4,1)}?(1)*\dir{>}?(.75)*{\bullet};
    (4,-4)*{};(-4,6)*{} **\crv{(4,-1) & (-4,1)}?(1)*\dir{>};
    (-5,-3)*{\scs i};
     (5.1,-3)*{\scs j};
     (-10,0)*{};(10,0)*{};
     }};
  \endxy
 \;\; =
\xy 0;/r.18pc/:
  (0,0)*{\xybox{
    (-4,-4)*{};(4,6)*{} **\crv{(-4,-1) & (4,1)}?(1)*\dir{>}?(.25)*{\bullet};
    (4,-4)*{};(-4,6)*{} **\crv{(4,-1) & (-4,1)}?(1)*\dir{>};
    (-5,-3)*{\scs i};
     (5.1,-3)*{\scs j};
     (-10,0)*{};(10,0)*{};
     }};
  \endxy
\qquad  \xy 0;/r.18pc/:
  (0,0)*{\xybox{
    (-4,-4)*{};(4,6)*{} **\crv{(-4,-1) & (4,1)}?(1)*\dir{>};
    (4,-4)*{};(-4,6)*{} **\crv{(4,-1) & (-4,1)}?(1)*\dir{>}?(.75)*{\bullet};
    (-5,-3)*{\scs i};
     (5.1,-3)*{\scs j};
     (-10,0)*{};(10,0)*{};
     }};
  \endxy
\;\;  =
  \xy 0;/r.18pc/:
  (0,0)*{\xybox{
    (-4,-4)*{};(4,6)*{} **\crv{(-4,-1) & (4,1)}?(1)*\dir{>} ;
    (4,-4)*{};(-4,6)*{} **\crv{(4,-1) & (-4,1)}?(1)*\dir{>}?(.25)*{\bullet};
    (-5,-3)*{\scs i};
     (5.1,-3)*{\scs j};
     (-10,0)*{};(12,0)*{};
     }};
  \endxy
\end{equation}
hold.

\item Unless $i = k$ and $(\alpha_i, \alpha_j) < 0$ the relation
\begin{equation}
\vcenter{\xy 0;/r.17pc/:
    (-4,-4)*{};(4,4)*{} **\crv{(-4,-1) & (4,1)}?(1)*\dir{};
    (4,-4)*{};(-4,4)*{} **\crv{(4,-1) & (-4,1)}?(1)*\dir{};
    (4,4)*{};(12,12)*{} **\crv{(4,7) & (12,9)}?(1)*\dir{};
    (12,4)*{};(4,12)*{} **\crv{(12,7) & (4,9)}?(1)*\dir{};
    (-4,12)*{};(4,20)*{} **\crv{(-4,15) & (4,17)}?(1)*\dir{};
    (4,12)*{};(-4,20)*{} **\crv{(4,15) & (-4,17)}?(1)*\dir{};
    (-4,4)*{}; (-4,12) **\dir{-};
    (12,-4)*{}; (12,4) **\dir{-};
    (12,12)*{}; (12,20) **\dir{-};
    (4,20); (4,21) **\dir{-}?(1)*\dir{>};
    (-4,20); (-4,21) **\dir{-}?(1)*\dir{>};
    (12,20); (12,21) **\dir{-}?(1)*\dir{>};
   (18,8)*{\lambda};  (-6,-3)*{\scs i};
  (6,-3)*{\scs j};
  (15,-3)*{\scs k};
\endxy}
 \;\; =\;\;
\vcenter{\xy 0;/r.17pc/:
    (4,-4)*{};(-4,4)*{} **\crv{(4,-1) & (-4,1)}?(1)*\dir{};
    (-4,-4)*{};(4,4)*{} **\crv{(-4,-1) & (4,1)}?(1)*\dir{};
    (-4,4)*{};(-12,12)*{} **\crv{(-4,7) & (-12,9)}?(1)*\dir{};
    (-12,4)*{};(-4,12)*{} **\crv{(-12,7) & (-4,9)}?(1)*\dir{};
    (4,12)*{};(-4,20)*{} **\crv{(4,15) & (-4,17)}?(1)*\dir{};
    (-4,12)*{};(4,20)*{} **\crv{(-4,15) & (4,17)}?(1)*\dir{};
    (4,4)*{}; (4,12) **\dir{-};
    (-12,-4)*{}; (-12,4) **\dir{-};
    (-12,12)*{}; (-12,20) **\dir{-};
    (4,20); (4,21) **\dir{-}?(1)*\dir{>};
    (-4,20); (-4,21) **\dir{-}?(1)*\dir{>};
    (-12,20); (-12,21) **\dir{-}?(1)*\dir{>};
  (10,8)*{\lambda};
  (-14,-3)*{\scs i};
  (-6,-3)*{\scs j};
  (6,-3)*{\scs k};
\endxy}
 \label{eq_r3_easy-gen}
\end{equation}
holds. Otherwise, $(\alpha_i, \alpha_j) =-1$ and
\begin{equation}
\vcenter{\xy 0;/r.17pc/:
    (-4,-4)*{};(4,4)*{} **\crv{(-4,-1) & (4,1)}?(1)*\dir{};
    (4,-4)*{};(-4,4)*{} **\crv{(4,-1) & (-4,1)}?(1)*\dir{};
    (4,4)*{};(12,12)*{} **\crv{(4,7) & (12,9)}?(1)*\dir{};
    (12,4)*{};(4,12)*{} **\crv{(12,7) & (4,9)}?(1)*\dir{};
    (-4,12)*{};(4,20)*{} **\crv{(-4,15) & (4,17)}?(1)*\dir{};
    (4,12)*{};(-4,20)*{} **\crv{(4,15) & (-4,17)}?(1)*\dir{};
    (-4,4)*{}; (-4,12) **\dir{-};
    (12,-4)*{}; (12,4) **\dir{-};
    (12,12)*{}; (12,20) **\dir{-};
    (4,20); (4,21) **\dir{-}?(1)*\dir{>};
    (-4,20); (-4,21) **\dir{-}?(1)*\dir{>};
    (12,20); (12,21) **\dir{-}?(1)*\dir{>};
   (18,8)*{\lambda};  (-6,-3)*{\scs i};
  (6,-3)*{\scs j};
  (15,-3)*{\scs k};
\endxy}
\quad - \quad
\vcenter{\xy 0;/r.17pc/:
    (4,-4)*{};(-4,4)*{} **\crv{(4,-1) & (-4,1)}?(1)*\dir{};
    (-4,-4)*{};(4,4)*{} **\crv{(-4,-1) & (4,1)}?(1)*\dir{};
    (-4,4)*{};(-12,12)*{} **\crv{(-4,7) & (-12,9)}?(1)*\dir{};
    (-12,4)*{};(-4,12)*{} **\crv{(-12,7) & (-4,9)}?(1)*\dir{};
    (4,12)*{};(-4,20)*{} **\crv{(4,15) & (-4,17)}?(1)*\dir{};
    (-4,12)*{};(4,20)*{} **\crv{(-4,15) & (4,17)}?(1)*\dir{};
    (4,4)*{}; (4,12) **\dir{-};
    (-12,-4)*{}; (-12,4) **\dir{-};
    (-12,12)*{}; (-12,20) **\dir{-};
    (4,20); (4,21) **\dir{-}?(1)*\dir{>};
    (-4,20); (-4,21) **\dir{-}?(1)*\dir{>};
    (-12,20); (-12,21) **\dir{-}?(1)*\dir{>};
  (10,8)*{\lambda};
  (-14,-3)*{\scs i};
  (-6,-3)*{\scs j};
  (6,-3)*{\scs i};
\endxy}
 \;\; =\;\;
 t_{ij} \;\;
\xy 0;/r.17pc/:
  (4,12);(4,-12) **\dir{-}?(0)*\dir{<};
  (-4,12);(-4,-12) **\dir{-}?(0)*\dir{<}?(.25)*\dir{};
  (12,12);(12,-12) **\dir{-}?(0)*\dir{<}?(.25)*\dir{};
  (-6,-9)*{\scs i};     (6.1,-9)*{\scs j};
  (14,-9)*{\scs i};
 \endxy
 \label{eq_r3_hard-gen}
\end{equation}
\end{enumerate}

\item When $i \ne j$ one has the mixed relations  relating $\cal{E}_i \cal{F}_j$ and $\cal{F}_j \cal{E}_i$:
\begin{equation}
 \vcenter{   \xy 0;/r.18pc/:
    (-4,-4)*{};(4,4)*{} **\crv{(-4,-1) & (4,1)}?(1)*\dir{>};
    (4,-4)*{};(-4,4)*{} **\crv{(4,-1) & (-4,1)}?(1)*\dir{<};?(0)*\dir{<};
    (-4,4)*{};(4,12)*{} **\crv{(-4,7) & (4,9)};
    (4,4)*{};(-4,12)*{} **\crv{(4,7) & (-4,9)}?(1)*\dir{>};
  (8,8)*{\lambda};(-6,-3)*{\scs i};
     (6,-3)*{\scs j};
 \endxy}
 \;\; = \;\; t_{ji}\;\;
\xy 0;/r.18pc/:
  (3,9);(3,-9) **\dir{-}?(.55)*\dir{>}+(2.3,0)*{};
  (-3,9);(-3,-9) **\dir{-}?(.5)*\dir{<}+(2.3,0)*{};
  (8,2)*{\lambda};(-5,-6)*{\scs i};     (5.1,-6)*{\scs j};
 \endxy
\qquad \quad
    \vcenter{\xy 0;/r.18pc/:
    (-4,-4)*{};(4,4)*{} **\crv{(-4,-1) & (4,1)}?(1)*\dir{<};?(0)*\dir{<};
    (4,-4)*{};(-4,4)*{} **\crv{(4,-1) & (-4,1)}?(1)*\dir{>};
    (-4,4)*{};(4,12)*{} **\crv{(-4,7) & (4,9)}?(1)*\dir{>};
    (4,4)*{};(-4,12)*{} **\crv{(4,7) & (-4,9)};
  (8,8)*{\lambda};(-6,-3)*{\scs i};
     (6,-3)*{\scs j};
 \endxy}
 \;\;=\;\; t_{ij}\;\;
\xy 0;/r.18pc/:
  (3,9);(3,-9) **\dir{-}?(.5)*\dir{<}+(2.3,0)*{};
  (-3,9);(-3,-9) **\dir{-}?(.55)*\dir{>}+(2.3,0)*{};
  (8,2)*{\lambda};(-5,-6)*{\scs i};     (5.1,-6)*{\scs j};
 \endxy
\end{equation}

\item \label{item_positivity} Negative degree bubbles are zero. That is, for all $m \in \Z_+$ one has
\begin{equation} \label{eq_positivity_bubbles}
\xy 0;/r.18pc/:
 (-12,0)*{\icbub{m}{i}};
 (-8,8)*{\lambda};
 \endxy
  = 0
 \qquad  \text{if $m<\lambda_i-1$,} \qquad \xy 0;/r.18pc/: (-12,0)*{\iccbub{m}{i}};
 (-8,8)*{\lambda};
 \endxy = 0\quad
  \text{if $m< -\lambda_i-1$.}
\end{equation}
On the other hand, a dotted bubble of degree zero is just  the identity 2-morphism\footnote{One can define the 2-category so that degree zero bubbles are multiplication by arbitrary scalars at the cost of modifying some of the other relations, see for example~\cite{Lau4,MSV2}.  However, it is shown in \cite{CLau} that the resulting 2-categories are all isomorphic.}:
\[
\xy 0;/r.18pc/:
 (0,0)*{\icbub{\lambda_i-1}{i}};
  (4,8)*{\lambda};
 \endxy
  =  \Id_{\onenn{\lambda}} \quad \text{for $\lambda_i \geq 1$,}
  \qquad \quad
  \xy 0;/r.18pc/:
 (0,0)*{\iccbub{-\lambda_i-1}{i}};
  (4,8)*{\lambda};
 \endxy  =  \Id_{\onenn{\lambda}} \quad \text{for $\lambda_i \leq -1$.}\]

\item \label{item_highersl2} For any $i \in I$ one has the extended ${\mathfrak{sl}}_2$-relations. In order to describe certain extended ${\mathfrak{sl}}_2$ relations it is convenient to use a shorthand notation from \cite{Lau1} called fake bubbles. These are diagrams for dotted bubbles where the labels of the number of dots is negative, but the total degree of the dotted bubble taken with these negative dots is still positive. They allow us to write these extended ${\mathfrak{sl}}_2$ relations more uniformly (i.e. independent on whether the weight $\lambda_i$ is positive or negative).
\begin{itemize}
 \item Degree zero fake bubbles are equal to the identity 2-morphisms
\[
 \xy 0;/r.18pc/:
    (2,0)*{\icbub{\l_i-1}{i}};
  (12,8)*{\lambda};
 \endxy
  =  \Id_{\onenn{\lambda}} \quad \text{if $\lambda_i \leq 0$,}
  \qquad \quad
\xy 0;/r.18pc/:
    (2,0)*{\iccbub{-\lambda_i-1}{i}};
  (12,8)*{\lambda};
 \endxy =  \Id_{\onenn{\lambda}} \quad  \text{if $\lambda_i \geq 0$}.\]

  \item Higher degree fake bubbles for $\lambda_i<0$ are defined inductively as
  \begin{equation} \label{eq_fake_nleqz}
  \vcenter{\xy 0;/r.18pc/:
    (2,-11)*{\icbub{\l_i-1+j}{i}};
  (12,-2)*{\l};
 \endxy} \;\; =
 \left\{
 \begin{array}{cl}
  \;\; -\;\;
\xsum{\xy (0,6)*{};  (0,1)*{\scs a+b=j}; (0,-2)*{\scs b\geq 1}; \endxy}
\;\; \vcenter{\xy 0;/r.18pc/:
    (2,0)*{\cbub{\l_i-1+a}{}};
    (20,0)*{\ccbub{-\l-1+b}{}};
  (12,8)*{\lambda};
 \endxy}  & \text{if $0 \leq j < -\l_i+1$} \\ & \\
   0 & \text{if $j < 0$. }
 \end{array}
\right.
 \end{equation}

  \item Higher degree fake bubbles for $\lambda_i>0$ are defined inductively as
   \begin{equation} \label{eq_fake_ngeqz}
  \vcenter{\xy 0;/r.18pc/:
    (2,-11)*{\iccbub{-\l_i-1+j}{i}};
  (12,-2)*{\l};
 \endxy} \;\; =
 \left\{
 \begin{array}{cl}
  \;\; -\;\;
\xsum{\xy (0,6)*{}; (0,1)*{\scs a+b=j}; (0,-2)*{\scs a\geq 1}; \endxy}
\;\; \vcenter{\xy 0;/r.18pc/:
    (2,0)*{\cbub{\l_i-1+a}{}};
    (20,0)*{\ccbub{-\l-1+b}{}};
  (12,8)*{\lambda};
 \endxy}  & \text{if $0 \leq j < \l_i+1$} \\ & \\
   0 & \text{if $j < 0$. }
 \end{array}
\right.
\end{equation}
\end{itemize}
These equations arise from the homogeneous terms in $t$ of the `infinite Grassmannian' equation
\begin{center}
\begin{eqnarray}
 \makebox[0pt]{ $
\left( \xy 0;/r.15pc/:
 (0,0)*{\iccbub{-\l_i-1}{i}};
  (4,8)*{\l};
 \endxy
 +
 \xy 0;/r.15pc/:
 (0,0)*{\iccbub{-\l_i-1+1}{i}};
  (4,8)*{\l};
 \endxy t
 + \cdots +
\xy 0;/r.15pc/:
 (0,0)*{\iccbub{-\l_i-1+\alpha}{i}};
  (4,8)*{\l};
 \endxy t^{\alpha}
 + \cdots
\right)
\left(\xy 0;/r.15pc/:
 (0,0)*{\icbub{\l_i-1}{i}};
  (4,8)*{\l};
 \endxy
 + \xy 0;/r.15pc/:
 (0,0)*{\icbub{\l_i-1+1}{i}};
  (4,8)*{\l};
 \endxy t
 +\cdots +
\xy 0;/r.15pc/:
 (0,0)*{\icbub{\l_i-1+\alpha}{i}};
 (4,8)*{\l};
 \endxy t^{\alpha}
 + \cdots
\right) = \Id_{\onel}.$ } \nn \\ \label{eq_infinite_Grass}
\end{eqnarray}
\end{center}
Now we can define the extended ${\mathfrak{sl}}_2$ relations.  Note that in \cite{CLau} additional curl relations were provided that can be derived from those above.  Here we provide a minimal set of relations.

If $\l_i > 0$ then we have:
\begin{equation} \label{eq_reduction-ngeqz}
  \xy 0;/r.17pc/:
  (14,8)*{\l};
  (-3,-10)*{};(3,5)*{} **\crv{(-3,-2) & (2,1)}?(1)*\dir{>};?(.15)*\dir{>};
    (3,-5)*{};(-3,10)*{} **\crv{(2,-1) & (-3,2)}?(.85)*\dir{>} ?(.1)*\dir{>};
  (3,5)*{}="t1";  (9,5)*{}="t2";
  (3,-5)*{}="t1'";  (9,-5)*{}="t2'";
   "t1";"t2" **\crv{(4,8) & (9, 8)};
   "t1'";"t2'" **\crv{(4,-8) & (9, -8)};
   "t2'";"t2" **\crv{(10,0)} ;
   (-6,-8)*{\scs i};
 \endxy\;\; = \;\; 0
   \qquad \quad
 \vcenter{\xy 0;/r.17pc/:
  (-8,0)*{};(-6,-8)*{\scs i};(6,-8)*{\scs i};
  (8,0)*{};
  (-4,10)*{}="t1";
  (4,10)*{}="t2";
  (-4,-10)*{}="b1";
  (4,-10)*{}="b2";
  "t1";"b1" **\dir{-} ?(.5)*\dir{>};
  "t2";"b2" **\dir{-} ?(.5)*\dir{<};
  (10,2)*{\l};
  \endxy}
\;\; = \;\; -\;\;
   \vcenter{\xy 0;/r.17pc/:
    (-4,-4)*{};(4,4)*{} **\crv{(-4,-1) & (4,1)}?(1)*\dir{<};?(0)*\dir{<};
    (4,-4)*{};(-4,4)*{} **\crv{(4,-1) & (-4,1)}?(1)*\dir{>};
    (-4,4)*{};(4,12)*{} **\crv{(-4,7) & (4,9)}?(1)*\dir{>};
    (4,4)*{};(-4,12)*{} **\crv{(4,7) & (-4,9)};
  (8,8)*{\l};(-6.5,-3)*{\scs i};  (6,-3)*{\scs i};
 \endxy}
\end{equation}
\begin{equation}
 \vcenter{\xy 0;/r.17pc/:
  (-8,0)*{};
  (8,0)*{};
  (-4,10)*{}="t1";
  (4,10)*{}="t2";
  (-4,-10)*{}="b1";
  (4,-10)*{}="b2";(-6,-8)*{\scs i};(6,-8)*{\scs i};
  "t1";"b1" **\dir{-} ?(.5)*\dir{<};
  "t2";"b2" **\dir{-} ?(.5)*\dir{>};
  (10,2)*{\l};
  \endxy}
\;\; = \;\; -\;\;
 \vcenter{   \xy 0;/r.17pc/:
    (-4,-4)*{};(4,4)*{} **\crv{(-4,-1) & (4,1)}?(1)*\dir{>};
    (4,-4)*{};(-4,4)*{} **\crv{(4,-1) & (-4,1)}?(1)*\dir{<};?(0)*\dir{<};
    (-4,4)*{};(4,12)*{} **\crv{(-4,7) & (4,9)};
    (4,4)*{};(-4,12)*{} **\crv{(4,7) & (-4,9)}?(1)*\dir{>};
  (8,8)*{\l};
     (-6,-3)*{\scs i};
     (6.5,-3)*{\scs i};
 \endxy}
  \;\; + \;\;
   \sum_{ \xy  (0,3)*{\scs f_1+f_2+f_3}; (0,0)*{\scs =\lambda_i-1};\endxy}
    \vcenter{\xy 0;/r.17pc/:
    (-12,10)*{\l};
    (-8,0)*{};
  (8,0)*{};
  (-4,-15)*{}="b1";
  (4,-15)*{}="b2";
  "b2";"b1" **\crv{(5,-8) & (-5,-8)}; ?(.05)*\dir{<} ?(.93)*\dir{<}
  ?(.8)*\dir{}+(0,-.1)*{\bullet}+(-3,2)*{\scs f_3};
  (-4,15)*{}="t1";
  (4,15)*{}="t2";
  "t2";"t1" **\crv{(5,8) & (-5,8)}; ?(.15)*\dir{>} ?(.95)*\dir{>}
  ?(.4)*\dir{}+(0,-.2)*{\bullet}+(3,-2)*{\scs \; f_1};
  (0,0)*{\iccbub{\scs \quad -\l_i-1+f_2}{i}};
  (7,-13)*{\scs i};
  (-7,13)*{\scs i};
  \endxy} \label{eq_ident_decomp-ngeqz}
\end{equation}

If $\lambda_i < 0$ then we have:
\begin{equation} \label{eq_reduction-nleqz}
  \xy 0;/r.17pc/:
  (-14,8)*{\l};
  (3,-10)*{};(-3,5)*{} **\crv{(3,-2) & (-2,1)}?(1)*\dir{>};?(.15)*\dir{>};
    (-3,-5)*{};(3,10)*{} **\crv{(-2,-1) & (3,2)}?(.85)*\dir{>} ?(.1)*\dir{>};
  (-3,5)*{}="t1";  (-9,5)*{}="t2";
  (-3,-5)*{}="t1'";  (-9,-5)*{}="t2'";
   "t1";"t2" **\crv{(-4,8) & (-9, 8)};
   "t1'";"t2'" **\crv{(-4,-8) & (-9, -8)};
   "t2'";"t2" **\crv{(-10,0)} ;
   (6,-8)*{\scs i};
 \endxy\;\; = \;\;
0
\qquad \qquad
\vcenter{\xy 0;/r.17pc/:
  (-8,0)*{};
  (8,0)*{};
  (-4,10)*{}="t1";
  (4,10)*{}="t2";
  (-4,-10)*{}="b1";
  (4,-10)*{}="b2";(-6,-8)*{\scs i};(6,-8)*{\scs i};
  "t1";"b1" **\dir{-} ?(.5)*\dir{<};
  "t2";"b2" **\dir{-} ?(.5)*\dir{>};
  (10,2)*{\l};
  \endxy}
\;\; = \;\; -\;\;
\vcenter{   \xy 0;/r.17pc/:
    (-4,-4)*{};(4,4)*{} **\crv{(-4,-1) & (4,1)}?(1)*\dir{>};
    (4,-4)*{};(-4,4)*{} **\crv{(4,-1) & (-4,1)}?(1)*\dir{<};?(0)*\dir{<};
    (-4,4)*{};(4,12)*{} **\crv{(-4,7) & (4,9)};
    (4,4)*{};(-4,12)*{} **\crv{(4,7) & (-4,9)}?(1)*\dir{>};
  (8,8)*{\l};(-6,-3)*{\scs i};
     (6.5,-3)*{\scs i};
 \endxy}
\end{equation}
\begin{equation}
 \vcenter{\xy 0;/r.17pc/:
  (-8,0)*{};(-6,-8)*{\scs i};(6,-8)*{\scs i};
  (8,0)*{};
  (-4,10)*{}="t1";
  (4,10)*{}="t2";
  (-4,-10)*{}="b1";
  (4,-10)*{}="b2";
  "t1";"b1" **\dir{-} ?(.5)*\dir{>};
  "t2";"b2" **\dir{-} ?(.5)*\dir{<};
  (10,2)*{\l};
  (-10,2)*{\l};
  \endxy}
\;\; = \;\;
  -\;\;\vcenter{\xy 0;/r.17pc/:
    (-4,-4)*{};(4,4)*{} **\crv{(-4,-1) & (4,1)}?(1)*\dir{<};?(0)*\dir{<};
    (4,-4)*{};(-4,4)*{} **\crv{(4,-1) & (-4,1)}?(1)*\dir{>};
    (-4,4)*{};(4,12)*{} **\crv{(-4,7) & (4,9)}?(1)*\dir{>};
    (4,4)*{};(-4,12)*{} **\crv{(4,7) & (-4,9)};
  (8,8)*{\l};(-6.5,-3)*{\scs i};  (6,-3)*{\scs i};
 \endxy}
  \;\; + \;\;
    \sum_{ \xy  (0,3)*{\scs g_1+g_2+g_3}; (0,0)*{\scs =-\l_i-1};\endxy}
    \vcenter{\xy 0;/r.17pc/:
    (-8,0)*{};
  (8,0)*{};
  (-4,-15)*{}="b1";
  (4,-15)*{}="b2";
  "b2";"b1" **\crv{(5,-8) & (-5,-8)}; ?(.1)*\dir{>} ?(.95)*\dir{>}
  ?(.8)*\dir{}+(0,-.1)*{\bullet}+(-3,2)*{\scs g_3};
  (-4,15)*{}="t1";
  (4,15)*{}="t2";
  "t2";"t1" **\crv{(5,8) & (-5,8)}; ?(.15)*\dir{<} ?(.9)*\dir{<}
  ?(.4)*\dir{}+(0,-.2)*{\bullet}+(3,-2)*{\scs g_1};
  (0,0)*{\icbub{\scs \quad\; \l_i-1 + g_2}{i}};
    (7,-13)*{\scs i};
  (-7,13)*{\scs i};
  (-10,10)*{\l};
  \endxy} \label{eq_ident_decomp-nleqz}
\end{equation}

If $\lambda_i =0$ then we have:
\begin{equation}\label{eq_reduction-neqz}
  \xy 0;/r.17pc/:
  (14,8)*{\l};
  (-3,-10)*{};(3,5)*{} **\crv{(-3,-2) & (2,1)}?(1)*\dir{>};?(.15)*\dir{>};
    (3,-5)*{};(-3,10)*{} **\crv{(2,-1) & (-3,2)}?(.85)*\dir{>} ?(.1)*\dir{>};
  (3,5)*{}="t1";  (9,5)*{}="t2";
  (3,-5)*{}="t1'";  (9,-5)*{}="t2'";
   "t1";"t2" **\crv{(4,8) & (9, 8)};
   "t1'";"t2'" **\crv{(4,-8) & (9, -8)};
   "t2'";"t2" **\crv{(10,0)} ;
   (-6,-8)*{\scs i};
 \endxy\;\; = \;\;-
   \xy 0;/r.17pc/:
  (-8,8)*{\l};
  (0,0)*{\bbe{}};
  (-3,-8)*{\scs i};
 \endxy
\qquad \qquad
  \xy 0;/r.17pc/:
  (-14,8)*{\l};
  (3,-10)*{};(-3,5)*{} **\crv{(3,-2) & (-2,1)}?(1)*\dir{>};?(.15)*\dir{>};
    (-3,-5)*{};(3,10)*{} **\crv{(-2,-1) & (3,2)}?(.85)*\dir{>} ?(.1)*\dir{>};
  (-3,5)*{}="t1";  (-9,5)*{}="t2";
  (-3,-5)*{}="t1'";  (-9,-5)*{}="t2'";
   "t1";"t2" **\crv{(-4,8) & (-9, 8)};
   "t1'";"t2'" **\crv{(-4,-8) & (-9, -8)};
   "t2'";"t2" **\crv{(-10,0)} ;
   (6,-8)*{\scs i};
 \endxy \;\; = \;\;
  \xy 0;/r.17pc/:
  (-8,8)*{\l};
  (0,0)*{\bbe{}};
  (-3,-8)*{\scs i};
 \endxy
\end{equation}
\begin{equation}\label{eq_reduction-neqz_2}
 \vcenter{\xy 0;/r.17pc/:
  (-8,0)*{};
  (8,0)*{};
  (-4,10)*{}="t1";
  (4,10)*{}="t2";
  (-4,-10)*{}="b1";
  (4,-10)*{}="b2";(-6,-8)*{\scs i};(6,-8)*{\scs i};
  "t1";"b1" **\dir{-} ?(.5)*\dir{<};
  "t2";"b2" **\dir{-} ?(.5)*\dir{>};
  (10,2)*{\l};
  \endxy}
\;\; = \;\;
 \;\; - \;\;
 \vcenter{   \xy 0;/r.17pc/:
    (-4,-4)*{};(4,4)*{} **\crv{(-4,-1) & (4,1)}?(1)*\dir{>};
    (4,-4)*{};(-4,4)*{} **\crv{(4,-1) & (-4,1)}?(1)*\dir{<};?(0)*\dir{<};
    (-4,4)*{};(4,12)*{} **\crv{(-4,7) & (4,9)};
    (4,4)*{};(-4,12)*{} **\crv{(4,7) & (-4,9)}?(1)*\dir{>};
  (8,8)*{l};(-6,-3)*{\scs i};
     (6.5,-3)*{\scs i};
 \endxy}
   \qquad \quad
 \vcenter{\xy 0;/r.17pc/:
  (-8,0)*{};(-6,-8)*{\scs i};(6,-8)*{\scs i};
  (8,0)*{};
  (-4,10)*{}="t1";
  (4,10)*{}="t2";
  (-4,-10)*{}="b1";
  (4,-10)*{}="b2";
  "t1";"b1" **\dir{-} ?(.5)*\dir{>};
  "t2";"b2" **\dir{-} ?(.5)*\dir{<};
  (10,2)*{\l};
  \endxy}
\;\; = \;\;
 \;\; - \;\;
   \vcenter{\xy 0;/r.17pc/:
    (-4,-4)*{};(4,4)*{} **\crv{(-4,-1) & (4,1)}?(1)*\dir{<};?(0)*\dir{<};
    (4,-4)*{};(-4,4)*{} **\crv{(4,-1) & (-4,1)}?(1)*\dir{>};
    (-4,4)*{};(4,12)*{} **\crv{(-4,7) & (4,9)}?(1)*\dir{>};
    (4,4)*{};(-4,12)*{} **\crv{(4,7) & (-4,9)};
  (8,8)*{\l};(-6,-3)*{\scs i};  (6,-3)*{\scs i};
 \endxy}
\end{equation}
\end{enumerate}

%
\subsubsection{Karoubi completions}
%

Recall that an idempotent $e \maps b\to b$ in a category $\cal{C}$ is a morphism such that
$e^2 = e$.  The idempotent is said to split if there exist morphisms
$
 \xymatrix{ b \ar[r]^g & b' \ar[r]^h &b}
$
such that $e=h  g$ and $gh = \id_{b'}$.    The Karoubi envelope $Kar(\cal{C})$ (also called the idempotent completion or Cauchy completion) of a category $\cal{C}$ is a minimal enlargement of the category $\cal{C}$ in which all idempotents split.  More precisely, the category $Kar(\cal{C})$ has
\begin{itemize}
  \item objects of $Kar(\cal{C})$:  pairs $(b,e)$
where $e \maps b \to b$ is an idempotent of $\cal{C}$.
\item morphisms: $(e,f,e') \maps (b,e) \to (b',e')$
where $f \maps b \to b'$ in $\cal{C}$ making the diagram
\begin{equation} \label{eq_Kar_morph}
 \xymatrix{
 b \ar[r]^f \ar[d]_e \ar[dr]^{f} & b' \ar[d]^{e'} \\ b \ar[r]_f & b'
 }
\end{equation}
commute, i.e. $ef=f=fe'$.
 \item identity 1-morphisms: $(e,e,e) \maps (b,e) \to (b,e)$.
\end{itemize}
When $\cal{C}$ is an additive category we write $(b,e)\in Kar(\cal{C})$ as $\im e$ and we have $b \cong \im e \oplus \im (1-e)$ in $Kar(\cal{C})$.

The Karoubi envelope $\UcatD_Q(\slm) := Kar(\Ucat_Q(\mf{sl}_m))$ of the 2-category $\Ucat_Q(\mf{sl}_m)$ is the 2-category with the same objects as $\Ucat_Q(\mf{sl}_m)$  whose Hom categories are given by
\[
 \UcatD_Q(\onel,\onelp) := Kar\big(\Ucat_Q(\onel,\onelp)\big).
\]
In particular, all idempotent 2-morphisms split in $\UcatD_Q(\onel,\onelp)$. It was shown in \cite{KL3} that there is an isomorphism of
$\mathcal{A}$-algebras
\begin{equation} \label{eq_gamma}
  \gamma \maps K_0(\UcatD_Q(\mf{sl}_m)) \longrightarrow _{\mathcal{A}}\U(\mf{sl}_m)
\end{equation}
between the split Grothendieck ring $K_0(\UcatD_Q(\mf{sl}_m))$ and the integral form $_{\mathcal{A}}\U(\mf{sl}_m)$ of the idempotent modified quantum enveloping algebra.
Recent results of Webster have generalized this statement to arbitrary type~\cite{Web}. Furthermore, the images of the indecomposable 1-morphisms in $\UcatD_Q(\mf{sl}_m)$ in $K_0(\Ucat_Q(\mf{sl}_m))$ agree with the Lusztig canonical basis in $_{\mathcal{A}}\U(\mf{sl}_m)$~\cite{Web4}.

Typically the passage from a diagrammatically defined category to its Karoubi envelope results in the loss of a completely diagrammatic description of the resulting category.  However, the Karoubi envelope $\UcatD_Q(\mf{sl}_2)$ of the 2-category $\Ucat_Q(\mf{sl}_2)$ still admits a completely diagrammatic description~\cite{KLMS}.  In this case, one defines idempotent 2-morphisms $e_a \maps \cal{E}^a\onel \to \cal{E}^a\onel$ given by the composite of any reduced presentation of the longest braid word on $a$ strands together with a specific pattern of dots starting with $a-1$ dots on the left-most strand, $a-2$ on the next strand, and ending with no dots on the last of the $a$ strands. An example is shown below for $a=4$.
\[
\xy 0;/r.15pc/:
 (-12,-20)*{}; (12,20) **\crv{(-12,-8) & (12,8)}?(1)*\dir{>};
 (-4,-20)*{}; (4,20) **\crv{(-4,-13) & (12,2) & (12,8)&(4,13)}?(1)*\dir{>};?(.88)*\dir{}+(0.1,0)*{\bullet};
 (4,-20)*{}; (-4,20) **\crv{(4,-13) & (12,-8) & (12,-2)&(-4,13)}?(1)*\dir{>}?(.86)*\dir{}+(0.1,0)*{\bullet};
 ?(.92)*\dir{}+(0.1,0)*{\bullet};
 (12,-20)*{}; (-12,20) **\crv{(12,-8) & (-12,8)}?(1)*\dir{>}?(.70)*\dir{}+(0.1,0)*{\bullet};
 ?(.90)*\dir{}+(0.1,0)*{\bullet};?(.80)*\dir{}+(0.1,0)*{\bullet};
 \endxy
 \qquad =: \qquad   \xy
 (0,0)*{\includegraphics[scale=0.4]{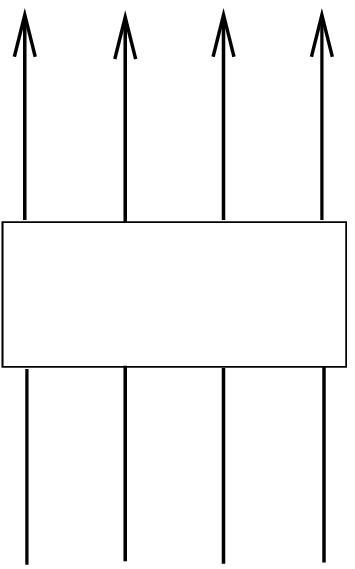}};
 (0,-0.5)*{e_a};
  \endxy
\]
It is convenient to introduce a box notation for this composite 2-morphism.

The divided power $\cal{E}^{(a)}\onel$ is defined in the Karoubi envelope $\UcatD_Q(\mf{sl}_2)$ as the pair
\[
\cal{E}^{(a)}\onel:= (\cal{E}^a\onel \{ \frac{a(a-1)}{2}\} , e_a)
\]
where the grading shift is necessary to get an isomorphism $\cal{E}^a\onel \cong \oplus_{[a]!}\cal{E}^{(a)}\onel$.
The divided power $\onel\cal{F}^{(a)}$ is then defined as the adjoint of $\cal{E}^{(a)}\onel$.  It was shown in \cite{KLMS} that splitting the idempotents $e_a$ by adding $\cal{E}^{(a)}\onel$ and $\cal{F}^{(b)}\onel$  gives rise to explicit decompositions of arbitrary 1-morphisms into indecomposable 1-morphisms using only the relations from $\Ucat_Q(\mf{sl}_2)$.  This allows for a strengthening of the categorification result to the case when we define $\Ucat_Q(\mf{sl}_2)$ by taking $\Z$-linear combinations of 2-morphisms, rather than $\Bbbk$-linear combinations for a field $\Bbbk$.

It is possible to represent the 1-morphisms $\cal{E}^{(a)}\onel$ in $\UcatD_Q(\mf{sl}_2)$ by introducing an augmented graphical calculus of thickened strands.  For
example, the identity 2-morphism for $\cal{E}^{(a)}\onel$ is given
by the triple
\begin{equation}
 (e_a,e_a,e_a)
 \quad
 =
 \quad
 \left( e_a,\;
 \xy
 (0,0)*{\includegraphics[scale=0.5]{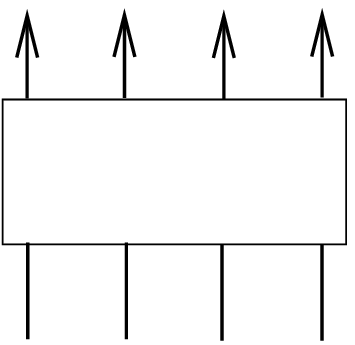}};
 (0,0)*{e_a}; (11,-5)*{\lambda};
  \endxy
 , e_a \right)
  \;\; =: \;\;     \xy
 (0,0)*{\includegraphics[scale=0.5]{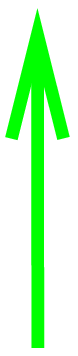}};
 (-2.5,-6)*{a}; (5,5)*{\lambda};
  \endxy
\end{equation}
where we think of the label $a$ on the right as describing the thickness of the strand.
A downward oriented line of thickness $b$ conveniently describes the 1-morphism $\cal{F}^{(b)}\onel$ in $\UcatD_Q(\mf{sl}_2)$.

One can introduce further notation to describe natural 2-morphisms in $\UcatD_Q(\mf{sl}_2)$.  For example, using the shorthand
\[ 
  \xy
 (0,0)*{\includegraphics[scale=0.5]{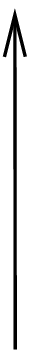}};
 (-3,-3)*{a};
  \endxy
  \quad : = \quad
  \xy
 (0,0)*{\includegraphics[scale=0.5]{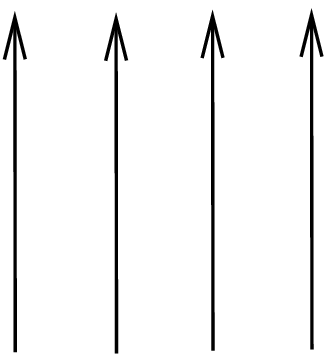}};
 (0,-11)*{\underbrace{\hspace{0.7in}}};  (0,-14)*{a};
  \endxy
 \qquad
 \qquad
   \xy
 (0,0)*{\includegraphics[scale=0.5]{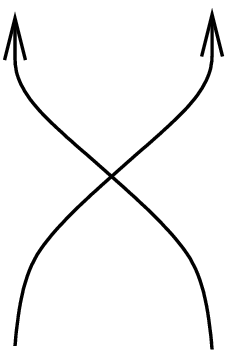}};
 (-7,-6)*{a};(7,-6)*{b};
  \endxy
  \quad := \quad
  \xy
 (0,0)*{\includegraphics[scale=0.5]{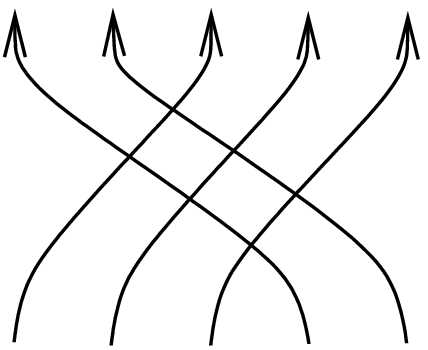}};
 (-5.5,-11)*{\underbrace{\hspace{0.45in}}};  (-5.5,-14)*{a};
 (7.5,-11)*{\underbrace{\hspace{0.25in}}};  (7.5,-14)*{b};
  \endxy
\]
there are 2-morphisms in $\UcatD_Q(\mf{sl}_2)$ given by
\begin{eqnarray}
    \xy
 (0,0)*{\includegraphics[scale=0.5]{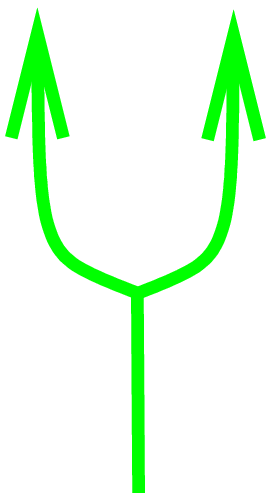}};
 (-5,-10)*{a+b};(-8,4)*{a};(8,4)*{b}; (7,-6)*{\l};
  \endxy
& :=&
     \left(e_{a+b}, \;\xy
 (0,0)*{\includegraphics[scale=0.5]{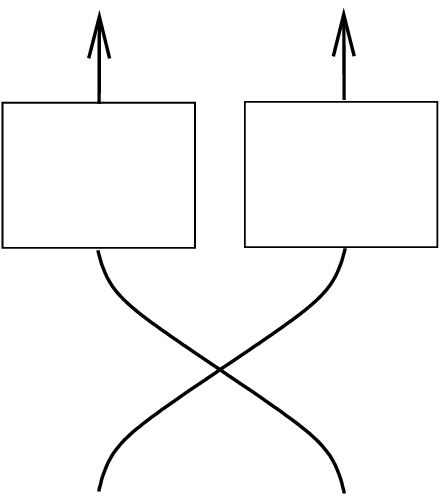}};
 (-7,-10)*{b};(7,-10)*{a};(-6,4)*{e_a};(6,4)*{e_b}; (14,-2)*{\l};
  \endxy \;, e_ae_b \right) \maps \cal{E}^{(a+b)}\onel \{t\} \to
  \cal{E}^{(a)}\cal{E}^{(b)}\onel\{t-ab\} \nn
  \\
\xy
 (0,0)*{\includegraphics[scale=0.5,angle=180]{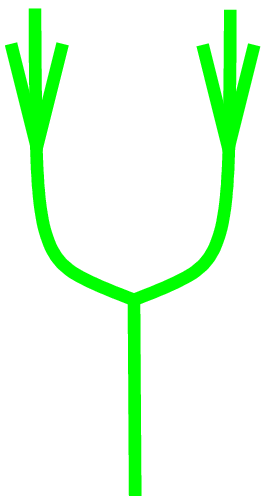}};
 (-6,10)*{a+b};(-8,-4)*{a};(8,-4)*{b}; (7,6)*{\l};
  \endxy
  &:= &
  \left( e_a e_b,\;
     \xy
 (0,0)*{\includegraphics[scale=0.5,angle=180]{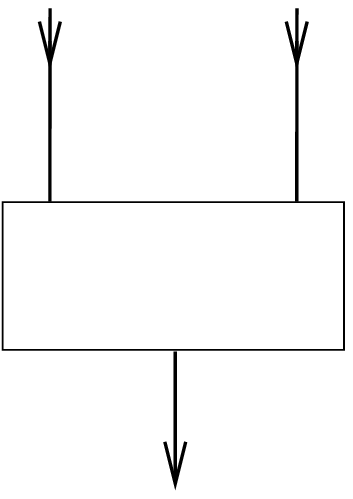}};
 (-8,-7)*{a};(8,-7)*{b};(0,1)*{e_{a+b}};(-5,10)*{a+b}; (11,6)*{\l};
  \endxy\;, e_{a+b} \right) \maps \cal{E}^{(a)}\cal{E}^{(b)}\onel \{t\} \to
  \cal{E}^{(a+b)}\onel\{t-ab\}. \nn
\end{eqnarray}
To compute the degree of the above diagrams one must account for the shift in the definition of divided powers.   For example, in the first diagram the degree shift in
the divided power for $\cal{E}^{(a+b)}\onel$ is
$\frac{(a+b)(a+b-1)}{2}$, while the degree shift in the composite
$\cal{E}^{(a)}\cal{E}^{(b)}\onel$ is
$\frac{a(a-1)}{2}+\frac{b(b-1)}{2}$, so that the net difference is
$\frac{2ab}{2} = ab$.  Both of the above diagrams in the thick calculus have
degree $-ab$.

For general $m$ there is no completely diagrammatic description of the Karoubi envelope of $\Ucat_Q(\slm)$.  In this case one lacks a set of diagrammatic relations needed to decompose arbitrary 1-morphisms into indecomposables, though explicit isomorphisms giving higher Serre relations were defined by Sto{\v{s}}i{\'c}~\cite{Stosic}. It will nevertheless be convenient to introduce a version of the 2-category $\Ucat_Q(\slm)$ where we have split the idempotents needed to define divided powers, but where we have not passed to the full Karoubi completion.  Diagrammatically this 2-category can be defined using thick strands carrying two labels, one indicating the thickness of the strand, and one indicated the label $i \in I$ of the a strands.  Since the thick strands are defined in terms of idempotents in thin strands, all the 2-morphisms can be studied using only the relations from $\Ucat_Q(\slm)$.

\begin{defn}
Let $\Ucatc_Q(\slm)$ denote the full sub-2-category of  $\UcatD_Q(\slm)$ with the same objects $\lambda \in X$ as $\UcatD_Q(\slm)$ and with 1-morphisms generated as a graded additive $\Bbbk$-linear category by  the 1-morphisms $\cal{E}_i\onel:=(\cal{E}_i \onel,\id_{\cal{E}_i \onel})$ and $\cal{E}^{(a)}_i\onel:= (\cal{E}_i^a\onel
\{\frac{a(a-1)}{2}\} , e_a)$ and their adjoints.
\end{defn}

%
\subsubsection{2-representations}
%

Let $\Ucat_Q$ denote any one of the 2-categories $\Ucat_Q(\mf{sl}_m)$, $\Ucatc_Q(\mf{sl}_m)$, or $\UcatD_Q(\mf{sl}_m)$.

\begin{defn}
A 2-representation of $\Ucat_Q$  is a graded additive $\Bbbk$-linear 2-functor $\Ucat_Q \to \cal{K}$ for some graded, additive 2-category $\cal{K}$.
\end{defn}

When all of the Hom categories $\cal{K}(x,y)$ between objects $x$ and $y$ of $\cal{K}$ are idempotent complete, in other words $Kar(\cal{K}) \cong \cal{K}$, then any graded additive $\Bbbk$-linear 2-functor $\Ucat_{Q}(\mf{g}) \to \cal{K}$ extends uniquely to a 2-representation of $\UcatD_Q(\mf{g})$.

\begin{rem}
For each $i \in I$ there is a sub 2-category $\Ucat_Q(\mf{sl}_2)_i$ of $\Ucat_Q(\slm)$ where we restrict to diagrams where all strands are labeled $i$.  For general 2-representations $\cal{F} \maps \Ucat_Q(\slm) \to \cal{K}$ it may happen that $\cal{K}$ is not Karoubi complete.  However, there are many instances when the images of divided powers $\cal{E}_i^{(a)}\onel$ and $\cal{F}_i^{(b)}\onel$ exist in $\cal{K}$.  In this case, the composite 2-functors $\cal{F}_i \maps \Ucat_Q(\mf{sl}_2)_i \to \Ucat_Q(\slm) \to \cal{K}$ extend to give 2-representations from the Karoubi envelope of the $\mf{sl}_2$ subcategories $\UcatD_Q(\mf{sl}_2)_i \to \cal{K}$. In this case, the 2-representation $\cal{F}$ extends to a 2-representation $\cal{F} \maps \Ucatc_Q(\slm) \to \cal{K}$.
\end{rem}

%
\subsubsection{Minimal relations and defining 2-functors}
%

In \cite{CLau}, it is shown that a 2-representation of $\Ucat_Q(\mf{sl}_m)$ can be specified by
defining a $2$-category
satisfying a small number of axioms. The following is a slightly stronger statement of the main theorem
from that work.

\begin{thm}[\cite{CLau} Theorem 1.1] \label{thm_CL}
A map $\mathcal{R}$ from the set of weights $X$ of $\mf{sl}_m$ to the objects of
graded additive $\Bbbk$-linear $2$-category $\cal{K}$ extends to a
2-representation $\Ucat_Q(\mf{sl}_m) \to \cal{K}$ provided the following conditions are
satisfied:
\begin{enumerate}

\item\label{co:int} The object $\cal{R}(\l+r\alpha_i)$ is
(isomorphic to) the zero object  for $r \gg 0$ or $r \ll 0$.

\item \label{co:hom}
$\Hom_{\cal{K}}(\1_{\l}, \1_{\l} \{ l \})$ is zero if $l < 0$
and one-dimensional if $l=0$, where $\1_\l$ denotes the
identity endomorphism of $\cal{R}(\l)$. Moreover, the space of 2-morphisms between any two
1-morphisms in $\cal{K}$ is finite dimensional.

\item\label{co:E} There exist 1-morphisms
$\E_i \1_{\l}: \cal{R}(\lambda) \rightarrow \cal{R}(\l + \alpha_i)$ in
$\cal{K}$ which possess both right and left adjoints.

\item \label{co:EF}
Defining 1-morphisms $\F_i \1_\l: \cal{R}(\l) \rightarrow \cal{R}(\l-\alpha_i)$ for all $\lambda \in X$ via
\begin{equation*} 
  \F_i \1_{\l+\alpha_i} := (\E_i \1_\l)_R \{ - \lambda_i - 1\}
\end{equation*}
we have the following isomorphisms in $\cal{K}$:
$$\F_i \1_{\lambda+\alpha_i} \E_i \1_{\l} \cong
\E_i \1_{\lambda-\alpha_i} \F_i \1_{\l} \oplus \left(\bigoplus_{[- \la i,\l \ra]} \1_\l \right)
\text{ if } \la i,\l \ra \le 0$$
$$\E_i \1_{\lambda-\alpha_i} \F_i \1_{\l} \cong
\F_i \1_{\lambda+\alpha_i} \E_i \1_{\l} \oplus \left( \bigoplus_{[\la i,\l \ra]} \1_\l \right)
\text{ if } \la i,\l \ra \ge 0.$$

\item \label{co:KLR} The $\E$'s carry an action of the KLR algebra associated to $Q$.

\item \label{co:EiFj} If $i \ne j \in I$ then $\F_j \1_{\l + \alpha_i} \E_i \1_{\l} \cong
\E_i \1_{\l - \alpha_j} \F_j \1_{\l}$ in $\cal{K}$.
\end{enumerate}
\end{thm}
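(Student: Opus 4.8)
The plan is to verify that conditions \ref{co:int}--\ref{co:EiFj} supply precisely the data and relations needed to build a graded additive $\Bbbk$-linear 2-functor out of the diagrammatic presentation of $\Ucat_Q(\mf{sl}_m)$ in Definition~\ref{defU_cat}. Since that presentation is generated by dots, upward/downward/sideways crossings, cups, and caps, modulo the listed relations, it suffices to (a) produce images in $\cal{K}$ for each generating 2-morphism and (b) check that every defining relation holds. The generators on the $\E$-side come for free: condition \ref{co:KLR} furnishes the dots and upward crossings as the KLR action, so the nilHecke relations \eqref{eq_nil_rels}--\eqref{eq_nil_dotslide} and the $R2$/$R3$ relations \eqref{eq_r2_ij-gen}--\eqref{eq_r3_hard-gen} among the $\E$'s hold automatically. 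Condition \ref{co:E} provides left and right adjoints for each $\E_i\1_\l$; using the shift convention of condition \ref{co:EF}, I would define $\F_i\1_\l$ as the shifted right adjoint and take the units and counits of the adjunctions as candidate cups and caps. The sideways and downward crossings are then \emph{defined} by rotating the upward crossings through these cups and caps, so that the cyclic relations \eqref{eq_cyclic_dot}--\eqref{eq_crossr-gen} are built into the definitions of those generators rather than checked afterward.

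The first substantive step is to pin down the adjunction data so that biadjointness \eqref{eq_biadjoint1}--\eqref{eq_biadjoint2} and the bubble normalizations hold. Here condition \ref{co:hom} does the decisive work. First one identifies the left adjoint of $\E_i$ with a shift of $\F_i$, which in an integrable action follows from the decompositions of condition \ref{co:EF} together with the vanishing coming from condition \ref{co:int}. Second, because $\Hom_{\cal{K}}(\1_\l,\1_\l\{l\})$ vanishes for $l<0$, any closed endomorphism of $\1_\l$ of negative degree is forced to be zero, giving the vanishing of negative-degree bubbles \eqref{eq_positivity_bubbles}; and since this space is one-dimensional in degree $0$, one may rescale the units and counits so that the degree-zero (fake) bubbles equal the identity. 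Finiteness of all 2-morphism spaces guarantees these rescalings are well defined and makes the relevant endomorphism algebras finite-dimensional, which is what lets the decompositions of condition \ref{co:EF} be analyzed. From this normalization the infinite Grassmannian relation \eqref{eq_infinite_Grass}, and hence the inductive definitions of fake bubbles \eqref{eq_fake_nleqz}--\eqref{eq_fake_ngeqz}, follow formally.

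The heart of the argument is extracting the extended $\mf{sl}_2$ relations \eqref{eq_reduction-ngeqz}--\eqref{eq_reduction-neqz_2}, for each fixed $i$, from the abstract isomorphisms of condition \ref{co:EF}. The isomorphism $\F_i\E_i\1_\l \cong \E_i\F_i\1_\l \oplus \bigoplus_{[-\la i,\l\ra]}\1_\l$ (and its mirror) only asserts the \emph{existence} of a splitting; it does not say that the splitting maps coincide with the specific cup/cap/dot composites appearing in those relations. The strategy is to argue that, once the easy relations and the bubble normalization are in place, the difference between the two sides of each reduction relation is a 2-morphism whose degree forces it into one of the Hom spaces controlled by condition \ref{co:hom}; positivity and one-dimensionality then rigidify the choice, so the splitting is realized by the standard composites up to an invertible scalar that the normalization absorbs. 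This is essentially the $\mf{sl}_2$ content of the theorem, and I expect it to be the main obstacle: tracking degrees and verifying that the candidate maps genuinely invert the decomposition (rather than being merely one-sided inverses) is delicate, and is exactly where the strengthened finiteness in condition \ref{co:hom} is needed.

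With the single-$i$ relations established, the remaining relations are the mixed ones for $i\ne j$: the $\E_i\E_j$ KLR relations are already covered by condition \ref{co:KLR}, and the two mixed relations relating $\E_i\F_j$ and $\F_j\E_i$ reduce directly to condition \ref{co:EiFj}. Condition \ref{co:int} guarantees that for each $i$ the weights $\l+r\alpha_i$ eventually meet the zero object, so every $\mf{sl}_2$-string is finite; this is what makes the sums in the reduction relations terminate and lets the $\mf{sl}_2$ analysis run on each string without convergence issues. Assembling these pieces yields a well-defined graded additive $\Bbbk$-linear 2-functor $\Ucat_Q(\mf{sl}_m)\to\cal{K}$, and when the Hom categories of $\cal{K}$ are idempotent complete it extends uniquely to $\UcatD_Q(\mf{sl}_m)$, as recorded after the definition of 2-representation.
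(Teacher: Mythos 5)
The paper does not prove this statement: it is quoted (with a slight strengthening) from Cautis--Lauda \cite{CLau}, so there is no internal proof to compare against. Judged against the actual argument in that reference, your outline has the right skeleton --- generators supplied by the KLR action and the adjunctions, bubble normalization via condition \eqref{co:hom}, mixed relations reduced to conditions \eqref{co:KLR} and \eqref{co:EiFj}, finiteness of $\alpha_i$-strings from condition \eqref{co:int} --- but the third paragraph, which you correctly identify as the heart of the matter, is not a proof. The assertion that ``positivity and one-dimensionality rigidify the choice, so the splitting is realized by the standard composites up to an invertible scalar'' is essentially the content of the theorem, and it does not follow from condition \eqref{co:hom} alone. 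What that condition controls is $\Hom_{\cal{K}}(\1_\l,\1_\l\{l\})$; the spaces you actually need are $\Hom(\F_i\E_i\1_\l,\1_\l\{l\})$, $\Hom(\F_i\E_i\1_\l,\E_i\F_i\1_\l\{l\})$, and their relatives, whose graded dimensions must first be \emph{computed} from the adjunctions together with the abstract decompositions of condition \eqref{co:EF}. Even once those dimensions are known, the fact that the cup/cap/dot composites live in low-dimensional Hom spaces does not show they are nonzero, nor that the matrix of such maps realizing the decomposition is invertible. In \cite{CLau} this nondegeneracy is established by induction on the weight along each $\alpha_i$-string, anchored at the extremal weights where condition \eqref{co:int} forces one summand to vanish; that induction is the missing engine of your argument, and without it the extended $\mf{sl}_2$ relations \eqref{eq_reduction-ngeqz}--\eqref{eq_reduction-neqz_2} remain unverified.

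A smaller gap: you declare the cyclicity relations \eqref{eq_cyclic_dot}--\eqref{eq_crossr-gen} to be ``built into the definitions.'' Defining the downward and sideways crossings by rotating the upward crossing makes one rotation tautological, but the relations assert that clockwise and counterclockwise rotation agree up to the scalars $t_{ij}$, and that the dot is cyclic; these are genuine identities among composites of units, counits, and KLR generators that must be checked, again after the relevant Hom spaces have been pinned down. So the proposal is a faithful road map of the Cautis--Lauda proof, but it leaves the two load-bearing verifications unexecuted.
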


In the above, we set
\[
\bigoplus_{f(q)} M := \bigoplus_{i= -l}^{k} (M\{i\})^{\oplus r_i}.
\]
when $\displaystyle f(q)=\sum_{i=-l}^k r_i q^i$ is a Laurent polynomial with $r_i\geq0$.

%
\subsection{Categorified Weyl group action}
%

The Weyl group for $\mathfrak{sl}_m$ is the symmetric group $\mathfrak{S}_m$  generated by transpositions $s_i$ associated to the roots $\alpha_1,\dots,\alpha_{m-1}$. The action of the Weyl group on the weights
lifts to a braid group action on representations of the associated quantum group $U_q(\mathfrak{sl}_m)$ (see for example \cite{Lus4,KamnTin,CKL}).

The action of a simple transposition is described by an element of the completion $\tilde{U_q(\slm)}$ of $U_q(\slm)$. This ring is defined as a quotient of the ring of series $\sum_{k=1}^{\infty} X_k$ of elements of $U_q(\slm)$ acting on each irreducible representation $V_{\lambda}$ of highest weight $\lambda$ by zero but for finitely many terms $X_k$ (see \cite{KamnTin}).
To $s_i$, we associate the braiding map $T_i\in \tilde{U_q(\slm)}$ :
\begin{equation} \label{qWeylAction_1}
T_{i}1_{\lambda} := \sum_{s\geq 0} (-q)^s E_{i}^{(-\lambda_i+s)}F_{i}^{(s)}1_{\lambda}
\end{equation}
if $\lambda_i\leq 0$,
\begin{equation}\label{qWeylAction_2}
T_{i}1_{\lambda} := \sum_{s\geq 0} (-q)^s F_{i}^{(\lambda_i+s)}E_{i}^{(s)}1_{\lambda}
\end{equation}
if $\lambda_i\geq 0$.
This definition differs from the one given in \cite[Section 5.2.1]{Lus4} but is equivalent up to rescaling, see \cite[Remark 6.4]{Cautis}. With this definition, $T_i=\sum_{\lambda \in X}T_i1_{\lambda}$ gives an endomorphism of any finite-dimensional representation. Note that if $v$ is a weight vector of weight $\lambda$, $T_i(v)$ is a weight vector of weight $s_i(\lambda)$. 

For $\mf{sl}_2$ the deformed Weyl group action on a $U_q(\mf{sl}_2)$-representation $V$ gives a reflection isomorphism from the $\lambda$ weight space of $V$ to the $-\lambda$ weight space.  This reflection isomorphism was categorified by Chuang and Rouquier in the context of abelian categories~\cite{CR}.  Their work is closely related to a variant of the 2-category $\Ucat_Q(\mf{sl}_2)$ where the nilHecke algebra is replaced by the affine Hecke algebra and there is no grading. Cautis, Kamnitzer and Licata later
developed analogous complexes in the context of $\Ucat_Q(\mf{sl}_2)$ and generalized Chuang and Rouquier's results to triangulated categories~\cite{CKL3}.

To categorify the reflection isomorphism $T_i1_{\lambda}$ it is clear from \eqref{qWeylAction_1} and \eqref{qWeylAction_2} that we will need to work in $\Ucatc_Q(\slm)$ so that we have lifts of divided powers. Also, the minus signs in the definition of the braid group generators suggests that we will have to pass to the 2-category $Kom(\Ucatc_Q(\slm))$ of bounded complexes over the 2-category $\Ucatc_Q(\slm)$ whose objects are weights $\lambda \in X$, 1-morphisms are bounded complexes of 1-morphisms in $\Ucatc_Q(\slm)$, and 2-morphisms are chain maps constructed from the 2-morphisms in $\Ucatc_Q(\slm)$.

 The braid group generator $T_i1_{\lambda}$ lifts to a complex $\cal{T}_i\onel$ in $Kom(\Ucatc_Q(\slm))$ of the form\footnote{Note that we take the mirror of Cautis' definition of these complexes in \cite{Cautis}, in order to better fit with usual definition of Khovanov homology. This also reverses the decategorification process so that a shift by $k$ will decategorify to $q^k$, while it decategorifies to $q^{-k}$ in \cite{Cautis}.}:
\begin{equation} \label{Rickardn}
\cal{T}_i \onel =
\xymatrix{ \cal{E}_i^{(-\l_i)} \onel \ar[r]^-{d_1} & \cal{E}_i^{(-\l_i+1)} \cal{F}_i \onel \{1\} \ar[r]^-{d_2} & \cdots
\ar[r]^-{d_s} & \cal{E}_i^{(-\l_i+s)} \cal{F}_i^{(s)} \onel \{s\} \ar[r]^-{d_{s+1}} &\cdots}
\end{equation}
when $\l_i \leq 0$ and
\begin{equation} \label{Rickardp}
\cal{T}_i \onel =
\xymatrix{ \cal{F}_i^{(\l_i)} \onel \ar[r]^-{d_1} & \cal{F}_i^{(\l_i+1)} \cal{E}_i \onel \{1\} \ar[r]^-{d_2} & \cdots
\ar[r]^-{d_s} & \cal{F}_i^{(\l_i+s)} \cal{E}_i^{(s)} \onel \{s\} \ar[r]^-{d_{s+1}} &\cdots}
\end{equation}
when $\l_i \geq 0$, where in the above formulae the leftmost term is in homological degree zero.
The differential $d_k$ that appears in the first complex is conveniently expressed in the extended graphical calculus from \cite{KLMS} as
\[
d_k =
\xy
 (0,0)*{\includegraphics[scale=0.35]{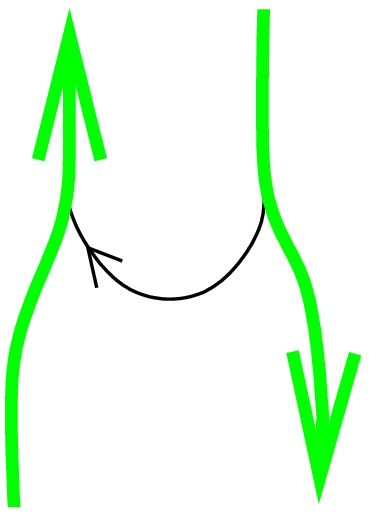}};
 (8,-7)*{\scs k};
 (-11,-7)*{\scs -\lambda_i+k};
 (8,7)*{\scs k+1};
 (-12,7)*{\scs -\lambda_i+k+1};
 (14,2)*{\lambda};
\endxy
\]
where all strands are colored by the index $i \in I$ and the labels indicate the thickness of strands.  The differential in the second complex is defined similarly.  Using the extended calculus it is easy to see that $d^2=0$.  Results of Cautis and Kamnitzer show the images of the complexes $\cal{T}_i\onel$ under any integrable 2-representation $\Ucatc(\slm) \to \cal{K}$ satisfy braid relations up to homotopy in $Kom(\cal{K})$~\cite[Section 6]{CK}.

The complexes $\cal{T}_i\onel$ are invertible, up to homotopy, with inverses given by taking the left adjoint of the complex $\cal{T}_i\onel$ in the 2-category $Kom(\Ucatc_Q(\slm)$. More explicitly, the inverses are given by
\[
\onel \cal{T}_i^{-1} =
\xymatrix{ \cdots \ar[r]^-{d_{s+1}^*} & \onel \cal{E}_i^{(s)} \cal{F}_i^{(-\l_i+s)} \{-s\} \ar[r]^-{d_s^*}
& \cdots \ar[r]^-{d_2^*}
& \onel \cal{E}_i \cal{F}_i^{(-\l_i+1)} \{-1\} \ar[r]^-{d_1^*} & \onel \cal{F}_i^{(-\l_i)}}
\]
when $\l_i \leq 0$ and
\[
\onel \cal{T}_i^{-1} =
\xymatrix{ \cdots \ar[r]^-{d_{s+1}^*} & \onel \cal{F}_i^{(s)} \cal{E}_i^{(\l_i+s)} \{-s\} \ar[r]^-{d_s^*}
& \cdots \ar[r]^-{d_2^*}
& \onel \cal{F}_i \cal{E}_i^{(\l_i+1)} \{-1\} \ar[r]^-{d_1^*} & \onel \cal{E}_i^{(\l_i)}}
\]
when $\l_i \geq 0$, where in these formulae the rightmost term is in homological degree zero.

Given a 2-representation $\cal{F} \maps \Ucatc_Q(\slm) \to \cal{K}$, the braiding $T_i1_{\lambda}$ is lifted to a complex $\cal{T}_i\onel$ that gives an equivalence between $\cal{F}(\lambda)$ and $\cal{F}(s_i(\lambda))$ in the 2-category of complexes $Kom(\cal{K})$ over $\cal{K}$.
For $\mf{sl}_2$ there are no interesting braid relations to check.  The content of a categorification of the reflection isomorphism is that the complex $\cal{T}_i\onel$ has a homotopy inverse, so that a 2-representation $\Ucatc_Q(\mf{sl}_2) \to \cal{K}$ induces an equivalence in the category of complexes over $\cal{K}$ \cite[Theorem 6.4]{CR}.  The resulting equivalences are highly nontrivial and have been applied to a variety of contexts ranging from the representation theory of the symmetric group~\cite{CR} to coherent sheaves on cotangent bundles~\cite{CKL2,CKL3}. Cautis and Kamnitzer later showed that given an integrable 2-representation  $\Ucat_Q(\mf{sl}_m) \to \cal{K}$
the complexes $\cal{T}_i\onel$ defined for each $i \in I$ satisfy the braid relations~\cite[Section 6]{CK}, see also ~\cite[Section 4.1]{Cautis}.  This is a crucial observation for Cautis's construction of knot homology theories from the 2-category $\Ucat_Q(\slm)$.

%
\section{Foams and foamation} \label{sec:foams}
%


We now aim to define families of foamation $2$-functors from $\Ucat_Q(\mathfrak{sl_m})$ to certain
$2$-categories of $\mathfrak{sl}_2$ and $\mathfrak{sl}_3$ foams. We use the particular choice of scalars $Q$
given by $t_{i,i+1}=1$, $t_{i,i-1}=-1$, and $t_{i,j}=1$ when $a_{i,j}=0$.

%
\subsection{$\mathfrak{sl}_2$ foam 2-categories}
%

In this section, we define a family of $2$-functors from $\Ucat_Q(\mathfrak{sl_m})$ to suitable
categories of $\mathfrak{sl}_2$ foams.
We first review Bar-Natan's cobordism-based construction of ($\mathfrak{sl}_2$) Khovanov homology
\cite{BN2} as well as a functorial enhancement of this theory due to Blanchet \cite{Blan}
which encodes additional representation-theoretic information. We will define our foamation $2$-functors into
a family of related $2$-categories which are natural to consider from the perspective of skew Howe duality.
We also construct such $2$-functors into the Clark-Morrison-Walker functorial
formulation of Khovanov homology \cite{CMW}.

%
\subsubsection{Standard $\mathfrak{sl}_2$ foams}
%

In \cite{BN2}, Bar Natan gave a construction of Khovanov homology as a quotient of the cobordism category of
planar tangles and surfaces. This work gives a categorification of (a version of) the
category $2\cat{Web}$.
We summarize this construction, which can be understood as a
$2$-category defined as follows:
\begin{itemize}
 \item Objects are sequences of points in the interval $[0,1]$, together with a zero object.
\item $1$-morphisms are formal direct sums of $\Z$-graded planar tangles with boundary corresponding to the
sequences of points in the domain and codomain.
\item $2$-morphisms are formal matrices of $\Bbbk$-linear combinations of degree-zero dotted
cobordisms between such
planar curves, modulo isotopy (relative to the boundary) and local relations.
\end{itemize}
If we denote the $\Z$-grading of a planar tangle by the monomial $q^t$ for $t \in \Z$, then the degree of a cobordism
$C: q^{t_1}T_1 \to q^{t_2}T_2$ is given by the formula
\begin{equation}\label{sl2foamdeg}
\deg(C) = \chi(C) -2\#D - \frac{\# \partial}{2} + t_2 - t_1
\end{equation}
where $\# D$ is the number of dots and
$\# \partial$ is the number of boundary points in either $T_1$ or $T_2$ (they agree!).
The local relations are then given as follows:
\begin{equation}\label{sl2closedfoam}
 \sphere[.65] \quad = \quad 0 \qquad , \qquad
\dottedsphere[.65]{}  \quad = \quad 1
\end{equation}
\begin{equation}\label{sl2neckcutting}
\cylinder[.6] \quad = \quad \slthncfour[.6] \quad + \quad \slthncfive[.6] \quad .
\end{equation}
The neck-cutting relation \eqref{sl2neckcutting} gives the formula:
\begin{equation} \label{sl2handle}
2 \quad \dotsheet[.65]{} \quad = \quad
\handlesheet[.65] \quad
\end{equation}
which allows for a completely topological description of the $2$-category
when $2$ is invertible in $\Bbbk$.

As mentioned in the introduction, the $+$ sign in the neck-cutting relation prevents us
from defining a $2$-functor from $\Ucat_Q(\mathfrak{sl_m})$ to this $2$-category since it is incompatible with
the sign in the nilHecke relation. We hence consider related versions of this construction.

%
\subsubsection{Enhanced foams}
%
Bar-Natan formulates Khovanov homology in the homotopy category of complexes in the above
$2$-category, giving an invariant which is functorial only up to a $(\pm 1)$-sign under tangle cobordism.
This functoriality issue was fixed by Clark, Morrison, and Walker \cite{CMW}
working in a related $2$-category of disoriented curves and cobordisms defined over the
Gaussian integers\footnote{Actually, they work over the ring $\Z[\frac{1}{2},i]$.} (see also the work of
Caprau \cite{Cap3}, \cite{Cap4}, \cite{Cap2} for a related construction).
Blanchet \cite{Blan} later gave another functorial construction of Khovanov homology in a
related $2$-category defined over the integers.

It turns out that in addition to fixing functoriality, these later constructions also fix the incompatibility of the
neck-cutting and nilHecke relations. We will work in Blanchet's enhanced foam model since it is more
natural to consider from the perspective of skew Howe duality and it avoids the introduction of complex
coefficients. We return to the Clark-Morrison-Walker (CMW) construction in the following section.

We begin by defining a family of $2$-categories related to Blanchet's construction which should be viewed as
categorifications of the categories $2\cat{BWeb}_m(N)$.

\begin{defn}
$\Bfoam{2}{m}$ is the $2$-category defined as follows:
\begin{itemize}
\item Objects are sequences $(a_1,\ldots,a_m)$ labeling points in the interval $[0,1]$
with $a_i \in \{0,1,2\}$ and $N = \sum_{i=1}^m a_i$, together with a zero object.
\item $1$-morphisms are formal direct sums of $\Z$-graded enhanced $\mathfrak{sl}_2$ webs -
directed planar graphs with boundary with two types of edges - $1$-labeled edges
$ \; \;
\xy
(0,0)*{
\begin{tikzpicture} [fill opacity=0.2,  decoration={markings,
                        mark=at position 0.6 with {\arrow{>}};    }]
\draw[very thick, postaction={decorate}] (0,-.5) -- (0,.5);
\end{tikzpicture}};
\endxy
\; \;
$
and $2$-labeled edges
$ \; \;
\xy
(0,0)*{
\begin{tikzpicture} [fill opacity=0.2,  decoration={markings,
                        mark=at position 0.6 with {\arrow{>}};    }]
\draw[double, postaction={decorate}] (0,-.5) -- (0,.5);
\end{tikzpicture}};
\endxy
\; \;
$
- where all vertices are trivalent and take the following two forms:
\begin{equation}\label{sl2vertices}
\xy
(0,0)*{
\begin{tikzpicture} [fill opacity=0.2,  decoration={markings,
                        mark=at position 0.6 with {\arrow{>}};    }]
\draw[double, postaction={decorate}] (0,0) -- (0,1);
\draw[very thick, postaction={decorate}] (-.875,-.5) -- (0,0);
\draw[very thick, postaction={decorate}] (.875,-.5) -- (0,0);
\end{tikzpicture}};
\endxy
\qquad
\text{ or }
\qquad
\xy
(0,0)*{
\begin{tikzpicture} [fill opacity=0.2,  decoration={markings,
                        mark=at position 0.6 with {\arrow{>}};    }]
\draw[double, postaction={decorate}] (0,1) -- (0,0);
\draw[very thick, postaction={decorate}] (0,0) -- (.875,-.5);
\draw[very thick, postaction={decorate}] (0,0) -- (-.875,-.5);
\end{tikzpicture}};
\endxy
\quad .
\end{equation}
$1$- (respectively $2$-) labeled edges are directed out from points labeled by $1$ (respectively $2$) in the
domain and directed into such labeled points in the codomain. No edges are attached to points labeled by $0$.
\item $2$-morphisms are formal matrices of $\Bbbk$-linear combinations of degree-zero $\mf{sl}_2$ foams -
surfaces with oriented singular seams which locally look like the product of the letter $Y$ with an
interval - considered up to isotopy (relative to the boundary) and local relations.
\end{itemize}
\end{defn}

There are two types of facets of an $\mathfrak{sl}_2$ foam, $1$-labeled and $2$-labeled, depending on
which type of edge they are incident upon when the foam is expressed as a composition
of elementary foams. The degree of a foam $F:q^{t_1}W_1 \to q^{t_2}W_2$ is given by the degree of the cobordism
resulting from deleting all the $2$-labeled facets and edges and forgetting the orientation of the $1$-labeled edges.

As in $\Ucat_Q(\mathfrak{sl_m})$, we shall read diagrammatic depictions of webs and foams from right to left
and from bottom to top. The orientation of a singular seam gives a cyclic ordering of the facets incident upon the
seam via the right hand rule.
By convention, a seam travels down through the first vertex in \eqref{sl2vertices} and up through the second; this
corresponds to the cyclic orientation of web vertices from \cite{Blan}.

The relations for $\mathfrak{sl}_2$ foams come from a non-local, universal construction detailed in
\cite{Blan}; however, we can exhibit a complete set of local relations giving an equivalent description of
Blanchet's work. In what follows, the $2$-labeled facets are depicted in yellow and $1$-labeled facets are drawn
in red.

We impose the relations \eqref{sl2closedfoam} and \eqref{sl2neckcutting} for $1$-labeled facets, as well as the following relations involving $2$-labeled facets:

\begin{equation} \label{sl2neckcutting_enh_2lab}
\xy
(0,0)*{
};
\endxy
\end{equation}

Relations \eqref{sl2closedfoam}, \eqref{sl2neckcutting}, \eqref{sl2neckcutting_enh_2lab},
\eqref{2labeledsphere}, and
\eqref{sl2thetafoam_enh} allow for the evaluation of any closed $\mathfrak{sl}_2$ foam. Additionally,
note that these relations imply that we can reverse the direction of any closed, singular seam at the
cost of multiplying by $-1$.
Equations \eqref{sl2Fig5Blanchet_1},  \eqref{sl2Fig5Blanchet_2}, and \eqref{sl2NH_enh} guarantee
that if a linear combination of foams evaluates to zero whenever it is ``closed off'' to give a closed foam, then
that linear combination is zero; the latter is a non-local relation from \cite{Blan}.
The equivalence of this relation to the
collection of local relations given above can be proved in a manner similar to the proof of
\cite[Lemma 3.5]{MorrisonNieh} using the above local relations.  The proof utilizes several relations that follow from the above local relations, allowing a web with a digon or square face to be expressed in terms of webs with fewer faces:
\begin{equation} \label{sl2bubble12}
\xy
{0,0}*{
};
\endxy \quad \mapsto \quad
\alpha \quad \cylinder[.6]
\]
for some scalar $\alpha$;
equation \eqref{sl2bubble12} shows that composing the left-hand foam with a cap produces a cap, while
pre-composing with a cup gives $-1$ multiplied by a cup.
It is therefore impossible to define such a $2$-functor which acts as the identity on foams which
contain no $2$-labeled facets.

%
\subsubsection{Foamation}
%

We now define $\mf{sl}_2$ foamation $2$-functors $\mathcal{U}_Q(\mf{sl}_m) \to \Bfoam{2}{m}$
categorifying the skew Howe map to webs discussed in the introduction.
As in the decategorified case, we define the $2$-functor on objects by sending an $\mf{sl}_m$ weight
$\lambda = (\lambda_1,\ldots,\lambda_{m-1})$ to the sequence $(a_1,\ldots,a_m)$ with
$a_i \in \{0,1,2\}$,
$\lambda_i = a_{i+1}-a_i$, and $\sum_{i=1}^{m}a_i = N$ provided it exists
and to the zero object otherwise.

The map is given on $1$-morphisms by
\[
\onel \{t\} \mapsto q^t
\xy
(0,0)*{
\begin{tikzpicture} [decoration={markings, mark=at position 0.6 with {\arrow{>}}; },scale=.75]
\draw [very thick, postaction={decorate}] (3,0) -- (0,0);
\draw [very thick, postaction={decorate}] (3,1) -- (0,1);
\node at (3.6,0) {$a_1$};
\node at (3.6,1) {$a_m$};
\node at (1.5,.7) {$\vdots$};
\end{tikzpicture}};
\endxy
\]
\[
\cal{E}_i \onel \{t\} \mapsto q^t
\xy
(0,0)*{
\begin{tikzpicture} [decoration={markings, mark=at position 0.6 with {\arrow{>}}; },scale=.75]
\draw [very thick, postaction={decorate}] (3,0) -- (2,0);
\draw[very thick, postaction={decorate}] (2,0) -- (0,0);
\draw [very thick, postaction={decorate}] (3,1) -- (1,1);
\draw[very thick, postaction={decorate}] (1,1) -- (0,1);
\draw [very thick, postaction={decorate}] (2,0) -- (1,1);
\node at (3.6,0) {$a_i$};
\node at (3.6,1) {$a_{i+1}$};
\node at (-1,0) {$a_{i} - 1$};
\node at (-1,1) {$a_{i+1}+1$};
\end{tikzpicture}};
\endxy
\]
and
\[
\cal{F}_i \onel \{t\} \mapsto q^t
\xy
(0,0)*{
\begin{tikzpicture} [decoration={markings, mark=at position 0.6 with {\arrow{>}}; },scale=.75]
\draw [very thick, postaction={decorate}] (3,0) -- (1,0);
\draw[very thick, postaction={decorate}] (1,0) -- (0,0);
\draw [very thick, postaction={decorate}] (3,1) -- (2,1);
\draw[very thick, postaction={decorate}] (2,1) -- (0,1);
\draw [very thick, postaction={decorate}] (2,1) -- (1,0);
\node at (3.6,0) {$a_i$};
\node at (3.6,1) {$a_{i+1}$};
\node at (-1,0) {$a_{i} + 1$};
\node at (-1,1) {$a_{i+1} - 1$};
\end{tikzpicture}};
\endxy
\]
when the boundary values lie in $\{0,1,2\}$ and to the zero $1$-morphism otherwise. The labelings
of the edges incident
upon the boundary are given by the boundary labels; edges incident upon boundary points labeled by zero
should be deleted. Note that we have not depicted $m-2$ horizontal strands in each of the latter two formulae.

We will make use of a preparatory lemma to deduce the existence of the foamation functors. Let the
images of $\onel$, $\cal{E}_i \onel$,  and $\cal{F}_i \onel$ given above be denoted
$\1_{\lambda}$, $\E_i \1_{\lambda} $, and $\F_i \1_{\lambda} $.

\begin{lem} \label{lem_abstract2}
There are  isomorphisms
$$\F_i \E_i \1_{\l} \cong \E_i \F_i \1_{\l} \oplus \left( \bigoplus_{[- \la i,\l \ra]} \1_\l \right) \text{ if } \la i,\l \ra \le 0$$
$$\E_i \F_i \1_{\l} \cong \F_i \E_i \1_{\l} \oplus \left( \bigoplus_{[\la i,\l \ra]} \1_\l \right) \text{ if } \la i,\l \ra \ge 0,$$
and $\F_j \E_i \1_{\l} \cong \E_i \F_j \1_{\l}$ for $i \ne j \in I$ in $\foam{2}{m}$.
\end{lem}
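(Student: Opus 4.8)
The plan is to establish each isomorphism locally, in the smallest possible number of columns, and then to verify that a few explicitly chosen foams are mutually inverse by reducing every closed foam that arises to a scalar via the evaluation relations. First I would observe that for all four $1$-morphisms the underlying webs agree with horizontal identity strands outside the columns touched by the relevant rungs: the images $\F_i\E_i\1_\l$ and $\E_i\F_i\1_\l$ differ from the identity web only in columns $i$ and $i+1$, while $\F_j\E_i\1_\l$ and $\E_i\F_j\1_\l$ involve at most columns $\{i-1,i,i+1\}$ or $\{i,i+1,i+2\}$. By a distant isotopy of foams it therefore suffices to prove the statements in the minimal setting $\Bfoam{2}{2}$ (respectively $\Bfoam{2}{3}$), where $\langle i,\l\rangle = \l_i = a_{i+1}-a_i \in \{-2,-1,0,1,2\}$, and then tensor back in the untouched strands.

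For the mixed relation $\F_j\E_i\1_\l \cong \E_i\F_j\1_\l$ with $i\neq j$, I expect little difficulty and no correction term, matching $E_iF_j = F_jE_i$. When $|i-j|>1$ the two rungs lie in disjoint columns and the composites are literally isotopic, so the evident product-of-identities cobordism is an isomorphism. When $|i-j|=1$ the composites differ by sliding one trivalent vertex past another along the single shared strand, and I would exhibit the interpolating ladder-isotopy foam together with its reverse, checking that the round-trip cancels using neck-cutting \eqref{sl2neckcutting} and the vertex relations \eqref{sl2Fig5Blanchet_1}--\eqref{sl2NH_enh}.

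The substantive case is the $\E_i\F_i$/$\F_i\E_i$ decomposition, where the composite web contains a digon (when $|\l_i|\leq 1$) or a square face (when $\l_i = \pm 2$, in which case $2$-labeled edges appear). Here I would write down the components of the claimed isomorphism directly as foams in $\Bfoam{2}{2}$ --- the evident cup, cap, saddle, and dotted-identity cobordisms --- with grading shifts $\{j\}$ forced by the degree formula \eqref{sl2foamdeg}, and then verify that these give orthogonal idempotents summing to the identity. The required simplifications are exactly the digon-removal relations \eqref{sl2bubble12}, \eqref{sl2bubble11nodot}, \eqref{sl2bubble11dot} and the square relations \eqref{sl2sq1}--\eqref{sl2sq3}, supplemented by the sphere and theta evaluations \eqref{sl2closedfoam}, \eqref{2labeledsphere}, \eqref{sl2thetafoam_enh} and the dot-sliding relation \eqref{sl2DotSliding}; these are the foam-level shadows of the ``circle'', ``bigon'', and ``square'' web relations discussed in the introduction, and they produce precisely the factors $[-\l_i]$ and $[\l_i]$.

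I expect the main obstacle to be the cases $\l_i = \pm 2$, where $2$-labeled facets are present and the decomposition must reproduce $[2] = q+q^{-1}$, that is the two summands $\1_\l\{1\}\oplus\1_\l\{-1\}$ with matching counit/unit foams of degrees $\pm 1$. The extra signs introduced by the enhanced relations \eqref{sl2neckcutting_enh_2lab}, \eqref{2labeledsphere}, and \eqref{sl2thetafoam_enh} must be tracked carefully to confirm that the two cap/cup pairs are genuinely orthogonal idempotents, and that their sum together with the $\E_i\F_i$-component equals the identity foam on $\F_i\E_i\1_\l$. Obtaining these signs on the nose (rather than merely up to an overall $\pm 1$) is the delicate bookkeeping step, and is where the choice of scalars $t_{i,i+1}=1$, $t_{i,i-1}=-1$ is used. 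Once these local identities are checked, the isomorphisms in $\Bfoam{2}{m}$ follow by reintroducing the identity strands, and the lemma supplies conditions \eqref{co:EF} and \eqref{co:EiFj} of Theorem~\ref{thm_CL}.
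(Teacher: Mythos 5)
Your strategy matches the paper's: a case-by-case analysis over the boundary labels $(a_i,a_{i+1})$, exhibiting explicit foam isomorphisms realizing web isotopies in the easy cases and reducing the digon and square faces via \eqref{sl2NH_enh}, \eqref{sl2bubble11nodot}, \eqref{sl2bubble11dot}, \eqref{sl2Tube_2}, and \eqref{sl2sq2} in the remaining ones, exactly as in the paper's proof. Two small corrections to your bookkeeping: which face appears is governed by $N_i=a_i+a_{i+1}$ and not by $\l_i$ alone --- the digon occurs at $(a_i,a_{i+1})=(2,0)$, i.e.\ $\l_i=-2$, and the square at $(2,1)$, i.e.\ $\l_i=-1$, while $(1,0)$ and $(1,1)$ need only isotopies --- and the choice of scalars $Q$ plays no role here, since the lemma is a statement internal to $\Bfoam{2}{m}$ that is independent of the $2$-functor $\Phi_2$.
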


\begin{proof}
We'll prove only the first relation since the proof of the second is analogous and the third is straightforward.
The condition on weights implies that $\lambda$ maps to a sequence with $a_{i+1}\leq a_i$.

If $a_{i}=0$, $a_{i+1}=0$, both sides of the equation map to the zero foam.
If $a_{i}=1$, $a_{i+1}=0$, then the web:
\[
\xy
(0,0)*{
};
\endxy
\]
is isomorphic to $\1_\l$ using equations \eqref{sl2Tube_2} and \eqref{sl2sq2}.
Finally, if $a_{i}=a_{i+1}=2$, both sides of the equation map to zero.
\end{proof}

\begin{prop} \label{prop_sl2}
For each $N>0$ there is a $2$-representation
$\Phi_2  \maps \Ucat_Q(\mf{sl}_m)  \to  \Bfoam{2}{m}$ defined on single strand $2$-morphisms by:
\[
\Phi_2 \left(
 \xy 0;/r.17pc/:
 (0,7);(0,-7); **\dir{-} ?(.75)*\dir{>};
 (7,3)*{ \scs \lambda};
 (-2.5,-6)*{\scs i};
 (-10,0)*{};(10,0)*{};
 \endxy
\right) = \quad \Efoam[.5] \quad , \quad
\Phi_2 \left(
 \xy 0;/r.17pc/:
 (0,7);(0,-7); **\dir{-} ?(.75)*\dir{>};
 (0,0)*{\bullet};
 (7,3)*{ \scs \lambda};
 (-2.5,-6)*{\scs i};
 (-10,0)*{};(10,0)*{};
 \endxy
\right) = \quad \dotEfoam[.5]
\]
on crossings by:
\[
\qquad
\Phi_2 \left(
\xy 0;/r.20pc/:
	(0,0)*{\xybox{
    	(-4,-4)*{};(4,4)*{} **\crv{(-4,-1) & (4,1)}?(1)*\dir{>} ;
    	(4,-4)*{};(-4,4)*{} **\crv{(4,-1) & (-4,1)}?(1)*\dir{>};
    	(-5.5,-3)*{\scs i};
     	(5.5,-3)*{\scs i};
     	(9,1)*{\scs  \lambda};
     	(-10,0)*{};(10,0)*{};}};
\endxy
\right) = \quad \crossingEEfoam[.5]
\]
\[
\Phi_2 \left(
\xy 0;/r.20pc/:
	(0,0)*{\xybox{
    	(-4,-4)*{};(4,4)*{} **\crv{(-4,-1) & (4,1)}?(1)*\dir{>} ;
    	(4,-4)*{};(-4,4)*{} **\crv{(4,-1) & (-4,1)}?(1)*\dir{>};
    	(-5.5,-3)*{\scs i};
     	(5.5,-3)*{\scs \ \ \ i+1};
     	(9,1)*{\scs  \lambda};
     	(-10,0)*{};(10,0)*{};}};
\endxy
\right) = \quad \crossingEoneEtwofoam[.5] \quad , \qquad
\Phi_2 \left(
\xy 0;/r.20pc/:
	(0,0)*{\xybox{
    	(-4,-4)*{};(4,4)*{} **\crv{(-4,-1) & (4,1)}?(1)*\dir{>} ;
    	(4,-4)*{};(-4,4)*{} **\crv{(4,-1) & (-4,1)}?(1)*\dir{>};
    	(-5.5,-3)*{\scs i+1 \ \ \ };
     	(5.5,-3)*{\scs i};
     	(9,1)*{\scs  \lambda};
     	(-10,0)*{};(10,0)*{};}};
\endxy
\right) = \quad \crossingEtwoEonefoam[.5]
\]

\[
\Phi_2 \left(
\xy 0;/r.20pc/:
	(0,0)*{\xybox{
    	(-4,-4)*{};(4,4)*{} **\crv{(-4,-1) & (4,1)}?(1)*\dir{>} ;
    	(4,-4)*{};(-4,4)*{} **\crv{(4,-1) & (-4,1)}?(1)*\dir{>};
    	(-5.5,-3)*{\scs j};
     	(5.5,-3)*{\scs i};
     	(9,1)*{\scs  \lambda};
     	(-10,0)*{};(10,0)*{};}};
\endxy
\right) = \quad \crossingEthreeEonefoam[.5] \quad , \qquad
\Phi_2 \left(
\xy 0;/r.20pc/:
	(0,0)*{\xybox{
    	(-4,-4)*{};(4,4)*{} **\crv{(-4,-1) & (4,1)}?(1)*\dir{>} ;
    	(4,-4)*{};(-4,4)*{} **\crv{(4,-1) & (-4,1)}?(1)*\dir{>};
    	(-5.5,-3)*{\scs i};
     	(5.5,-3)*{\scs j};
     	(9,1)*{\scs  \lambda};
     	(-10,0)*{};(10,0)*{};}};
\endxy
\right) = \quad \crossingEoneEthreefoam[.5]
\]
where $j-i > 1$, and on caps and cups by:
\[
\Phi_2 \left(
\xy 0;/r.20pc/:
	(0,0)*{\bbcef{i}};
	(8,4)*{\scs  \lambda};
	(-10,0)*{};(10,0)*{};
\endxy
\right) =  \quad \capFEfoam[.5] \quad , \qquad
\Phi_2 \left(
\xy 0;/r.20pc/:
	(0,0)*{\bbcfe{i}};
	(8,4)*{\scs  \lambda};
	(-10,0)*{};(10,0)*{};
\endxy
\right) = (-1)^{a_i} \quad \capEFfoam[.5]
\]

\[
\Phi_2 \left(
\xy 0;/r.20pc/:
	(0,0)*{\bbpef{i}};
	(8,-4)*{\scs \lambda};
	(-10,0)*{};(10,0)*{};
\endxy
\right) = (-1)^{a_i + 1} \quad \cupFEfoam[.5] \quad , \qquad
\Phi_2 \left(
\xy 0;/r.20pc/:
	(0,0)*{\bbpfe{i}};
	(8,-4)*{\scs \lambda};
	(-10,0)*{};(10,0)*{};
\endxy
\right) = \quad \cupEFfoam[.5]
\]
where in the above diagrams the $i^{th}$ sheet is always in the front.
\end{prop}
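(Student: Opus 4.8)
The plan is to invoke Theorem~\ref{thm_CL}, which reduces the construction of the $2$-representation $\Phi_2$ to verifying the six conditions (\ref{co:int})--(\ref{co:EiFj}); all of the biadjointness, cyclicity, extended $\mathfrak{sl}_2$, and bubble relations of $\Ucat_Q(\mathfrak{sl}_m)$ then hold automatically. As a preliminary step I would confirm that the assignment on $2$-morphisms is degree-preserving: using the foam degree formula \eqref{sl2foamdeg} together with the prescription that the degree of an enhanced foam is computed after deleting its $2$-labeled facets, one checks that each generating foam in the statement carries the same degree as the corresponding generator of $\Ucat_Q(\mathfrak{sl}_m)$. Condition~(\ref{co:int}) is then immediate: since the objects $(a_1,\dots,a_m)$ satisfy $a_i \in \{0,1,2\}$ and $\sum_i a_i = N$, only finitely many weight objects are nonzero, so $\Phi_2(\lambda + r\alpha_i)$ is the zero object for $|r| \gg 0$.

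Next I would establish condition~(\ref{co:hom}). The identity $1$-morphism $\1_\lambda$ maps to a web of horizontal strands, so its graded endomorphism algebra is spanned by closed foams built on these strands; applying the evaluation relations \eqref{sl2closedfoam}, \eqref{2labeledsphere}, and \eqref{sl2thetafoam_enh} shows this algebra is one-dimensional in degree zero (spanned by the identity foam), vanishes in negative degrees, and is finite dimensional in each degree. Condition~(\ref{co:E}) --- the existence of left and right adjoints for $\E_i\1_\lambda$ --- follows from the biadjoint structure of foams, with the cup and cap foams of the statement supplying units and counits; here I would verify the two zig-zag identities by foam isotopy, checking that the sign prefactors $(-1)^{a_i}$ and $(-1)^{a_i+1}$ on the caps and cups multiply to $+1$ so that each snake composite reduces to the identity foam. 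Conditions~(\ref{co:EF}) and~(\ref{co:EiFj}) are precisely the content of Lemma~\ref{lem_abstract2}.

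The remaining and most substantial step is condition~(\ref{co:KLR}): verifying that the images of the dot and crossing $2$-morphisms endow the $\E_i$'s with an action of the KLR algebra for the chosen scalars $Q$ (with $t_{i,i+1}=1$ and $t_{i,i-1}=-1$). For a single color this means checking the nilHecke relations \eqref{eq_nil_rels} and the dot-sliding relation \eqref{eq_nil_dotslide} in $\Bfoam{2}{m}$. Under $\Phi_2$ the relation \eqref{eq_nil_rels} becomes the enhanced neck-cutting relation \eqref{sl2NH_enh}, and the key point --- indeed the entire motivation for Blanchet's enhanced foams --- is that the $2$-labeled facet contributes exactly the sign needed to reconcile the $+$ in neck-cutting with the $-$ in the nilHecke relation. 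I would then treat the mixed-color cases: the relation \eqref{eq_r2_ij-gen} and dot-slide \eqref{eq_dot_slide_ij-gen} for $i \neq j$, and the $R3$ relations \eqref{eq_r3_easy-gen} and \eqref{eq_r3_hard-gen}, in each case separating the adjacent case $|i-j|=1$ from the distant case $|i-j|>1$ and using the explicit foams assigned to the relevant crossings.

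I expect condition~(\ref{co:KLR}) to be the main obstacle, and within it the global bookkeeping of signs. The sign conventions in \eqref{sl2neckcutting_enh_2lab}, \eqref{2labeledsphere}, \eqref{sl2thetafoam_enh}, and \eqref{sl2NH_enh}, together with the cup/cap prefactors and the choice of $Q$, must all be simultaneously consistent; verifying \eqref{eq_r3_hard-gen} in particular requires carefully tracking the cyclic orientation of singular seams (via the right-hand rule) through a somewhat involved foam isotopy. A useful reduction throughout is that, by the relations \eqref{sl2Fig5Blanchet_1}, \eqref{sl2Fig5Blanchet_2}, and \eqref{sl2NH_enh} --- and the digon and square relations \eqref{sl2bubble12}--\eqref{sl2sq3} derived from them --- any linear combination of foams is zero precisely when all of its closures evaluate to zero, so each relation reduces to a finite computation with the closed-foam evaluation rules.
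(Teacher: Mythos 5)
Your proposal is correct and follows essentially the same route as the paper: invoke Theorem~\ref{thm_CL}, dispose of conditions (\ref{co:EF}) and (\ref{co:EiFj}) via Lemma~\ref{lem_abstract2}, verify (\ref{co:hom}) by neck-cutting down to evaluable closed foams, and then carry out the case-by-case check of the KLR relations, with the $2$-labeled facets supplying the sign that reconciles neck-cutting with the nilHecke relation. The one economy the paper makes that you do not is that it never verifies the zig-zag identities or any other relation involving the cap and cup prefactors $(-1)^{a_i}$, $(-1)^{a_i+1}$: condition~(\ref{co:E}) only requires the \emph{existence} of adjoints, so those scalings are outputs of the proof of Theorem~\ref{thm_CL} rather than hypotheses to be checked.
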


The foams drawn above are general depictions of the images. To obtain the specific image foam we delete any
facets incident upon deleted web edges and re-color facets appropriately (in particular, the blue colored facets
in the above are used only to make the pictures more readable).
The singular seams may degenerate in such examples, e.g.
\[
\Phi_2 \left(
\xy 0;/r.20pc/:
	(0,0)*{\xybox{
    	(-4,-4)*{};(4,4)*{} **\crv{(-4,-1) & (4,1)}?(1)*\dir{>} ;
    	(4,-4)*{};(-4,4)*{} **\crv{(4,-1) & (-4,1)}?(1)*\dir{>};
    	(-5.5,-3)*{\scs i};
     	(5.5,-3)*{\scs i};
     	(9,1)*{\scs  \lambda};
     	(-10,0)*{};(10,0)*{};}};
\endxy
\right) = \quad
\xy
(0,0)*{
};
\endxy
\]
when $\lambda$ maps to a sequence with $a_i = 2$ and $a_{i+1}=0$.

\begin{proof}
While it is not difficult to verify all relations by hand, we apply Theorem~\ref{thm_CL} to $\Bfoam{2}{m}$ to reduce the number of relations that need to be verified. For each $m$ and $N$, the
non-zero objects of this $2$-category are indexed by the non-zero $\mathfrak{sl}_m$ weight spaces of the
finite-dimensional $U_q(\mathfrak{sl}_m)$-module $\wedge_q^N(\C^2 \otimes \C^m)$,
so condition \eqref{co:int} is satisfied.
Furthermore, it is clear from the definitions that $\cal{E}_i\1_\l$ has $\cal{F}_i\1_{\l+\alpha_i}$ as a left and
right adjoint, up to a grading shift.
Lemma~\ref{lem_abstract2} establishes conditions \eqref{co:EF} and \eqref{co:EiFj},
thus, it suffices to show that conditions \eqref{co:hom} and \eqref{co:KLR} are satisfied.

We first check condition \eqref{co:hom}.
Given a foam in $\Hom(\1_\l, q^t \1_\l)$, we can apply the neck-cutting relations \eqref{sl2neckcutting}
and \eqref{sl2neckcutting_enh_2lab} in the neighborhood of each boundary component to express the
foam as a linear combination of foams which are the disjoint unions of closed foams,
$2$-labeled sheets, and $1$-labeled sheets, which may carry dots. Relations \eqref{sl2closedfoam},
\eqref{sl2neckcutting}, \eqref{sl2neckcutting_enh_2lab}, \eqref{2labeledsphere}, and
\eqref{sl2thetafoam_enh} give that any closed foam is equal to an element of
$\Bbbk[\; \dottedsphere[.5]{3 \ } \;]$.
Equation \eqref{sl2foamdeg} then shows that $\Hom(\1_\l, q^t \1_\l)$ is zero for $t<0$ and $1$-dimensional
for $t=0$.

Using the neck-cutting relations, we can express any foam mapping between fixed webs $W_1$ and $W_2$
as a linear combination of foams in which every
$2$-labeled facet is a disk incident upon the boundary; such facets
are determined by the collection of singular seams incident upon the web vertices. The union of the
$1$-labeled facets gives a (dotted) cobordism between the $1$-labeled edges of $W_1$ and $W_2$.
Using the neck-cutting relations and \eqref{sl2NH_enh} we can assume that this cobordism
consists of (dotted) disks.
Since there are only finitely many ways to connect the vertices of the boundary webs with singular seams
lying on the cobordism (up to isotopy), it follows from \eqref{sl2foamdeg} that
$\Hom(q^{t_1} W_1, q^{t_2} W_2)$ is finite dimensional for all values of $t_1$ and $t_2$.

Finally, we check condition~\eqref{co:KLR}, i.e. that the KLR relations are satisfied.
\begin{itemize}
 \item Relation \eqref{eq_nil_rels}: the $2$-morphism
$
\vcenter{
\xy 0;/r.11pc/:
	(-4,-4)*{};(4,4)*{} **\crv{(-4,-1) & (4,1)}?(1)*\dir{};
	(4,-4)*{};(-4,4)*{} **\crv{(4,-1) & (-4,1)}?(1)*\dir{};
	(-4,4)*{};(4,12)*{} **\crv{(-4,7) & (4,9)}?(1)*\dir{};
	(4,4)*{};(-4,12)*{} **\crv{(4,7) & (-4,9)}?(1)*\dir{};
	(-4,12); (-4,13) **\dir{-}?(1)*\dir{>};
	(4,12); (4,13) **\dir{-}?(1)*\dir{>};
	(6,8)*{\scs \lambda};
\endxy}
$
automatically maps to the zero foam unless $\l$ maps to a sequence with
$a_i=2$ and $a_{i+1}=0$. In this case, we compute the image:
\[
\xy
(0,-9)*{
};
\endxy \quad
= 0
\]
which follows from equation \eqref{sl2bubble11nodot}.
The images of the $2$-morphisms
$
\xy
(0,0)*{
\xy 0;/r.11pc/:
    (-4,-4)*{};(4,4)*{} **\crv{(-4,-1) & (4,1)}?(1)*\dir{};
    (4,-4)*{};(-4,4)*{} **\crv{(4,-1) & (-4,1)}?(1)*\dir{};
    (4,4)*{};(12,12)*{} **\crv{(4,7) & (12,9)}?(1)*\dir{};
    (12,4)*{};(4,12)*{} **\crv{(12,7) & (4,9)}?(1)*\dir{};
    (-4,12)*{};(4,20)*{} **\crv{(-4,15) & (4,17)}?(1)*\dir{};
    (4,12)*{};(-4,20)*{} **\crv{(4,15) & (-4,17)}?(1)*\dir{};
    (-4,4)*{}; (-4,12) **\dir{-};
    (12,-4)*{}; (12,4) **\dir{-};
    (12,12)*{}; (12,20) **\dir{-};
    (4,20); (4,21) **\dir{-}?(1)*\dir{>};
    (-4,20); (-4,21) **\dir{-}?(1)*\dir{>};
    (12,20); (12,21) **\dir{-}?(1)*\dir{>};
   (18,8)*{\scs \lambda};
\endxy}
\endxy
$
and
$
\xy
(0,0)*{
\xy 0;/r.11pc/:
    (4,-4)*{};(-4,4)*{} **\crv{(4,-1) & (-4,1)}?(1)*\dir{};
    (-4,-4)*{};(4,4)*{} **\crv{(-4,-1) & (4,1)}?(1)*\dir{};
    (-4,4)*{};(-12,12)*{} **\crv{(-4,7) & (-12,9)}?(1)*\dir{};
    (-12,4)*{};(-4,12)*{} **\crv{(-12,7) & (-4,9)}?(1)*\dir{};
    (4,12)*{};(-4,20)*{} **\crv{(4,15) & (-4,17)}?(1)*\dir{};
    (-4,12)*{};(4,20)*{} **\crv{(-4,15) & (4,17)}?(1)*\dir{};
    (4,4)*{}; (4,12) **\dir{-};
    (-12,-4)*{}; (-12,4) **\dir{-};
    (-12,12)*{}; (-12,20) **\dir{-};
    (4,20); (4,21) **\dir{-}?(1)*\dir{>};
    (-4,20); (-4,21) **\dir{-}?(1)*\dir{>};
    (-12,20); (-12,21) **\dir{-}?(1)*\dir{>};
  (10,8)*{\scs \lambda};
\endxy}
\endxy
$
are both zero since either $\l$ or $\l+3\alpha_i$ maps to the zero object, confirming the relation. \\

\item Relation \eqref{eq_nil_dotslide}: As before, the only non-trivial case is when $\l$ maps to
a sequence with $a_i=2$ and $a_{i+1}=0$. In this case, we must have the equalities:
\begin{align*}
\xy
(0,0)*{
};
\endxy
\end{align*}
both of which follow from \eqref{sl2NH_enh}. \\

\item Relation \eqref{eq_r2_ij-gen}: The equality $(\alpha_i,\alpha_j) =0$ corresponds to $|i-j| \geq 2$ in which
case the image of the relation is realized via an isotopy. For example, when $i<j$ we see that
\[
\xy
(0,-10)*{\crossingEoneEthreefoam[.5]};
(0,10)*{\crossingEthreeEonefoam[.5]};
\endxy
\]
is isotopic to the identity foam for any values of the $a_l$'s.

If $i\neq j$ and $(\alpha_i,\alpha_j)\neq 0$ we must have $j=i\pm1$. We begin with the case
$j=i+1$. The image of the left-hand side is zero
unless $a_{i+1} = 1$ since the intermediate objects in the relation map to the zero object in the image;
similarly, the right-hand image is zero unless $a_{i+1}=1,2$.

When $a_{i+1}=1$ we have
\[
\vcenter{\xy 0;/r.17pc/:
    (-4,-4)*{};(4,4)*{} **\crv{(-4,-1) & (4,1)}?(1)*\dir{};
    (4,-4)*{};(-4,4)*{} **\crv{(4,-1) & (-4,1)}?(1)*\dir{};
    (-4,4)*{};(4,12)*{} **\crv{(-4,7) & (4,9)}?(1)*\dir{};
    (4,4)*{};(-4,12)*{} **\crv{(4,7) & (-4,9)}?(1)*\dir{};
    (8,8)*{\lambda};
    (4,12); (4,13) **\dir{-}?(1)*\dir{>};
    (-4,12); (-4,13) **\dir{-}?(1)*\dir{>};
  (-5.5,-3)*{\scs i};
     (5.5,-3)*{\scs \ \ \ i+1};
 \endxy} \quad
\mapsto \quad
\xy
(0,-6.45)*{

};
\endxy\]
is the zero foam. This follows from the dot-sliding relation \eqref{sl2DotSliding}.

When $j=i-1$, it similarly suffices to confirm the relation when $a_i = 0,1$. For $a_i=0$, the images of
$\vcenter{\xy 0;/r.11pc/:
  (3,-9);(3,9) **\dir{-}?(1)*\dir{>}+(2.3,0)*{};
  (-3,-9);(-3,9) **\dir{-}?(1)*\dir{>}+(2.3,0)*{};
  (-3,0)*{\bullet};(-6.5,5)*{};
  (-5,-6)*{\scs i};     (5.1,-6)*{\scs \ \ \ i-1};
 \endxy}
 $ and
 $  \vcenter{\xy 0;/r.11pc/:
  (3,-9);(3,9) **\dir{-}?(1)*\dir{>}+(2.3,0)*{};
  (-3,-9);(-3,9) **\dir{-}?(1)*\dir{>}+(2.3,0)*{};
  (3,0)*{\bullet};(7,5)*{};
  (-5,-6)*{\scs i};     (5.1,-6)*{\scs \ \ \ i-1};
 \endxy}
 $ are isotopic, so both sides of the relation map to zero.

For $a_i = 1$, we compute
\[
\vcenter{\xy 0;/r.17pc/:
    (-4,-4)*{};(4,4)*{} **\crv{(-4,-1) & (4,1)}?(1)*\dir{};
    (4,-4)*{};(-4,4)*{} **\crv{(4,-1) & (-4,1)}?(1)*\dir{};
    (-4,4)*{};(4,12)*{} **\crv{(-4,7) & (4,9)}?(1)*\dir{};
    (4,4)*{};(-4,12)*{} **\crv{(4,7) & (-4,9)}?(1)*\dir{};
    (8,8)*{\lambda};
    (4,12); (4,13) **\dir{-}?(1)*\dir{>};
    (-4,12); (-4,13) **\dir{-}?(1)*\dir{>};
  (-5.5,-3)*{\scs i};
     (5.5,-3)*{\scs \ \ \ i-1};
 \endxy} \quad
\mapsto \quad
\xy
(0,-8.3)*{
}
\endxy
\]
by equations \eqref{sl2NH_enh} (turned sideways!) and \eqref{sl2DotSliding}. \\

\item Relation \eqref{eq_dot_slide_ij-gen} follows by sliding a dot along a facet, i.e. via isotopy. \\

\item Relation \eqref{eq_r3_easy-gen}:
For all choices of $i$, $j$, and $k$ this relation holds via isotopy (or since both sides map to zero).
This is obvious in the case that two of the three
values $(\alpha_i,\alpha_j)$, $(\alpha_i,\alpha_k)$ and $(\alpha_j,\alpha_k)$ are zero. In the other cases a
computation is necessary;
note that we can assume $i\neq j \neq k$ since otherwise both sides of the equation automatically
map to zero (an intermediate weight must map to the zero object).

Suppose that $j=i+1$ and $k=i+2$,
then we compute both sides of the relation to be
\begin{equation}\label{ex220}
\xy
(0,-20)*{
};
\endxy
\end{equation}
which are equal up to isotopy for any value of the $a_l$'s. The other cases follow similarly. \\

\item Relation \eqref{eq_r3_hard-gen}: We must have $j=i\pm1$ and we'll only compute for $j=i+1$ since
the other case is analogous. Note that all $2$-morphisms involved automatically map to zero if
$\lambda$ is sent to a sequence with $a_{i+1}=2$ or with $a_i=0,1$, so we'll compute for the remaining values.

When $a_i=2$ and $a_{i+1}=0$ we have
\[
 \vcenter{
 \xy 0;/r.15pc/:
    (-4,-4)*{};(4,4)*{} **\crv{(-4,-1) & (4,1)}?(1)*\dir{};
    (4,-4)*{};(-4,4)*{} **\crv{(4,-1) & (-4,1)}?(1)*\dir{};
    (4,4)*{};(12,12)*{} **\crv{(4,7) & (12,9)}?(1)*\dir{};
    (12,4)*{};(4,12)*{} **\crv{(12,7) & (4,9)}?(1)*\dir{};
    (-4,12)*{};(4,20)*{} **\crv{(-4,15) & (4,17)}?(1)*\dir{};
    (4,12)*{};(-4,20)*{} **\crv{(4,15) & (-4,17)}?(1)*\dir{};
    (-4,4)*{}; (-4,12) **\dir{-};
    (12,-4)*{}; (12,4) **\dir{-};
    (12,12)*{}; (12,20) **\dir{-};
  (-4,20); (-4,21) **\dir{-}?(1)*\dir{>};
  (4,20); (4,21) **\dir{-}?(1)*\dir{>};
  (12,20); (12,21) **\dir{-}?(1)*\dir{>};
  (-4,-6)*{\scs i};
  (4,-6)*{\scs i+1};
  (12,-6)*{\scs i};
\endxy} \mapsto \quad
\xy
(0,-19.5)*{
};
\endxy
\qquad
\qquad
 \vcenter{
 \xy 0;/r.15pc/:
    (4,-4)*{};(-4,4)*{} **\crv{(4,-1) & (-4,1)}?(1)*\dir{};
    (-4,-4)*{};(4,4)*{} **\crv{(-4,-1) & (4,1)}?(1)*\dir{};
    (-4,4)*{};(-12,12)*{} **\crv{(-4,7) & (-12,9)}?(1)*\dir{};
    (-12,4)*{};(-4,12)*{} **\crv{(-12,7) & (-4,9)}?(1)*\dir{};
    (4,12)*{};(-4,20)*{} **\crv{(4,15) & (-4,17)}?(1)*\dir{};
    (-4,12)*{};(4,20)*{} **\crv{(-4,15) & (4,17)}?(1)*\dir{};
    (4,4)*{}; (4,12) **\dir{-};
    (-12,-4)*{}; (-12,4) **\dir{-};
    (-12,12)*{}; (-12,20) **\dir{-};
  (-4,20); (-4,21) **\dir{-}?(1)*\dir{>};
  (4,20); (4,21) **\dir{-}?(1)*\dir{>};
  (-12,20); (-12,21) **\dir{-}?(1)*\dir{>};
  (-12,-6)*{\scs i};
  (-4,-6)*{\scs i+1};
  (4,-6)*{\scs i};
\endxy} \mapsto 0
\]
which gives the relation since the former is isotopic to the identity.

Finally, when $a_i=2$ and $a_{i+1} = 1$ we compute that
\[
 \vcenter{
 \xy 0;/r.15pc/:
    (-4,-4)*{};(4,4)*{} **\crv{(-4,-1) & (4,1)}?(1)*\dir{};
    (4,-4)*{};(-4,4)*{} **\crv{(4,-1) & (-4,1)}?(1)*\dir{};
    (4,4)*{};(12,12)*{} **\crv{(4,7) & (12,9)}?(1)*\dir{};
    (12,4)*{};(4,12)*{} **\crv{(12,7) & (4,9)}?(1)*\dir{};
    (-4,12)*{};(4,20)*{} **\crv{(-4,15) & (4,17)}?(1)*\dir{};
    (4,12)*{};(-4,20)*{} **\crv{(4,15) & (-4,17)}?(1)*\dir{};
    (-4,4)*{}; (-4,12) **\dir{-};
    (12,-4)*{}; (12,4) **\dir{-};
    (12,12)*{}; (12,20) **\dir{-};
  (-4,20); (-4,21) **\dir{-}?(1)*\dir{>};
  (4,20); (4,21) **\dir{-}?(1)*\dir{>};
  (12,20); (12,21) **\dir{-}?(1)*\dir{>};
  (-4,-6)*{\scs i};
  (4,-6)*{\scs i+1};
  (12,-6)*{\scs i};
\endxy} \mapsto 0
, \qquad \qquad
 \vcenter{
 \xy 0;/r.15pc/:
    (4,-4)*{};(-4,4)*{} **\crv{(4,-1) & (-4,1)}?(1)*\dir{};
    (-4,-4)*{};(4,4)*{} **\crv{(-4,-1) & (4,1)}?(1)*\dir{};
    (-4,4)*{};(-12,12)*{} **\crv{(-4,7) & (-12,9)}?(1)*\dir{};
    (-12,4)*{};(-4,12)*{} **\crv{(-12,7) & (-4,9)}?(1)*\dir{};
    (4,12)*{};(-4,20)*{} **\crv{(4,15) & (-4,17)}?(1)*\dir{};
    (-4,12)*{};(4,20)*{} **\crv{(-4,15) & (4,17)}?(1)*\dir{};
    (4,4)*{}; (4,12) **\dir{-};
    (-12,-4)*{}; (-12,4) **\dir{-};
    (-12,12)*{}; (-12,20) **\dir{-};
  (-4,20); (-4,21) **\dir{-}?(1)*\dir{>};
  (4,20); (4,21) **\dir{-}?(1)*\dir{>};
  (-12,20); (-12,21) **\dir{-}?(1)*\dir{>};
  (-12,-6)*{\scs i};
  (-4,-6)*{\scs i+1};
  (4,-6)*{\scs i};
\endxy} \mapsto \quad
\xy
(0,-19.5)*{
};
\endxy
\]
which confirms the relation.

\end{itemize}

\end{proof}

Note that the scalings of the
images of the cap and cup $2$-morphisms play no role in the proof of the proposition. They are
determined by the proof of Theorem \ref{thm_CL}.

%
\subsubsection{Clark-Morrison-Walker foams} \label{CMWsec}
%

In the original construction of functorial Khovanov homology \cite{CMW},
Clark-Morrison-Walker use a variation of
Bar-Natan's $2$-category involving disoriented surfaces defined over the Gaussian integers. We can define
foamation $2$-functors to a family of $2$-categories related to their construction. We will assume some familiarity
with their work.

We fix once and for all $\omega$ to be a primitive fourth root of the unity.

\begin{defn}
$\cat{CMWFoam}_m(N)$ is the $2$-category defined as follows:
\begin{itemize}
\item Objects are sequences $(a_1,\ldots,a_m)$ labeling points in the interval $[0,1]$
with $a_i \in \{0,1,2\}$ and $N = \sum_{i=1}^m a_i$, together with a zero object.
\item $1$-morphisms are formal direct sums of $\Z$-graded disoriented planar tangles directed out from
$1$-labeled points in the domain and into such points in the codomain.
\item $2$-morphisms are formal matrices of $\Bbbk[\omega]$-linear combinations of degree-zero dotted disoriented
cobordisms between such disoriented planar tangles, modulo isotopy and local relations.
\end{itemize}
\end{defn}
The disorientations are represented by fringed seams;
the local relations are given by \eqref{sl2closedfoam} and \eqref{sl2neckcutting} in regions where no seams are
present and the following local seam relations:
\begin{align} \label{CMWseam}
 \xy
(0,0)*{
};
\endxy
\quad
\]
and the $2$-labeled sheets should be deleted from the image foams, retaining the seams and adding
fringes aligned with the disorientation ``tags'' on the tangles.


\begin{prop}
For each $N>0$ there is a $2$-representation
$\Phi_{CMW} \maps \Ucat_Q(\mf{sl}_m)  \to  \cat{CMWFoam}_m(N)$
defined on single strand $2$-morphisms by:

\[
\Phi_{CMW} \left(
 \xy 0;/r.17pc/:
 (0,7);(0,-7); **\dir{-} ?(.75)*\dir{>};
 (7,3)*{ \scs \lambda};
 (-2.5,-6)*{\scs i};
 (-10,0)*{};(10,0)*{};
 \endxy
\right) = \quad \Efoam[.5] \quad , \quad
\Phi_{CMW} \left(
 \xy 0;/r.17pc/:
 (0,7);(0,-7); **\dir{-} ?(.75)*\dir{>};
 (0,0)*{\bullet};
 (7,3)*{ \scs \lambda};
 (-2.5,-6)*{\scs i};
 (-10,0)*{};(10,0)*{};
 \endxy
\right) = \quad \dotEfoam[.5]
\]
on crossings by:
\[
\qquad
\Phi_{CMW} \left(
\xy 0;/r.20pc/:
	(0,0)*{\xybox{
    	(-4,-4)*{};(4,4)*{} **\crv{(-4,-1) & (4,1)}?(1)*\dir{>} ;
    	(4,-4)*{};(-4,4)*{} **\crv{(4,-1) & (-4,1)}?(1)*\dir{>};
    	(-5.5,-3)*{\scs i};
     	(5.5,-3)*{\scs i};
     	(9,1)*{\scs  \lambda};
     	(-10,0)*{};(10,0)*{};}};
\endxy
\right) = \quad (-\omega)\quad \crossingEEfoam[.5]
\]
\[
\Phi_{CMW} \left(
\xy 0;/r.20pc/:
	(0,0)*{\xybox{
    	(-4,-4)*{};(4,4)*{} **\crv{(-4,-1) & (4,1)}?(1)*\dir{>} ;
    	(4,-4)*{};(-4,4)*{} **\crv{(4,-1) & (-4,1)}?(1)*\dir{>};
    	(-5.5,-3)*{\scs i};
     	(5.5,-3)*{\scs \ \ \ i+1};
     	(9,1)*{\scs  \lambda};
     	(-10,0)*{};(10,0)*{};}};
\endxy
\right) =  \quad \crossingEoneEtwofoam[.5] \quad , \quad
\Phi_{CMW} \left(
\xy 0;/r.20pc/:
	(0,0)*{\xybox{
    	(-4,-4)*{};(4,4)*{} **\crv{(-4,-1) & (4,1)}?(1)*\dir{>} ;
    	(4,-4)*{};(-4,4)*{} **\crv{(4,-1) & (-4,1)}?(1)*\dir{>};
    	(-5.5,-3)*{\scs i+1 \ \ \ };
     	(5.5,-3)*{\scs i};
     	(9,1)*{\scs  \lambda};
     	(-10,0)*{};(10,0)*{};}};
\endxy
\right) = \quad \omega \quad \crossingEtwoEonefoam[.5]
\]

\[
\Phi_{CMW} \left(
\xy 0;/r.20pc/:
	(0,0)*{\xybox{
    	(-4,-4)*{};(4,4)*{} **\crv{(-4,-1) & (4,1)}?(1)*\dir{>} ;
    	(4,-4)*{};(-4,4)*{} **\crv{(4,-1) & (-4,1)}?(1)*\dir{>};
    	(-5.5,-3)*{\scs j};
     	(5.5,-3)*{\scs i};
     	(9,1)*{\scs  \lambda};
     	(-10,0)*{};(10,0)*{};}};
\endxy
\right) = \quad \crossingEthreeEonefoam[.5] \quad , \quad
\Phi_{CMW} \left(
\xy 0;/r.20pc/:
	(0,0)*{\xybox{
    	(-4,-4)*{};(4,4)*{} **\crv{(-4,-1) & (4,1)}?(1)*\dir{>} ;
    	(4,-4)*{};(-4,4)*{} **\crv{(4,-1) & (-4,1)}?(1)*\dir{>};
    	(-5.5,-3)*{\scs i};
     	(5.5,-3)*{\scs j};
     	(9,1)*{\scs  \lambda};
     	(-10,0)*{};(10,0)*{};}};
\endxy
\right) = \quad \crossingEoneEthreefoam[.5]
\]
where $j-i > 1$, and on caps and cups by:
\[
\Phi_{CMW} \left(
\xy 0;/r.20pc/:
	(0,0)*{\bbcef{i}};
	(8,4)*{\scs  \lambda};
	(-10,0)*{};(10,0)*{};
\endxy
\right) =  \quad \capFEfoam[.5]
\quad , \quad
\Phi_{CMW} \left(
\xy 0;/r.20pc/:
	(0,0)*{\bbcfe{i}};
	(8,4)*{\scs  \lambda};
	(-10,0)*{};(10,0)*{};
\endxy
\right) = (-1)^{a_i}(-\omega)^{\delta} \quad \capEFfoam[.5]
\]

\[
\Phi_{CMW} \left(
\xy 0;/r.20pc/:
	(0,0)*{\bbpef{i}};
	(8,-4)*{\scs \lambda};
	(-10,0)*{};(10,0)*{};
\endxy
\right) = (-1)^{a_i + 1}(-\omega)^{\delta} \quad \cupFEfoam[.5]
\quad , \quad
\Phi_{CMW} \left(
\xy 0;/r.20pc/:
	(0,0)*{\bbpfe{i}};
	(8,-4)*{\scs \lambda};
	(-10,0)*{};(10,0)*{};
\endxy
\right) = \quad \cupEFfoam[.5]
\]
where in the above diagrams the $i^{th}$ sheet is always in the front, and $\delta =1$ if $\lambda_i$ is
even and $\delta=0$ otherwise.
\end{prop}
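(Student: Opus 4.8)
The plan is to build $\Phi_{CMW}$ by the same mechanism used for $\Phi_2$ in Proposition~\ref{prop_sl2}: define the $2$-functor on generating $2$-morphisms by the stated formulas, reinterpret each image foam as a disoriented CMW cobordism via the recipe given above (delete every $2$-labeled facet, retain its bounding singular seams, and attach disorientation fringes aligned with the tags), and then invoke Theorem~\ref{thm_CL} to reduce the verification to conditions \eqref{co:int}--\eqref{co:EiFj}. As in the Blanchet case, condition \eqref{co:int} holds because the nonzero objects are indexed by the nonzero weight spaces of $\wedge_q^N(\C^2 \otimes \C^m)$, and condition \eqref{co:E} is immediate since $\E_i\1_\lambda$ visibly has $\F_i\1_{\lambda+\alpha_i}$ as a biadjoint up to grading shift. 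The scalars $(-1)^{a_i}(-\omega)^\delta$ on the cap and cup $2$-morphisms are not free parameters; exactly as in the remark following Proposition~\ref{prop_sl2} they are dictated by the construction underlying Theorem~\ref{thm_CL}, so once the remaining conditions are established the functor follows automatically.

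For conditions \eqref{co:hom}, \eqref{co:EF}, and \eqref{co:EiFj} I would transcribe the corresponding arguments from Proposition~\ref{prop_sl2} into $\cat{CMWFoam}_m(N)$. The relations \eqref{sl2closedfoam} and \eqref{sl2neckcutting} still hold away from seams, and the disorientation seam relations \eqref{CMWseam} now play the structural role that Blanchet's $2$-labeled facet relations \eqref{sl2thetafoam_enh}, \eqref{sl2NH_enh}, \eqref{sl2sq2} played before: they let one absorb any disoriented seam circle into a scalar $\pm\omega$ and thereby reduce every closed CMW foam to an element of $\Bbbk[\omega]\bigl[\, \dottedsphere[.4]{} \,\bigr]$. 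This yields the finiteness and degree-zero one-dimensionality of $\Hom(\1_\lambda, q^t \1_\lambda)$ required by \eqref{co:hom}, and an $\omega$-twisted analogue of Lemma~\ref{lem_abstract2} gives \eqref{co:EF} and \eqref{co:EiFj}, since the underlying web isotopies are unchanged and only the bookkeeping of disorientation data differs.

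The crux is condition \eqref{co:KLR}. Since the KLR relations involve only upward strands, the only scalars at play are the crossing coefficients $-\omega$, $\omega$, and $1$. Here I would re-run the casework of Proposition~\ref{prop_sl2} on the boundary sequences $(a_i,a_{i+1})$, using $\omega^2 = -1$ throughout, for the nilHecke relations \eqref{eq_nil_rels} and \eqref{eq_nil_dotslide}, the mixed relation \eqref{eq_r2_ij-gen}, and the hard $R3$ relation \eqref{eq_r3_hard-gen}. The essential new phenomenon is that a same-color crossing is now sent to a \emph{disoriented} foam whose singular seam, when resolved via \eqref{CMWseam} in the course of applying the dot-sliding relation \eqref{sl2DotSliding} or the square relation \eqref{sl2sq2}, produces a scalar $\pm\omega$. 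The coefficient $-\omega$ (respectively $\omega$) assigned to the crossing is precisely what undoes this residual factor, so that each KLR relation is designed to reduce to its already-verified Blanchet counterpart; the work is to confirm this cancellation case by case.

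The main obstacle I anticipate is this global $\omega$-consistency. A naive comparison route through the Bar-Natan category is blocked by the obstruction noted before the Foamation subsection---there is no forgetful $2$-functor acting trivially on $2$-facet-free foams---so the seam bookkeeping cannot be outsourced and must be carried out directly in $\cat{CMWFoam}_m(N)$. In particular, the parity-dependent exponent $\delta$ must be shown to count the correct mod-$2$ number of seam closures and fringe interactions arising when the $\E\F \cong \F\E$ isomorphisms of \eqref{co:EF}, \eqref{co:EiFj} and their adjunction maps are assembled, so that the $(-\omega)^\delta$ factors exactly cancel the residual $\omega$-scalars produced by \eqref{CMWseam}. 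Verifying that this parity is governed uniformly by whether $\lambda_i$ is even---simultaneously across all of the relations involving caps and cups---is the step I would treat with the most care; once it is settled, Theorem~\ref{thm_CL} delivers the desired $2$-representation.
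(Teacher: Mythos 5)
Your proposal matches the paper's proof: the paper likewise defines $\Phi_{CMW}$ on generators, reinterprets the image foams as disoriented cobordisms, and invokes Theorem~\ref{thm_CL}, omitting all details except the nilHecke dot-sliding computation \eqref{eq_nil_dotslide}, which (as you anticipate) is exactly the relation that forces the $-\omega$ scaling on the $(i,i)$ crossing via the seam relations \eqref{CMWseam}. Your final worry about verifying the parity exponent $\delta$ is already answered by your own second sentence --- as in the remark after Proposition~\ref{prop_sl2}, the cap and cup scalars are outputs of the proof of Theorem~\ref{thm_CL} rather than data to be checked, so no separate consistency argument is needed.
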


The proof is the same as for Proposition \ref{prop_sl2}: we apply Theorem \ref{thm_CL}. Most of the work involves
checking the KLR relations and is straightforward, so we omit almost all of the details.
The following calculation confirms the NilHecke relation \eqref{eq_nil_dotslide},
which we include to show the importance of the disorientation seams:
\begin{align*}
\Phi_{CMW} \left(
\xy 0;/r.18pc/:
  (0,0)*{\xybox{
    (-4,-4)*{};(4,6)*{} **\crv{(-4,-1) & (4,1)}?(1)*\dir{>}?(.25)*{\bullet};
    (4,-4)*{};(-4,6)*{} **\crv{(4,-1) & (-4,1)}?(1)*\dir{>};
     (-8,0)*{};(8,0)*{};
     }};
  \endxy
 -
\xy 0;/r.18pc/:
  (0,0)*{\xybox{
    (-4,-4)*{};(4,6)*{} **\crv{(-4,-1) & (4,1)}?(1)*\dir{>}?(.75)*{\bullet};
    (4,-4)*{};(-4,6)*{} **\crv{(4,-1) & (-4,1)}?(1)*\dir{>};
     (-8,0)*{};(8,0)*{};
     }};
  \endxy
\right)
&= \;\; -\omega \;\;\left(
\xy
(0,7)*{
};
\endxy
\;\;=\;\; \Phi_{CMW} \left( \; \;
 \xy 0;/r.18pc/:
  (4,6);(4,-4) **\dir{-}?(0)*\dir{<}+(2.3,0)*{};
  (-4,6);(-4,-4) **\dir{-}?(0)*\dir{<}+(2.3,0)*{};
 \endxy
\right) \quad .
\end{align*}

In the above pictures we have applied isotopies to the disoriented tangles and cobordisms for clarity.
This relation determines the scaling of the $(i,i)$ crossing in the definition above. The KLR relations also
fix the scaling on the composition of an $(i,i+1)$ and an $(i+1,i)$ crossing; we choose to rescale the $(i+1,i)$
crossing. The scalings of the other $2$-morphisms again follow from the proof of Theorem \ref{thm_CL}.

At present time, we don't have a good explanation for the rescalings in the above $2$-functor. For this reason,
we believe that $\Bfoam{2}{m}$ is a more natural setting for the foamation $2$-functors; in particular, we'll see in
Proposition \ref{prop_sl3_enhanced} that the definition of the foamation functor from
Proposition \ref{prop_sl2} generalizes mutatis mutandis to give a foamation $2$-functor to an enhanced version
of $\mf{sl}_3$ foams.

%
\subsection{$\mathfrak{sl}_3$ foam 2-categories}
%


In this section, we recall the definition of the $\mf{sl}_3$ foam $2$-category and prove the existence of the $\mf{sl}_3$ foamation 2-functor. We then define an enhanced $\mf{sl}_3$ foam
$2$-category similar to Blanchet's $\mf{sl}_2$ foam category which appears naturally
in the categorical skew Howe context. Finally, we give a functor from enhanced
foams to standard foams, contrasting the $\mf{sl}_2$ case.
%
\subsubsection{Standard $\mathfrak{sl}_3$ foams}
%

In \cite{Kh5}, Khovanov gives a foam based categorification of the $\mf{sl}_3$ link invariant.
This construction was generalized by Mackaay-Vaz in \cite{MV2} and
Morrison-Nieh in \cite{MorrisonNieh} in the spirit of Bar-Natan's $\mf{sl}_2$ cobordism $2$-category \cite{BN2}, giving a categorification of Kuperberg's $\mf{sl}_3$ spider \cite{Kup}.  Mackaay and Vaz showed~\cite{MV} that these foam based constructions of $\mf{sl}_3$ link homologies coincide with the $n=3$ case of Khovanov and Rozansky's $\sln$ link homologies defined via matrix factorizations~\cite{KhR}.

We now recall the definition of this $2$-category, which we denote $3\cat{Foam}$, using a hybrid of the above approaches:
\begin{itemize}
\item Objects are sequences of points in the interval $[0,1]$ labeled by $1$ or $2$, together with a
zero object.
\item $1$-morphisms are formal direct sums of $\Z$-graded $\mathfrak{sl}_3$ webs -
directed, trivalent planar graphs with
boundary in which each (interior) vertex is a sink or a source - where an edge is directed out from a point
labeled by $1$ and into a point labeled by $2$ in the domain and vice-versa in the codomain.
\item $2$-morphisms are formal matrices of $\Bbbk$-linear combinations of degree-zero $\mf{sl}_3$ foams -
dotted surfaces with oriented singular seams which locally look like the product of the letter $Y$ with an
interval - considered up to isotopy (relative to the boundary) and local relations.
\end{itemize}
Denoting the $\Z$-grading of  a web by the monomial $q^t$ for $t \in \Z$, the degree of a foam
$F: q^{t_1} W_1 \to q^{t_2} W_2$ is given by the formula
\[
\deg(F) = 2\chi(F) - \# \partial + \frac{\# V}{2} + t_2 - t_1
\]
where $\chi(F)$ is the Euler characteristic of the foam $F$,
$\# \partial$ is the number of boundary points in either $W_1$ or $W_2$ (they agree!), and $\# V$ is the
total number of trivalent vertices in $W_1 \coprod W_2$. A dot should be viewed as a puncture for the sake
of computing an $\mf{sl}_3$ foam's Euler characteristic. We shall depict $\mf{sl}_3$ foams using the colors
red and blue, for clarity; unlike the $\mf{sl}_2$ case, these colors have no meaning as all facets are
treated equally.

The local $\mf{sl}_3$ foam relations are as follows (where a number next to a dot denotes the
number of dots present):
\begin{equation}\label{sl3closedfoam}
 \sphere[.65] \quad = \quad 0 \quad = \quad
\dottedsphere[.65]{} \qquad , \qquad
\twodottedsphere[.65] \quad = \quad -1
\end{equation}
\begin{multline}\label{sl3neckcutting}
\cylinder[.6] \quad = \quad - \quad \slthncone[.6] \quad - \quad \slthnctwo[.6]  \quad - \quad \slthncthree[.6]
\quad - \quad \dottedsphere[.6]{3} \slthncfour[.6] \\
\qquad - \quad \dottedsphere[.6]{3} \slthncfive[.6] \quad - \quad
\left( \dottedsphere[.6]{4} \quad + \quad \dottedsphere[.6]{3}^2\right) \capcup[.6]
\end{multline}
\begin{equation}\label{sl3thetafoam}
\thetafoam[.7]{\alpha}{\gamma}{\beta} \quad = \quad
\left\{
	\begin{array}{rl}
	1 & \text{ if } (\alpha, \beta, \gamma) = (0,1,2) \text{ or a cyclic permutation} \\
	-1 & \text{ if } (\alpha, \beta, \gamma) = (0,2,1) \text{ or a cyclic permutation} \\
	0 & \text{ all other triples with $\alpha, \beta, \gamma \leq 2$ }
	\end{array}
\right.
\end{equation}
\begin{equation}\label{sl3tube}
\tubeLHS[.6] \quad = \quad \tubeRHStopdot[.6] \quad - \quad \tubeRHSbottomdot[.6]
\end{equation}
\begin{equation}\label{sl3rocket}
\Rfoamone[.65] \quad = \quad - \quad \Rfoamtwo[.65] \quad - \quad \Rfoamthree[.65] \quad .
\end{equation}
Note that the local foam relations are all degree-homogeneous.
The direction of the singular seams keeps track of a cyclic ordering of the incident facets; by convention,
we take this ordering to be given by the right-hand rule. This convention is opposite to that
used in \cite{Kh5} and \cite{MV2}, hence one would expect to see opposite foam relations above;
however,  we also reverse the relation between singular seams and trivalent vertices (seams are directed
up through source vertices and down through sink vertices) which corresponds to taking different
generating morphisms for the $2$-category. It is easy to see that the above $2$-category is isomorphic to the standard $\mf{sl}_3$ $2$-category. Our conventions are chosen to align with those in the definition of the 2-category $\Bfoam{2}{m}$.

Using the neck cutting relation \eqref{sl3neckcutting} and the theta-foam relation \eqref{sl3thetafoam},
we can derive the following local relations:
\begin{equation} \label{sl3bamboo}
\bamboo[.6] \quad = \quad \bambooRHSone[.6] \quad - \quad \bambooRHStwo[.6]
\end{equation}
\begin{equation}\label{sl3airlock}
\airlock[.6] \quad = \quad - \quad \capcup[.6]
\end{equation}
\begin{multline}\label{sl33dot}
\dotsheet[.65]{3} \quad =
\quad - \quad \dottedsphere[.6]{3} \quad \dotsheet[.65]{2} \quad
- \left( \dottedsphere[.6]{4} \quad + \quad \dottedsphere[.6]{3}^2 \right) \dotsheet[.65]{} \\
\qquad - \left(\dottedsphere[.6]{5} \quad + 2 \quad \dottedsphere[.6]{3} \quad \dottedsphere[.6]{4} \quad
+ \quad \dottedsphere[.6]{3}^3 \right)  \diagsheet[.65] \quad .
\end{multline}

The values of the $3$-, $4$-, and $5$-dotted spheres should be viewed as (graded) parameters which
are typically set equal to zero in the literature, e.g. in \cite{Kh5} and \cite{MorrisonNieh}, although
this is not required for our considerations. In the case that the $3$-dotted sphere is zero, Morrison-Nieh
show the relation
\[
3 \quad \dotsheet[.65]{} \quad =
\quad
\chsheet[.65] \quad .
\]
which allows for a completely topological description of this $2$-category when $3$ is invertible
in $\Bbbk$.

Note that the set of relations above does not explicitly correspond with that
from either \cite{MV2} or  \cite{MorrisonNieh}.
The relations \eqref{sl3closedfoam}, \eqref{sl3neckcutting}, \eqref{sl3thetafoam}, together with a non-local
relation constitute the relations from \cite{MV2} (although in that work the authors introduce parameters
$a$, $b$, and $c$ in place of the dotted-sphere parameters above).
In \cite{MorrisonNieh}, Morrison and Nieh show that the
relations \eqref{sl3tube} and \eqref{sl3rocket} imply the non-local relation. Our chosen set of relations above
almost agree with those of Morrison-Nieh (when the dotted surface parameters equal zero):
they impose the relation that reversing the orientation of a
singular seam negates the foam instead of specifying the values of the theta-foams;
this seam reversal relation follows from \eqref{sl3neckcutting} and \eqref{sl3thetafoam}.

As in the $\mf{sl}_2$ case, we are interested in a related family of $2$-categories which is natural to
consider from the
perspective of categorical skew Howe duality.
\begin{defn}
$\foam{3}{m}$ is the $2$-category defined as follows:
\begin{itemize}
\item Objects are sequences $(a_1,\ldots,a_m)$ labeling points in the interval $[0,1]$
with $a_i \in \{0,1,2,3\}$ and $N = \sum_{i=1}^m a_i$ together with a zero object.
\item $1$-morphisms are formal direct sums of $\Z$-graded $\mathfrak{sl}_3$ webs mapping
between the points labeled by $1$ and $2$ as in $3\cat{Foam}$.
\item $2$-morphisms are formal matrices of $\Bbbk$-linear combinations of degree-zero
$\mf{sl}_3$ foams mapping between such webs.
\end{itemize}
\end{defn}
Note that the objects in $\foam{3}{m}$ correspond with the direct summands appearing in the decomposition of
$\bigwedge_q^N \left(\C^3 \otimes \C^m\right)$ into $\mf{sl}_m$ weight spaces and
$1$-morphisms correspond
to $\mf{sl}_3$ intertwiners between such summands.
For each $m$ and $N$, there is an obvious $2$-functor $\foam{3}{m} \to 3\cat{Foam}$ which forgets the
$0$'s and $3$'s.

%
\subsubsection{Foamation}
%

We now define $\mf{sl}_3$ foamation $2$-functors $\mathcal{U}_Q(\mf{sl}_m) \to \foam{3}{m}$.
As in the $\mf{sl}_2$ case, the $2$-functor is defined on objects by sending an $\mf{sl}_m$ weight
$\lambda = (\lambda_1,\ldots,\lambda_{m-1})$ to the sequence $(a_1,\ldots,a_m)$ with
$a_i \in \{0,1,2,3\}$, $\lambda_i = a_{i+1}-a_i$, and $\sum_{i=1}^{m}a_i = N$ provided
such a sequence exists and to the zero object otherwise.

The map on $1$-morphisms is again given by:
\[
\onel \{t\} \mapsto q^t
\xy
(0,0)*{
\begin{tikzpicture} [decoration={markings, mark=at position 0.6 with {\arrow{>}}; },scale=.75]
\draw [very thick] (0,0) -- (3,0);
\draw [very thick] (0,1) -- (3,1);
\node at (3.6,0) {$a_1$};
\node at (3.6,1) {$a_m$};
\node at (-.5,0) {$a_1$};
\node at (-.5,1) {$a_m$};
\node at (1.5,.7) {$\vdots$};
\end{tikzpicture}};
\endxy
\]
\[
\cal{E}_i \onel \{t\} \mapsto q^t
\xy
(0,0)*{
\begin{tikzpicture} [decoration={markings, mark=at position 0.6 with {\arrow{>}}; },scale=.75]
\draw [very thick] (0,0) -- (3,0);
\draw [very thick] (0,1) -- (3,1);
\draw [very thick, postaction={decorate}] (2,0) -- (1,1);
\node at (3.6,0) {$a_i$};
\node at (3.6,1) {$a_{i+1}$};
\node at (-1,0) {$a_{i} - 1$};
\node at (-1,1) {$a_{i+1}+1$};
\end{tikzpicture}};
\endxy
\]
and
\[
\cal{F}_i \onel \{t\} \mapsto q^t
\xy
(0,0)*{
\begin{tikzpicture} [decoration={markings, mark=at position 0.6 with {\arrow{>}}; },scale=.75]
\draw [very thick] (0,0) -- (3,0);
\draw [very thick] (0,1) -- (3,1);
\draw [very thick, postaction={decorate}] (2,1) -- (1,0);
\node at (3.6,0) {$a_i$};
\node at (3.6,1) {$a_{i+1}$};
\node at (-1,0) {$a_{i} + 1$};
\node at (-1,1) {$a_{i+1} - 1$};
\end{tikzpicture}};
\endxy
\]
when the boundary values lie in $\{0,1,2,3\}$ and to the zero $1$-morphism otherwise.
Note that the
orientation of the undirected strands (and whether they become deleted) is determined by these
boundary values and that we have not depicted $m-2$ horizontal strands in each of the latter two formulae.

We will make use of a preparatory lemma to deduce the existence of the foamation 2-functors. Let the
images of $\onel$, $\cal{E}_i \onel$,  and $\cal{F}_i \onel$
given above be denoted $\1_{\lambda}$, $\E_i \1_{\lambda} $, and $\F_i \1_{\lambda} $.

\begin{lem}\label{lem_abstract3}
There are  isomorphisms
$$\F_i \E_i \1_{\l} \cong \E_i \F_i \1_{\l} \oplus_{[- \la i,\l \ra]} \1_\l \text{ if } \la i,\l \ra \le 0$$
$$\E_i \F_i \1_{\l} \cong \F_i \E_i \1_{\l} \oplus_{[\la i,\l \ra]} \1_\l \text{ if } \la i,\l \ra \ge 0,$$
and $\F_j \E_i \1_{\l} \cong \E_i \F_j \1_{\l}$ for $i \ne j \in I$ in $\foam{3}{m}$.
\end{lem}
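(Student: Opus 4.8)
The plan is to follow the template of the proof of Lemma~\ref{lem_abstract2}, verifying the three isomorphisms case-by-case. Since $\E_i$, $\F_i$, and $\1_\l$ act nontrivially only on the $i$-th and $(i+1)$-st strands, the images of $\F_i\E_i\1_\l$, $\E_i\F_i\1_\l$, and $\F_j\E_i\1_\l$ depend only on the boundary labels $a_i, a_{i+1} \in \{0,1,2,3\}$ of the sequence associated to $\l$, where $\la i,\l\ra = \l_i = a_{i+1} - a_i$. For the first isomorphism the hypothesis $\la i,\l\ra \le 0$ reads $a_{i+1} \le a_i$, and the predicted multiplicity is $[-\la i,\l\ra] = [a_i - a_{i+1}]$. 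First I would dispose of the cases $a_i = a_{i+1}$, where $[0]=0$ and the webs for $\F_i\E_i\1_\l$ and $\E_i\F_i\1_\l$ are either both zero (when $a_i \in \{0,3\}$) or isotopic (when $a_i \in \{1,2\}$), the isotopy being realized by an invertible foam.

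The remaining cases are organized by $d = a_i - a_{i+1} \in \{1,2,3\}$, which I would treat as the categorified analogs of the \emph{square}, \emph{bigon}, and \emph{circle} web relations from the introduction. Whenever $a_i = 3$ or $a_{i+1} = 0$ the composite $\E_i\F_i\1_\l$ maps to the zero $1$-morphism, so the task reduces to decomposing the image web of $\F_i\E_i\1_\l$ into $[d]$ shifted copies of $\1_\l$: the pairs $(a_i,a_{i+1}) = (1,0)$ and $(3,2)$ give a single copy via a cup-cap/isotopy foam; the pairs $(2,0)$ and $(3,1)$ produce a closed digon, removed using the bamboo relation~\eqref{sl3bamboo} to yield the two summands of $[2] = q+q^{-1}$; and $(3,0)$ produces a closed loop, which I would evaluate using the neck-cutting relation~\eqref{sl3neckcutting} together with the theta-foam relation~\eqref{sl3thetafoam} to extract the three summands of $[3] = q^2+1+q^{-2}$. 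The unique case in which both $\F_i\E_i\1_\l$ and $\E_i\F_i\1_\l$ are nonzero is $(a_i,a_{i+1}) = (2,1)$, where $d=1$; here I would exhibit the decomposition $\F_i\E_i\1_\l \cong \E_i\F_i\1_\l \oplus \1_\l$ directly from the airlock relation~\eqref{sl3airlock}, constructing the inclusion and projection foams and checking that they are mutually inverse via the local relations. The second isomorphism follows by the symmetric argument with the roles of $\E_i$ and $\F_i$ exchanged, and the mixed relation $\F_j\E_i\1_\l \cong \E_i\F_j\1_\l$ for $i \ne j$ is realized by an isotopy foam: for $|i-j|\ge 2$ the vertices lie on disjoint strands, and for $|i-j|=1$ they share one horizontal strand but can still be slid past one another.

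In each nonzero case I would record the grading shifts coming from the map on $1$-morphisms and the foam degree formula, so that the copies of $\1_\l$ appear in the exact degrees $q^{d-1}, q^{d-3}, \ldots, q^{1-d}$ dictated by $[d]$, not merely as an ungraded direct sum. The hard part will be the circle case $(3,0)$: unlike the bamboo and airlock relations, which are single local moves, extracting the $[3]$ summands requires the full neck-cutting relation~\eqref{sl3neckcutting} with all its dotted-sphere terms, careful bookkeeping of the theta-foam signs from~\eqref{sl3thetafoam} and~\eqref{sl3closedfoam}, and a check that the resulting cap/cup foams decompose the identity of the closed web as an orthogonal family of idempotents. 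A secondary subtlety is the $(2,1)$ square case, where both composites survive and one must confirm that the constructed foams genuinely split off the summand rather than merely matching degrees.
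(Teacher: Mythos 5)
Your proposal is correct and is exactly the approach the paper takes: its entire proof of this lemma is the single sentence ``The proof is similar to the proof of Lemma~\ref{lem_abstract2},'' i.e.\ the case-by-case analysis on $(a_i,a_{i+1})$ that you carry out in detail (your only debatable point is attributing the $(2,1)$ square decomposition to the airlock relation~\eqref{sl3airlock}, where the rocket relation~\eqref{sl3rocket} is the more natural tool, but the paper specifies no such details).
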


\begin{proof}
The proof is similar to the proof of Lemma~\ref{lem_abstract2}.
\end{proof}

\begin{prop} \label{prop_sl3}
For each $N>0$ there is a $2$-representation
$\Phi_3  \maps \Ucat_Q(\mf{sl}_m)  \to  \foam{3}{m}$ defined on single strand morphisms by:

\[
\Phi_3 \left(
 \xy 0;/r.17pc/:
 (0,7);(0,-7); **\dir{-} ?(.75)*\dir{>};
 (7,3)*{ \scs \lambda};
 (-2.5,-6)*{\scs i};
 (-10,0)*{};(10,0)*{};
 \endxy
\right) = \quad \Efoam[.5] \quad , \quad
\Phi_3 \left(
 \xy 0;/r.17pc/:
 (0,7);(0,-7); **\dir{-} ?(.75)*\dir{>};
 (0,0)*{\bullet};
 (7,3)*{ \scs \lambda};
 (-2.5,-6)*{\scs i};
 (-10,0)*{};(10,0)*{};
 \endxy
\right) = \quad \dotEfoam[.5]
\]
on crossings by:
\[
\qquad
\Phi_3 \left(
\xy 0;/r.20pc/:
	(0,0)*{\xybox{
    	(-4,-4)*{};(4,4)*{} **\crv{(-4,-1) & (4,1)}?(1)*\dir{>} ;
    	(4,-4)*{};(-4,4)*{} **\crv{(4,-1) & (-4,1)}?(1)*\dir{>};
    	(-5.5,-3)*{\scs i};
     	(5.5,-3)*{\scs i};
     	(9,1)*{\scs  \lambda};
     	(-10,0)*{};(10,0)*{};}};
\endxy
\right) = \quad \crossingEEfoam[.5]
\]
\[
\Phi_3 \left(
\xy 0;/r.20pc/:
	(0,0)*{\xybox{
    	(-4,-4)*{};(4,4)*{} **\crv{(-4,-1) & (4,1)}?(1)*\dir{>} ;
    	(4,-4)*{};(-4,4)*{} **\crv{(4,-1) & (-4,1)}?(1)*\dir{>};
    	(-5.5,-3)*{\scs i};
     	(5.5,-3)*{\scs \ \ \ i+1};
     	(9,1)*{\scs  \lambda};
     	(-10,0)*{};(10,0)*{};}};
\endxy
\right) = \quad \crossingEoneEtwofoam[.5] \quad , \qquad
\Phi_3 \left(
\xy 0;/r.20pc/:
	(0,0)*{\xybox{
    	(-4,-4)*{};(4,4)*{} **\crv{(-4,-1) & (4,1)}?(1)*\dir{>} ;
    	(4,-4)*{};(-4,4)*{} **\crv{(4,-1) & (-4,1)}?(1)*\dir{>};
    	(-5.5,-3)*{\scs i+1 \ \ \ };
     	(5.5,-3)*{\scs i};
     	(9,1)*{\scs  \lambda};
     	(-10,0)*{};(10,0)*{};}};
\endxy
\right) = (-1)^{a_{i+1}+1} \quad \crossingEtwoEonefoam[.5]
\]

\[
\Phi_3 \left(
\xy 0;/r.20pc/:
	(0,0)*{\xybox{
    	(-4,-4)*{};(4,4)*{} **\crv{(-4,-1) & (4,1)}?(1)*\dir{>} ;
    	(4,-4)*{};(-4,4)*{} **\crv{(4,-1) & (-4,1)}?(1)*\dir{>};
    	(-5.5,-3)*{\scs j};
     	(5.5,-3)*{\scs i};
     	(9,1)*{\scs  \lambda};
     	(-10,0)*{};(10,0)*{};}};
\endxy
\right) = \quad \crossingEthreeEonefoam[.5] \quad , \qquad
\Phi_3 \left(
\xy 0;/r.20pc/:
	(0,0)*{\xybox{
    	(-4,-4)*{};(4,4)*{} **\crv{(-4,-1) & (4,1)}?(1)*\dir{>} ;
    	(4,-4)*{};(-4,4)*{} **\crv{(4,-1) & (-4,1)}?(1)*\dir{>};
    	(-5.5,-3)*{\scs i};
     	(5.5,-3)*{\scs j};
     	(9,1)*{\scs  \lambda};
     	(-10,0)*{};(10,0)*{};}};
\endxy
\right) = \quad \crossingEoneEthreefoam[.5]
\]
where $j-i>1$, and on caps and cups by:
\[
\Phi_3 \left(
\xy 0;/r.20pc/:
	(0,0)*{\bbcef{i}};
	(8,4)*{\scs  \lambda};
	(-10,0)*{};(10,0)*{};
\endxy
\right) = \pm \quad \capFEfoam[.5] \quad , \qquad
\Phi_3 \left(
\xy 0;/r.20pc/:
	(0,0)*{\bbcfe{i}};
	(8,4)*{\scs  \lambda};
	(-10,0)*{};(10,0)*{};
\endxy
\right) = \pm \quad \capEFfoam[.5]
\]

\[
\Phi_3 \left(
\xy 0;/r.20pc/:
	(0,0)*{\bbpef{i}};
	(8,-4)*{\scs \lambda};
	(-10,0)*{};(10,0)*{};
\endxy
\right) = \pm \quad \cupFEfoam[.5] \quad , \qquad
\Phi_3 \left(
\xy 0;/r.20pc/:
	(0,0)*{\bbpfe{i}};
	(8,-4)*{\scs \lambda};
	(-10,0)*{};(10,0)*{};
\endxy
\right) = \pm \quad \cupEFfoam[.5]
\]
where the $\pm$ signs above depend on (the image of) the weight $\lambda$ and are given by\footnote{In fact, we will see that the $\pm$ signs involved in the definition of the $2$-functor on caps and cups
play no role in the proof of this proposition; they are determined by the proof of
Theorem \ref{thm_CL}. In the next section, we will give a topological interpretation of this system of signs.}:
\begin{equation}\label{sl3signs}
};
\endxy
\]
which is a saddle cobordism.
\begin{proof}
As with the proof of Proposition~\ref{prop_sl2}, we apply Theorem~\ref{thm_CL}. Conditions
\eqref{co:int} and \eqref{co:E} follow as before and Lemma~\ref{lem_abstract3} gives conditions
\eqref{co:EF} and \eqref{co:EiFj}.

Working in the setting where the $3$-, $4$-, and $5$-dotted
spheres are set equal to zero, it is shown in \cite{MorrisonNieh} that
the vector space $\Hom(\1_{\lambda}, q^t \1_{\lambda})$ is zero
for $t<0$ and one-dimensional for $t=0$ (provided $\1_{\lambda}$ is non-zero) and that
for any webs $W_1$ and $W_2$, the
vector space $\Hom(q^{t_1}W_1, q^{t_2}W_2)$ is finite dimensional.
The same arguments show that this holds when these dotted spheres are not set equal to zero
(since they have negative Euler characteristic). This confirms condition~\eqref{co:hom}.

We thus conclude by checking condition~\eqref{co:KLR}, the KLR relations:
\begin{itemize}
\item Relation \eqref{eq_nil_rels}: the $2$-morphism
$
\vcenter{
\xy 0;/r.17pc/:
	(-4,-4)*{};(4,4)*{} **\crv{(-4,-1) & (4,1)}?(1)*\dir{};
	(4,-4)*{};(-4,4)*{} **\crv{(4,-1) & (-4,1)}?(1)*\dir{};
	(-4,4)*{};(4,12)*{} **\crv{(-4,7) & (4,9)}?(1)*\dir{};
	(4,4)*{};(-4,12)*{} **\crv{(4,7) & (-4,9)}?(1)*\dir{};
	(-4,12); (-4,13) **\dir{-}?(1)*\dir{>};
	(4,12); (4,13) **\dir{-}?(1)*\dir{>};
	(6,8)*{\lambda};
\endxy}
$
maps to a foam which can only possibly be non-zero for $\lambda$ mapping to sequences with
$a_i=2,3$ and $a_{i+1}=0,1$. When $(a_i,a_{i+1}) = (2,0)$ we compute the image:
\[
\xy
(0,-9)*{
};
\endxy \quad
= 0
\]
which follows from neck-cutting in a neighborhood of the
center singular seam. The computation for the remainder of the values of
$(a_i,a_{i+1})$ follows similarly.

We next compute the images of the $2$-morphisms
$
\xy
(0,0)*{
\xy 0;/r.11pc/:
    (-4,-4)*{};(4,4)*{} **\crv{(-4,-1) & (4,1)}?(1)*\dir{};
    (4,-4)*{};(-4,4)*{} **\crv{(4,-1) & (-4,1)}?(1)*\dir{};
    (4,4)*{};(12,12)*{} **\crv{(4,7) & (12,9)}?(1)*\dir{};
    (12,4)*{};(4,12)*{} **\crv{(12,7) & (4,9)}?(1)*\dir{};
    (-4,12)*{};(4,20)*{} **\crv{(-4,15) & (4,17)}?(1)*\dir{};
    (4,12)*{};(-4,20)*{} **\crv{(4,15) & (-4,17)}?(1)*\dir{};
    (-4,4)*{}; (-4,12) **\dir{-};
    (12,-4)*{}; (12,4) **\dir{-};
    (12,12)*{}; (12,20) **\dir{-};
    (4,20); (4,21) **\dir{-}?(1)*\dir{>};
    (-4,20); (-4,21) **\dir{-}?(1)*\dir{>};
    (12,20); (12,21) **\dir{-}?(1)*\dir{>};
   (18,8)*{\scs \lambda};
\endxy}
\endxy
$ and
$
\xy
(0,0)*{
\xy 0;/r.11pc/:
    (4,-4)*{};(-4,4)*{} **\crv{(4,-1) & (-4,1)}?(1)*\dir{};
    (-4,-4)*{};(4,4)*{} **\crv{(-4,-1) & (4,1)}?(1)*\dir{};
    (-4,4)*{};(-12,12)*{} **\crv{(-4,7) & (-12,9)}?(1)*\dir{};
    (-12,4)*{};(-4,12)*{} **\crv{(-12,7) & (-4,9)}?(1)*\dir{};
    (4,12)*{};(-4,20)*{} **\crv{(4,15) & (-4,17)}?(1)*\dir{};
    (-4,12)*{};(4,20)*{} **\crv{(-4,15) & (4,17)}?(1)*\dir{};
    (4,4)*{}; (4,12) **\dir{-};
    (-12,-4)*{}; (-12,4) **\dir{-};
    (-12,12)*{}; (-12,20) **\dir{-};
    (4,20); (4,21) **\dir{-}?(1)*\dir{>};
    (-4,20); (-4,21) **\dir{-}?(1)*\dir{>};
    (-12,20); (-12,21) **\dir{-}?(1)*\dir{>};
  (10,8)*{\scs \lambda};
\endxy}
\endxy
$, noting that the images can only be non-zero for $\lambda$ mapping to sequences with $a_i=3$ and $a_{i+1}=0$.
The above $2$-morphisms map to the foams
\[
\xy
(0,-22.5)*{
};
\endxy
\]
respectively; these are equal by equation \eqref{sl3airlock}. \\

\item Relation \eqref{eq_nil_dotslide}: the images of this relation are simply a restatement of equation \eqref{sl3tube}. \\

\item Relation \eqref{eq_r2_ij-gen}: The equality $(\alpha_i,\alpha_j)=0$ corresponds to $|i-j|\geq 2$ in which case the
image of the relation is realized via isotopy. For example, when $i<j$ we see that
\[
\xy
(0,-10)*{\crossingEoneEthreefoam[.5]};
(0,10)*{\crossingEthreeEonefoam[.5]};
\endxy
\]
is isotopic to the identity foam.

When $i\neq j$ and $(\alpha_i,\alpha_j)\neq 0$ we must have $j=i\pm1$. We'll compute
the image of this relationship in the case $j=i+1$ (the other case is similar). The image of the left-hand side is zero
when $a_{i+1} = 0,3$ since the intermediate objects in the relation map to the zero object in the image; the same is true
on the right-hand side when $a_{i+1}=0$. When $a_{i+1} = 3$, both $2$-morphisms involved  in the expression
on the right-hand side map to
the same foam, so the relation is satisfied since $t_{i,i+1} = - t_{i+1,i}$. When $a_{i+1}=1$ we have
\[
\vcenter{\xy 0;/r.17pc/:
    (-4,-4)*{};(4,4)*{} **\crv{(-4,-1) & (4,1)}?(1)*\dir{};
    (4,-4)*{};(-4,4)*{} **\crv{(4,-1) & (-4,1)}?(1)*\dir{};
    (-4,4)*{};(4,12)*{} **\crv{(-4,7) & (4,9)}?(1)*\dir{};
    (4,4)*{};(-4,12)*{} **\crv{(4,7) & (-4,9)}?(1)*\dir{};
    (8,8)*{\lambda};
    (4,12); (4,13) **\dir{-}?(1)*\dir{>};
    (-4,12); (-4,13) **\dir{-}?(1)*\dir{>};
  (-5,-3)*{\scs i};
     (5.1,-3)*{\scs \ \ \ i+1};
 \endxy} \quad
\mapsto \quad
\xy
(0,-8)*{

};
\endxy
\]
which is the image of
$\vcenter{\xy 0;/r.11pc/:
  (3,-9);(3,9) **\dir{-}?(1)*\dir{>}+(2.3,0)*{};
  (-3,-9);(-3,9) **\dir{-}?(1)*\dir{>}+(2.3,0)*{};
  (-3,0)*{\bullet};(-6.5,5)*{};
  (-5,-6)*{\scs i};     (5.1,-6)*{\scs \ \ \ i+1};
 \endxy} \;\; - \;\;
  \vcenter{\xy 0;/r.11pc/:
  (3,-9);(3,9) **\dir{-}?(1)*\dir{>}+(2.3,0)*{};
  (-3,-9);(-3,9) **\dir{-}?(1)*\dir{>}+(2.3,0)*{};
  (3,0)*{\bullet};(7,5)*{};
  (-5,-6)*{\scs i};     (5.1,-6)*{\scs \ \ \ i+1};
 \endxy}
 $. Finally, when $a_{i+1}=2$
\[
\vcenter{\xy 0;/r.17pc/:
    (-4,-4)*{};(4,4)*{} **\crv{(-4,-1) & (4,1)}?(1)*\dir{};
    (4,-4)*{};(-4,4)*{} **\crv{(4,-1) & (-4,1)}?(1)*\dir{};
    (-4,4)*{};(4,12)*{} **\crv{(-4,7) & (4,9)}?(1)*\dir{};
    (4,4)*{};(-4,12)*{} **\crv{(4,7) & (-4,9)}?(1)*\dir{};
    (8,8)*{\lambda};
    (4,12); (4,13) **\dir{-}?(1)*\dir{>};
    (-4,12); (-4,13) **\dir{-}?(1)*\dir{>};
  (-5,-3)*{\scs i};
     (5.1,-3)*{\scs \ \ \ i+1};
 \endxy} \quad
\mapsto  - \quad
\xy
(0,-8)*{

};
\endxy
\]
which again is the image of
$\vcenter{\xy 0;/r.11pc/:
  (3,-9);(3,9) **\dir{-}?(1)*\dir{>}+(2.3,0)*{};
  (-3,-9);(-3,9) **\dir{-}?(1)*\dir{>}+(2.3,0)*{};
  (-3,0)*{\bullet};(-6.5,5)*{};
  (-5,-6)*{\scs i};     (5.1,-6)*{\scs \ \ \ i+1};
 \endxy} \;\; - \;\;
  \vcenter{\xy 0;/r.11pc/:
  (3,-9);(3,9) **\dir{-}?(1)*\dir{>}+(2.3,0)*{};
  (-3,-9);(-3,9) **\dir{-}?(1)*\dir{>}+(2.3,0)*{};
  (3,0)*{\bullet};(7,5)*{};
  (-5,-6)*{\scs i};     (5.1,-6)*{\scs \ \ \ i+1};
 \endxy}
$. \\

\item Relation \eqref{eq_dot_slide_ij-gen}: These hold by sliding a dot along a facet, i.e. via isotopy. \\

\item Relation \eqref{eq_r3_easy-gen}: For all choices of $i$, $j$, and $k$ this relation holds via isotopy. This
is obvious whenever one of the strands carries a label which is at least $2$ bigger or smaller than both
other labels. In the remaining cases a computation is necessary; we'll exhibit this only for two cases, since
the remaining cases follow similarly.

First, suppose that $j=i$ and $k=i+1$, then both sides of the relation automatically map to zero unless
$\lambda$ maps to a sequence with $a_{i+1}=1$.
We hence compute the image (of both sides of the relation) in this case, finding them to be
\[
\xy
(0,-22.5)*{
};
\endxy
\]
which are equal up to isotopy. \\

\item Relation \eqref{eq_r3_hard-gen}: We must have $j=i\pm1$ and we'll only compute for $j=i+1$ since
the other case is analogous. Note that all $2$-morphisms involved automatically map to zero if
$\lambda$ is sent to a sequence with $a_{i+1}=3$ or with $a_i=0,1$, so we'll compute for the remaining values.

First, let $a_{i+1}=0$, then
\[
 \vcenter{
 \xy 0;/r.15pc/:
    (-4,-4)*{};(4,4)*{} **\crv{(-4,-1) & (4,1)}?(1)*\dir{};
    (4,-4)*{};(-4,4)*{} **\crv{(4,-1) & (-4,1)}?(1)*\dir{};
    (4,4)*{};(12,12)*{} **\crv{(4,7) & (12,9)}?(1)*\dir{};
    (12,4)*{};(4,12)*{} **\crv{(12,7) & (4,9)}?(1)*\dir{};
    (-4,12)*{};(4,20)*{} **\crv{(-4,15) & (4,17)}?(1)*\dir{};
    (4,12)*{};(-4,20)*{} **\crv{(4,15) & (-4,17)}?(1)*\dir{};
    (-4,4)*{}; (-4,12) **\dir{-};
    (12,-4)*{}; (12,4) **\dir{-};
    (12,12)*{}; (12,20) **\dir{-};
  (-4,20); (-4,21) **\dir{-}?(1)*\dir{>};
  (4,20); (4,21) **\dir{-}?(1)*\dir{>};
  (12,20); (12,21) **\dir{-}?(1)*\dir{>};
  (-4,-6)*{\scs i};
  (4,-6)*{\scs i+1};
  (12,-6)*{\scs i};
\endxy} \mapsto \quad
\xy
(0,-19.5)*{
};
\endxy
\qquad
\qquad
\quad
 \vcenter{
 \xy 0;/r.15pc/:
    (4,-4)*{};(-4,4)*{} **\crv{(4,-1) & (-4,1)}?(1)*\dir{};
    (-4,-4)*{};(4,4)*{} **\crv{(-4,-1) & (4,1)}?(1)*\dir{};
    (-4,4)*{};(-12,12)*{} **\crv{(-4,7) & (-12,9)}?(1)*\dir{};
    (-12,4)*{};(-4,12)*{} **\crv{(-12,7) & (-4,9)}?(1)*\dir{};
    (4,12)*{};(-4,20)*{} **\crv{(4,15) & (-4,17)}?(1)*\dir{};
    (-4,12)*{};(4,20)*{} **\crv{(-4,15) & (4,17)}?(1)*\dir{};
    (4,4)*{}; (4,12) **\dir{-};
    (-12,-4)*{}; (-12,4) **\dir{-};
    (-12,12)*{}; (-12,20) **\dir{-};
  (-4,20); (-4,21) **\dir{-}?(1)*\dir{>};
  (4,20); (4,21) **\dir{-}?(1)*\dir{>};
  (-12,20); (-12,21) **\dir{-}?(1)*\dir{>};
  (-12,-6)*{\scs i};
  (-4,-6)*{\scs i+1};
  (4,-6)*{\scs i};
\endxy} \mapsto  0.
\]
This confirms the relation since the former is isotopic to the identity foam when $a_i = 2,3$,
noting that in these cases either the leftmost or rightmost front facet is deleted.

Next, let $a_{i+1}=1$, then we compute
\[
 \vcenter{
 \xy 0;/r.15pc/:
    (-4,-4)*{};(4,4)*{} **\crv{(-4,-1) & (4,1)}?(1)*\dir{};
    (4,-4)*{};(-4,4)*{} **\crv{(4,-1) & (-4,1)}?(1)*\dir{};
    (4,4)*{};(12,12)*{} **\crv{(4,7) & (12,9)}?(1)*\dir{};
    (12,4)*{};(4,12)*{} **\crv{(12,7) & (4,9)}?(1)*\dir{};
    (-4,12)*{};(4,20)*{} **\crv{(-4,15) & (4,17)}?(1)*\dir{};
    (4,12)*{};(-4,20)*{} **\crv{(4,15) & (-4,17)}?(1)*\dir{};
    (-4,4)*{}; (-4,12) **\dir{-};
    (12,-4)*{}; (12,4) **\dir{-};
    (12,12)*{}; (12,20) **\dir{-};
  (-4,20); (-4,21) **\dir{-}?(1)*\dir{>};
  (4,20); (4,21) **\dir{-}?(1)*\dir{>};
  (12,20); (12,21) **\dir{-}?(1)*\dir{>};
  (-4,-6)*{\scs i};
  (4,-6)*{\scs i+1};
  (12,-6)*{\scs i};
\endxy} \mapsto - \quad
\xy
(0,-19.5)*{
};
\endxy
\qquad
\quad
 \vcenter{
 \xy 0;/r.15pc/:
    (4,-4)*{};(-4,4)*{} **\crv{(4,-1) & (-4,1)}?(1)*\dir{};
    (-4,-4)*{};(4,4)*{} **\crv{(-4,-1) & (4,1)}?(1)*\dir{};
    (-4,4)*{};(-12,12)*{} **\crv{(-4,7) & (-12,9)}?(1)*\dir{};
    (-12,4)*{};(-4,12)*{} **\crv{(-12,7) & (-4,9)}?(1)*\dir{};
    (4,12)*{};(-4,20)*{} **\crv{(4,15) & (-4,17)}?(1)*\dir{};
    (-4,12)*{};(4,20)*{} **\crv{(-4,15) & (4,17)}?(1)*\dir{};
    (4,4)*{}; (4,12) **\dir{-};
    (-12,-4)*{}; (-12,4) **\dir{-};
    (-12,12)*{}; (-12,20) **\dir{-};
  (-4,20); (-4,21) **\dir{-}?(1)*\dir{>};
  (4,20); (4,21) **\dir{-}?(1)*\dir{>};
  (-12,20); (-12,21) **\dir{-}?(1)*\dir{>};
  (-12,-6)*{\scs i};
  (-4,-6)*{\scs i+1};
  (4,-6)*{\scs i};
\endxy} \mapsto \quad
\xy
(0,-19.5)*{
};
\endxy \quad ;
\]
the relation then follows from equation \eqref{sl3rocket} when $a_i= 2,3$, again noting that either the leftmost or
rightmost front facet is deleted for both foams.

Finally, if $a_{i+1}=2$, then
\[
 \vcenter{
 \xy 0;/r.15pc/:
    (-4,-4)*{};(4,4)*{} **\crv{(-4,-1) & (4,1)}?(1)*\dir{};
    (4,-4)*{};(-4,4)*{} **\crv{(4,-1) & (-4,1)}?(1)*\dir{};
    (4,4)*{};(12,12)*{} **\crv{(4,7) & (12,9)}?(1)*\dir{};
    (12,4)*{};(4,12)*{} **\crv{(12,7) & (4,9)}?(1)*\dir{};
    (-4,12)*{};(4,20)*{} **\crv{(-4,15) & (4,17)}?(1)*\dir{};
    (4,12)*{};(-4,20)*{} **\crv{(4,15) & (-4,17)}?(1)*\dir{};
    (-4,4)*{}; (-4,12) **\dir{-};
    (12,-4)*{}; (12,4) **\dir{-};
    (12,12)*{}; (12,20) **\dir{-};
  (-4,20); (-4,21) **\dir{-}?(1)*\dir{>};
  (4,20); (4,21) **\dir{-}?(1)*\dir{>};
  (12,20); (12,21) **\dir{-}?(1)*\dir{>};
  (-4,-6)*{\scs i};
  (4,-6)*{\scs i+1};
  (12,-6)*{\scs i};
\endxy} \mapsto 0
\qquad
\qquad
 \vcenter{
 \xy 0;/r.15pc/:
    (4,-4)*{};(-4,4)*{} **\crv{(4,-1) & (-4,1)}?(1)*\dir{};
    (-4,-4)*{};(4,4)*{} **\crv{(-4,-1) & (4,1)}?(1)*\dir{};
    (-4,4)*{};(-12,12)*{} **\crv{(-4,7) & (-12,9)}?(1)*\dir{};
    (-12,4)*{};(-4,12)*{} **\crv{(-12,7) & (-4,9)}?(1)*\dir{};
    (4,12)*{};(-4,20)*{} **\crv{(4,15) & (-4,17)}?(1)*\dir{};
    (-4,12)*{};(4,20)*{} **\crv{(-4,15) & (4,17)}?(1)*\dir{};
    (4,4)*{}; (4,12) **\dir{-};
    (-12,-4)*{}; (-12,4) **\dir{-};
    (-12,12)*{}; (-12,20) **\dir{-};
  (-4,20); (-4,21) **\dir{-}?(1)*\dir{>};
  (4,20); (4,21) **\dir{-}?(1)*\dir{>};
  (-12,20); (-12,21) **\dir{-}?(1)*\dir{>};
  (-12,-6)*{\scs i};
  (-4,-6)*{\scs i+1};
  (4,-6)*{\scs i};
\endxy} \mapsto - \quad
\xy
(0,-19.5)*{
};
\endxy \quad ;
\]
again, this confirms the relation since the latter is the identity foam when $a_i=2,3$.
\end{itemize}
\end{proof}

%
\subsubsection{Enhanced $\mathfrak{sl}_3$ foams}
%


We now aim to better explain the signs in \eqref{sl3signs} which give the scalings of the cap and
cup $2$-morphisms.
We take inspiration from Blanchet's $\mathfrak{sl}_2$ foam construction in which the edges of
webs labeled by $2$ and the corresponding facets of foams play a special role (and in particular are
not deleted).

We hence define an $\mathfrak{sl}_3$ foam $2$-category in which we retain $3$-labeled edges and the
corresponding $3$-labeled facets. Although such a construction is not suggested at the decategorified level
(as it was in the $\mathfrak{sl}_2$ case)
we will see that the foamation functor is much more natural to define in this context and
that an appropriately defined functor which forgets the $3$-labeled data gives a topological interpretation of the
scalings. We believe that such $n$-labeled facets will play a role in extending the work in this paper to the
$n\geq 4$ case; this will be the subject of a follow-up paper \cite{LQR2}.

%

\begin{defn}
$\Bfoam{3}{m}$ is the 2-category defined as follows:
\begin{itemize}
\item Objects are sequences $(a_1,\ldots,a_m)$ labeling points in the interval $[0,1]$ with
$a_i \in \{0,1,2,3\}$ and $N = \sum_{i=1}^m a_i$ together with a zero object.
\item $1$-morphisms are formal direct sums of $\Z$-graded enhanced $\mathfrak{sl}_3$ webs -
trivalent planar graphs with boundary with edges of two types: directed edges
$ \; \;
\xy
(0,0)*{
\begin{tikzpicture} [fill opacity=0.2,  decoration={markings,
                        mark=at position 0.6 with {\arrow{>}};    }]
\draw[very thick, postaction={decorate}] (0,-.5) -- (0,.5);
\end{tikzpicture}};
\endxy
\; \;
$
and undirected ``$3$-labeled" edges
$ \; \;
\xy
(0,0)*{
\begin{tikzpicture} [fill opacity=0.2,  decoration={markings,
                        mark=at position 0.6 with {\arrow{>}};    }]
\draw[double] (0,-.5) -- (0,.5);
\end{tikzpicture}};
\endxy
\; \;
$
where vertices involving only the directed edges are as in $3\cat{Foam}$ and vertices involving the
$3$-labeled edges take the form
\[
\xy
(0,0)*{
\begin{tikzpicture} [fill opacity=0.2,  decoration={markings,
                        mark=at position 0.6 with {\arrow{>}};    }]
\draw[double] (0,0) -- (0,1);
\draw[very thick, postaction={decorate}] (-.875,-.5) -- (0,0);
\draw[very thick, postaction={decorate}] (0,0) -- (.875,-.5);
\end{tikzpicture}};
\endxy
\qquad
\text{ or }
\qquad
\xy
(0,0)*{
\begin{tikzpicture} [fill opacity=0.2,  decoration={markings,
                        mark=at position 0.6 with {\arrow{>}};    }]
\draw[double] (0,0) -- (0,1);
\draw[very thick, postaction={decorate}] (.875,-.5) -- (0,0);
\draw[very thick, postaction={decorate}] (0,0) -- (-.875,-.5);
\end{tikzpicture}};
\endxy
\quad .
\]
Oriented edges are directed out from points labeled by $1$ and into points labeled
by $2$ in the domain and vice-versa in the codomain and $3$-labeled edges are attached to
points labeled by $3$ in both the domain and codomain. As in $\foam{3}{m}$, no edges are attached to
points labeled by $0$.


\item $2$-morphisms are $\mathfrak{sl}_3$ foams between such webs where we allow additional $3$-labeled
facets incident upon the $3$-labeled strands of the webs and attached to the remainder of the foam
along singular seams which are allowed to intersect the traditional singular seams; the $3$-labeled facets are not
allowed to carry dots. We impose local relations on these foams.
\end{itemize}
\end{defn}


We shall refer to the $2$-morphisms in this category as ``Blanchet" $\mathfrak{sl}_3$ foams and depict the
$3$-labeled facets in yellow. The traditional facets of these foams will continue to be depicted in both
red and blue.

The local relations are given by the relations in $3\cat{Foam}$ in regions where $3$-labeled facets are
not present with additional relations for the $3$-labeled facets.
The seams where a $3$-labeled facet meet the traditional facets are allowed to move freely on the
foam (relative to the points where such seams meet the web vertices depicted above).
We impose a strong neck-cutting relation for these facets:


\[
};
\endxy
\quad .
\end{equation}
An Euler characteristic argument shows that these relations are consistent. Using such foams, we have the
following result.

\begin{prop} \label{prop_sl3_enhanced}
The definition in Proposition \ref{prop_sl2} describes a family of $2$-functors
$\Ucat_Q(\mathfrak{sl}_m) \to \Bfoam{3}{m}$.
\end{prop}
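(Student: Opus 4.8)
The plan is to prove the proposition exactly as in Proposition~\ref{prop_sl2}, by applying the Cautis--Lauda criterion of Theorem~\ref{thm_CL} to the target $2$-category $\Bfoam{3}{m}$ and verifying its six hypotheses for the assignment $\lambda \mapsto (a_1,\ldots,a_m)$ together with the generating $2$-morphisms specified by the formulas of Proposition~\ref{prop_sl2}, now read as enhanced $\mathfrak{sl}_3$ foams in which the $3$-labeled (yellow) facets play the role of the $2$-labeled facets of the Blanchet $\mathfrak{sl}_2$ construction. Conditions~\eqref{co:int} and~\eqref{co:E} are immediate and require no change: for fixed $m$ and $N$ the nonzero objects are indexed by the nonzero $\mathfrak{sl}_m$ weight spaces of the finite-dimensional module $\bigwedge_q^N\left(\C^3 \otimes \C^m\right)$, so $\cal{R}(\lambda + r\alpha_i)$ vanishes for $|r| \gg 0$, and each $\E_i \1_\lambda$ visibly admits $\F_i \1_{\lambda+\alpha_i}$ as a two-sided adjoint up to grading shift via the evident cup--cap web isotopies.

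Next I would establish conditions~\eqref{co:EF} and~\eqref{co:EiFj} through an enhanced analogue of Lemma~\ref{lem_abstract3}, proving the $\E\F$/$\F\E$ decomposition and the $i \ne j$ commutation isomorphism by the same case analysis on the boundary values $a_i, a_{i+1} \in \{0,1,2,3\}$. The new ingredient relative to the unenhanced $\mathfrak{sl}_3$ lemma is that the cases in which an entry reaches the extreme value $3$ now produce webs and foams carrying $3$-labeled facets; these are handled using the strong neck-cutting relation for $3$-labeled facets together with relation~\eqref{3Btube}, precisely as the $2$-labeled analogues \eqref{sl2neckcutting_enh_2lab} and \eqref{sl2NH_enh} were used in Lemma~\ref{lem_abstract2}. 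For condition~\eqref{co:hom} I would run the neck-cutting argument of the proof of Proposition~\ref{prop_sl3}: cutting necks near each boundary component expresses any foam in $\Hom(\1_\lambda, q^t\1_\lambda)$ as a combination of disjoint closed foams and boundary-incident sheets, and any closed Blanchet foam reduces to a scalar by first stripping all closed $3$-labeled facets via deletion at the cost of $(-1)^{\chi}$ and then evaluating the resulting ordinary closed $\mathfrak{sl}_3$ foam through \eqref{sl3closedfoam}--\eqref{sl3thetafoam}. The degree formula then forces $\Hom(\1_\lambda, q^t\1_\lambda)$ to vanish for $t<0$ and to be one-dimensional for $t=0$, and bounds the number of admissible seam configurations so as to give finite-dimensionality of all $2$-morphism spaces.

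The substance of the proof is condition~\eqref{co:KLR}, the KLR relations, which I would check by the same case-by-case foam computations carried out in the proof of Proposition~\ref{prop_sl2}. For each relation the image is nonzero only for finitely many patterns of $(a_i,a_{i+1})$ (or $(a_i,a_{i+1},a_{i+2})$ for the cubic relations), and in every case the computation is formally identical to its Blanchet $\mathfrak{sl}_2$ counterpart once the $2$-labeled facets there are replaced by $3$-labeled facets here: the nilHecke dot-slide \eqref{eq_nil_dotslide} reduces to \eqref{3Btube}, and the hard Reidemeister-III relation \eqref{eq_r3_hard-gen} reduces to the $3$-labeled strong neck-cutting relation together with \eqref{3Btube}. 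Because the enhanced $3$-labeled relations are the sign-matched analogues of the enhanced $2$-labeled relations, the simple sign assignments inherited from Proposition~\ref{prop_sl2} on caps, cups, and crossings remain consistent, in contrast with the complicated table \eqref{sl3signs} needed for the unenhanced category $\foam{3}{m}$.

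I expect the main obstacle to be the bookkeeping in condition~\eqref{co:KLR} for the cases where $3$-labeled facets genuinely appear, namely when a boundary entry equals $3$: one must confirm that the yellow-facet relations produce exactly the same signs as the $2$-labeled computations of Proposition~\ref{prop_sl2}, so that no rescaling of the generators is forced. A secondary point requiring care is checking that the deletion rule $(-1)^{\chi(F)}$ for closed $3$-labeled facets and the strong neck-cutting relation are mutually consistent (an Euler-characteristic computation, as asserted in the definition) and compatible with the degree formula, so that the closed-foam evaluation used in condition~\eqref{co:hom} is well defined. Once these are in hand, the \emph{mutatis mutandis} passage from Proposition~\ref{prop_sl2} is complete and Theorem~\ref{thm_CL} yields the asserted family of $2$-functors into $\Bfoam{3}{m}$.
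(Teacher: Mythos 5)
Your proposal follows essentially the same route as the paper, which simply states that the proof "follows as in the proofs of Propositions \ref{prop_sl2} and \ref{prop_sl3}," i.e.\ by applying Theorem \ref{thm_CL} and rechecking its hypotheses with the $3$-labeled facets playing the role of the $2$-labeled facets of $\Bfoam{2}{m}$. Your roadmap fills in exactly the details the paper leaves implicit (the only minor imprecision being that the dot-slide computation rests on \eqref{sl3tube} and the strong neck-cutting relation rather than on the undotted relation \eqref{3Btube} alone), so no further comparison is needed.
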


As before, we view the definition as showing the general image of each generating $2$-morphism; edges
connected to points labeled by $0$ and facets incident upon them are understood to be deleted.
The proof of this proposition follows as in the proof of Propositions \ref{prop_sl2} and \ref{prop_sl3}.

The relations for the $3$-labeled facets allow us to delete any facet which does not meet the boundary;
however, this is not enough to define a forgetful functor $\Bfoam{3}{m} \to \foam{3}{m}$.
We can give such a rule by taking into account the boundary data.


Given a Blanchet $\mf{sl}_3$ foam $F$, define $\chi_3(F)$ to be the Euler
characteristic of its $3$-labeled facets and let
$n_u(F)$ denote the number of $3$-labeled edges in the codomain of $F$. Let
$\Psi(F) = (-1)^{\chi_3(F)-n_u(F)} \bar{F}$ where $\bar{F}$ is the $\mathfrak{sl}_3$ foam
obtained from $F$ by deleting the $3$-labeled facets (and the $3$-labeled edges from the boundary webs).
Similarly, define $n_b(F)$ to be the number of $3$-labeled edges in the domain of $F$
and $n_l(F)$ and $n_r(F)$ to be the number of points labeled by $3$ in the codomain and
domain (respectively) of the webs between which $F$ maps; of course, these later two denote the number of
$3$-labeled vertical intervals on the left and right boundary of $F$.


\begin{prop}\label{sl3_forgetful}
The assignment $F \mapsto \Psi(F)$ defines a $2$-functor $\Bfoam{3}{m} \to \foam{3}{m}$ where
objects are sent to themselves and enhanced webs are sent to the webs obtained by deleting the
$3$-labeled edges.
\end{prop}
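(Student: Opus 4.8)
The plan is to verify the three conditions that make an assignment on objects, $1$-morphisms, and $2$-morphisms into a graded additive $\Bbbk$-linear $2$-functor: that the underlying assignment $F\mapsto\bar F$ is well defined into $\foam{3}{m}$, that the scalar $(-1)^{\chi_3(F)-n_u(F)}$ is multiplicative under both vertical and horizontal composition (and trivial on identities), and that $\Psi$ carries each defining relation of $\Bfoam{3}{m}$ to a valid identity in $\foam{3}{m}$. Since $\Psi$ is declared $\Bbbk$-linear, it suffices to treat single foams throughout. First I would record that deleting the $3$-labeled edges of an enhanced web yields an honest $\mathfrak{sl}_3$ web (the $3$-labeled vertices disappear, the two incident directed edges joining) and that deleting the $3$-labeled facets of a Blanchet foam, together with those edges, yields a genuine $\mathfrak{sl}_3$ foam between the resulting webs; this makes $\bar F$, and hence $\Psi(F)$, well defined, and also gives $\overline{G\circ F}=\bar G\circ\bar F$ and $\overline{F*G}=\bar F*\bar G$.

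The core computation is the multiplicativity of the sign, which I would establish by inclusion--exclusion for the Euler characteristic. For a vertical composite $G\circ F$ with $F\colon W_1\to W_2$ and $G\colon W_2\to W_3$, the $3$-facets of $F$ and $G$ are glued exactly along the $3$-labeled edges of $W_2$; as each trivalent vertex meets at most one $3$-labeled edge these are disjoint arcs, so the gluing locus has Euler characteristic $n_u(F)=n_b(G)$. Hence
$$\chi_3(G\circ F)=\chi_3(F)+\chi_3(G)-n_u(F), \qquad n_u(G\circ F)=n_u(G),$$
from which $\chi_3(G\circ F)-n_u(G\circ F)=\big(\chi_3(F)-n_u(F)\big)+\big(\chi_3(G)-n_u(G)\big)$, the intermediate term cancelling precisely because the correction uses the \emph{codomain} count $n_u$. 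For a horizontal composite $F*G$ sharing an object $b$ with $n_3(b)$ points labeled $3$, the $3$-facets glue along the $n_3(b)$ vertical boundary intervals over $b$, while the $3$-edges of the two codomain webs merge in pairs at those points, giving
$$\chi_3(F*G)=\chi_3(F)+\chi_3(G)-n_3(b), \qquad n_u(F*G)=n_u(F)+n_u(G)-n_3(b);$$
again the $n_3(b)$ terms cancel, yielding additivity of $\chi_3-n_u$. Together with the two identities for $\bar{\phantom{F}}$, this shows $\Psi$ respects both compositions, and since the identity foam on a web $W$ has $3$-facets a disjoint union of rectangles with $\chi_3=n_u=n_3(W)$, it sends identities to identities.

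Finally I would check the relations. The relations of $\Bfoam{3}{m}$ inherited from $3\cat{Foam}$ occur in regions free of $3$-facets, so they leave both $\chi_3$ and $n_u$ untouched, the common prefactor of $\Psi$ cancels, and each reduces to the corresponding relation in $\foam{3}{m}$. For the facet-deletion rule, removing a closed $3$-facet $F_0$ changes $\chi_3$ by $\chi(F_0)$ and leaves $n_u$ and $\bar F$ unchanged, so $\Psi$ reproduces the prescribed $(-1)^{\chi(F_0)}$. For the strong neck-cutting relation, cutting a $3$-labeled tube into a cap and a cup raises $\chi_3$ by $2$ without changing $n_u$, so the sign is unchanged, matching the coefficient $+1$, while both sides have identical $3$-facet-deleted foams. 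For \eqref{3Btube}, cutting raises $\chi_3$ by $1$ with $n_u$ fixed, so $\Psi$ introduces exactly the sign $(-1)$ appearing in the relation, provided the $3$-facet-deleted configurations agree in $\foam{3}{m}$.

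The main obstacle I expect is this last point: verifying that \eqref{3Btube} is respected requires not only the parity bookkeeping but also confirming that the underlying foams $\bar L$ and $\bar R$ coincide in $\foam{3}{m}$ — either by an explicit isotopy or by invoking the tube relation \eqref{sl3tube} — together with a careful treatment of the Euler-characteristic inclusion--exclusion at the boundary (the disjointness of $3$-edges at trivalent vertices, the handling of any closed $3$-components, and the asymmetric role of the codomain count $n_u$). These geometric verifications, rather than the formal functoriality, are where the real work lies.
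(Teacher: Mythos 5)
Your proposal is correct and follows essentially the same route as the paper: the heart of the argument there is exactly your inclusion--exclusion computation showing that $\chi_3-n_u$ is additive under vertical composition (where the gluing locus has Euler characteristic $n_u(F)=n_b(G)$ and the codomain count is unaffected) and under horizontal composition (where the $n_l$-terms cancel between $\chi_3$ and $n_u$). The additional checks you carry out -- well-definedness of $\bar F$, preservation of identities, and compatibility with the local relations including \eqref{3Btube} -- are left implicit in the paper, so including them only strengthens the argument.
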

\begin{proof}
It suffices to show that $\Psi$ is compatible with horizontal and vertical composition of foams.
To this end,
consider foams $F_1, F_2,$ and $F_3$ which can be composed as indicated by the following schematic:
\[
 \xy
(0,0)*{
\begin{tikzpicture} [scale=0.8]
  \draw (0,0) rectangle (1,1);
  \draw (0,1) rectangle (1,2);
  \draw (-1,0) rectangle (0,1);
  \node at (0.5,0.5) {$F_1$};
   \node at (0.5,1.5) {$F_2$};
  \node at (-0.5,0.5) {$F_3$};
\end{tikzpicture}};
\endxy \quad .
\]

We have
\begin{align*}
\chi_3(F_1 \cup F_2) - n_u(F_1 \cup F_2) &= \chi_3(F_1 \cup F_2) - n_u(F_2) \\
	&= \chi_3(F_1) - n_u(F_1) + \chi_3(F_2) - n_u(F_2)
\end{align*}
and
\begin{align*}
\chi_3(F_1 \cup F_3) - n_u(F_1 \cup F_3) & = \chi_3(F_1) - n_l(F_1) + \chi_3(F_3)
		- n_u(F_1) + n_l(F_1) - n_u(F_3) \\
	&= \chi_3(F_1) - n_u(F_1) + \chi_3(F_3) - n_u(F_3)
\end{align*}
which gives the result.
\end{proof}

One can now consider the composition of the $2$-functors defined in Propositions
\ref{prop_sl3_enhanced} and \ref{sl3_forgetful}.
\begin{prop}
The composition $\Ucat_Q(\mathfrak{sl}_m) \to \Bfoam{3}{m} \to \foam{3}{m}$ gives the $2$-functor
from Proposition \ref{prop_sl3}.
\end{prop}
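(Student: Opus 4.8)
Both $\Phi_3$ (Proposition \ref{prop_sl3}) and the composite $\Psi \circ \Phi_3^B$, where $\Phi_3^B \maps \Ucat_Q(\mathfrak{sl}_m) \to \Bfoam{3}{m}$ is the enhanced foamation $2$-functor of Proposition \ref{prop_sl3_enhanced} and $\Psi \maps \Bfoam{3}{m} \to \foam{3}{m}$ is the forgetful $2$-functor of Proposition \ref{sl3_forgetful}, are graded additive $\Bbbk$-linear $2$-functors out of $\Ucat_Q(\mathfrak{sl}_m)$. Since the latter is generated (as such a $2$-category) by its objects, the $1$-morphisms $\cal{E}_i \onel, \cal{F}_i \onel$, and the generating $2$-morphisms (dots, crossings, cups and caps), and since $\Psi$ and $\Phi_3^B$ are already known to respect horizontal and vertical composition, it suffices to check that $\Psi \circ \Phi_3^B$ and $\Phi_3$ agree on this generating data. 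On objects the claim is immediate: all three functors send $\lambda$ to the same sequence $(a_1,\ldots,a_m)$, and $\Psi$ fixes objects. On $1$-morphisms, $\Phi_3^B$ produces enhanced $\mathfrak{sl}_3$ webs and $\Psi$ deletes exactly the $3$-labeled edges, returning precisely the (oriented, $3$-edge-free) webs used by $\Phi_3$; the orientation conventions match by construction, so the plan begins by recording this.

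\textbf{Dots and the uniform crossings.} The first computational step is to observe that for the dot $2$-morphisms and for the $(i,i)$-, $(i,i+1)$-, and $(j,i)$/$(i,j)$-crossings with $j-i>1$, the image foams under $\Phi_3^B$ are of ``product type'' along the $3$-labeled sheets: each $3$-labeled facet is a rectangle joining a single $3$-labeled edge of the codomain to a single $3$-labeled edge of the domain. Hence for such a foam $F$ one has $\chi_3(F) = n_u(F)$, so $\Psi$ contributes the sign $(-1)^{\chi_3(F)-n_u(F)} = +1$, and the underlying foam $\overline{F}$ is literally the foam assigned by $\Phi_3$. I would verify this facet-by-facet (accounting for the degenerations when some $a_i \in \{0,3\}$), which confirms agreement on the dots and on all crossings except the $(i+1,i)$-crossing.

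\textbf{The $(i+1,i)$-crossing and the caps and cups.} This is where the content lies. For the $(i+1,i)$-crossing, $\Phi_3$ carries the factor $(-1)^{a_{i+1}+1}$ whereas $\Phi_3^B$ (following the formulas of Proposition \ref{prop_sl2}) carries none; I would compute the topology of the $3$-labeled facets of the crossing foam and check that $(-1)^{\chi_3(F)-n_u(F)} = (-1)^{a_{i+1}+1}$, so that $\Psi$ supplies exactly the missing sign. For the four cup and cap generators, $\Phi_3^B$ uses the uniform scalars $+1$, $(-1)^{a_i}$, $(-1)^{a_i+1}$, $+1$, and I would run through the finitely many cases indexed by $N_i = a_i + a_{i+1}$, by $\lambda_i = a_{i+1}-a_i$, and by the two boundary orientations, determining in each case whether the relevant yellow facet degenerates to a disk (a capped-off $3$-edge) or remains a half-disk/rectangle, and whether a $3$-labeled edge survives in the codomain. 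These data give $\chi_3(F)$ and $n_u(F)$ modulo $2$, and the plan is to confirm that the product of the uniform Proposition \ref{prop_sl2} scalar with $(-1)^{\chi_3(F)-n_u(F)}$ reproduces each entry of the table \eqref{sl3signs}.

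\textbf{Main obstacle.} The essential difficulty is purely this last sign bookkeeping: correctly reading the topology of the $3$-labeled facets in each degenerate image foam so as to evaluate $\chi_3$ and $n_u$, and then matching all the entries of \eqref{sl3signs}. I emphasize that one cannot shortcut this by merely noting that both maps are valid $2$-functors, since the cup/cap scalings a priori admit a gauge ambiguity (rescaling a unit by $c$ and the matching counit by $c^{-1}$) compatible with the functor axioms; equality of the two functors is a statement about the actual scalars, not just about well-definedness. Once the facet topology is pinned down, however, the verification is finite and mechanical, and it simultaneously furnishes the promised topological explanation of the signs appearing in \eqref{sl3signs}.
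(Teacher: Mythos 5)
Your proposal is correct and matches the paper's proof, which likewise reduces the statement to a case-by-case check on the generating $2$-morphisms (declared "routine, yet tedious") and verifies a few representative instances — the $(i,i+1)$ and $(i+1,i)$ crossings with $a_{i+1}=2$ and the cap/cup with $a_i=0$, $a_{i+1}=3$ — by computing the sign $(-1)^{\chi_3(F)-n_u(F)}$ contributed by the forgetful functor. Your additional observations that the dots and the remaining crossings are of product type (so contribute trivial sign) and that one cannot shortcut via abstract $2$-functoriality because of the cup/cap gauge ambiguity are both sound and consistent with how the paper organizes the argument.
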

\begin{proof}
The proof follows from a routine, yet tedious, calculation. We'll exhibit a few of the more interesting cases:
\begin{itemize}
\item Let $\lambda$ map to a sequence where $a_{i+1}=2$, then
\[
\xy 0;/r.20pc/:
	(0,0)*{\xybox{
    	(-4,-4)*{};(4,4)*{} **\crv{(-4,-1) & (4,1)}?(1)*\dir{>} ;
    	(4,-4)*{};(-4,4)*{} **\crv{(4,-1) & (-4,1)}?(1)*\dir{>};
    	(-5.5,-3)*{\scs i};
     	(5.5,-3)*{\scs \ \ \ i+1};
     	(9,1)*{\scs  \lambda};
     	(-10,0)*{};(10,0)*{};}};
\endxy \quad \mapsto \quad
\xy
(0,0)*{
};
\endxy \quad ;
\]
note that this guarantees that the relevant degree zero bubble
$
\xy 0;/r.18pc/:
 (0,0)*{\icbub{2}{i}};
  (4,8)*{\lambda};
 \endxy
$
is sent to the identity $2$-morphism in $\foam{3}{m}$.
\end{itemize}
\end{proof}

\subsubsection{Clark-Morrison-Walker $\mathfrak{sl}_3$ foams}
%

One may notice that the topology of a $3$-labeled facet is relatively unimportant; the signs obtained by
removing any $3$-labeled facet (not incident upon the boundary webs) depend only on the facet's boundary
seams. One may then ask why one needs these facets at all: couldn't we instead introduce a
Clark-Morrison-Walker (CMW) version of $\mathfrak{sl}_3$ foams?


Indeed, we can define such a theory by removing all $3$-labeled facets and edges of webs
from the definitions in the previous subsection, keeping only the new ``CMW seams''
and imposing the relation that one may remove a closed seam at the cost of multiplying by $-1$.
It is easy to see that we obtain a family of $2$-functors from $\Ucat_Q(\mathfrak{sl}_m)$ to the
$2$-category of CMW $\mathfrak{sl}_3$ foams. Note that the CMW seams in such a theory do not need
fringes, unlike the $\mathfrak{sl}_2$ case.

However, when one tries to define a forgetful $2$-functor to the category of (traditional) $\mathfrak{sl}_3$ foams
as before, it surprisingly appears that the rigidity
obtained from the interaction of the $3$-labeled facets with the $3$-labeled edges plays a non-trivial role.
Indeed, there is no hope to define such a $2$-functor as we now demonstrate.

Assume a forgetful $2$-functor exists. We need maps
\[
 \xy
(0,0)*{
};
\endxy
\]
we must have $\delta = \alpha$. This then gives
\[
1=\alpha \gamma = \delta \gamma = -1,
\]
a contradiction.

Note that this argument remains valid even if we enhance the CMW seams with fringes (repeat the above
argument with all seam directed into the page). The above argument no longer remains valid if the seams
are given an orientation; however, similar arguments prevent a definition of CMW $\mathfrak{sl}_3$ foams
possessing a forgetful functor to $\mathfrak{sl}_3$ foams.

%
\subsection{Extended foamation $2$-functors}
%


In order to construct categorified link invariants, we will need to consider the images of the Rickard complexes
under the foamation $2$-functors and hence the images of divided powers. Recall that the later are
$1$-morphisms which lie in $\UcatD_Q(\mf{sl}_m)$ but not in $\Ucat_Q(\mf{sl}_m)$.

The foamation $2$-functors $\cal{U}_Q(\mf{sl}_m) \to n\cat{(B)Foam}_m(N)$ extend to $2$-functors
$$\UcatD_Q(\mf{sl}_m) \to Kar(n\cat{(B)Foam}_m(N)).$$  We would like to consider these extended
$2$-functors; however, the Karoubi envelope of the foam $2$-categories is not easy to work with. Indeed,
the indecomposable $2$-morphisms in these $2$-categories are closely related to (dual) canonical bases.

It turns out, however, that the foam $2$-categories are ``closer'' to their Karoubi envelopes than the $2$-categories
$\Ucat_Q(\mf{sl}_m)$ are to $\UcatD_Q(\mf{sl}_m)$. Indeed we shall see that the images of the divided powers
already exist in the foam $2$-categories.

\begin{prop}
The $2$-functors defined in Propositions \ref{prop_sl2}, \ref{prop_sl3}, and \ref{prop_sl3_enhanced} extend
to $2$-functors from $\Ucatc_Q(\mf{sl}_m)$ to the relevant foam $2$-categories.
\end{prop}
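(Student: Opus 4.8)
The plan is to invoke the remark following the definition of $2$-representations, which reduces the problem to a purely $\mf{sl}_2$ statement: for each $i \in I$ it suffices to show that the images under $\Phi_n$ of the divided powers $\cal{E}_i^{(a)}\onel$ and $\cal{F}_i^{(b)}\onel$ already exist as genuine $1$-morphisms in the foam $2$-category, not merely in its Karoubi envelope. Granting this, the composite $\Ucat_Q(\mf{sl}_2)_i \hookrightarrow \Ucat_Q(\mf{sl}_m) \xrightarrow{\Phi_n} n\cat{(B)Foam}_m(N)$ extends over the Karoubi envelope $\UcatD_Q(\mf{sl}_2)_i$ for every $i$, and since the $1$-morphisms of $\Ucatc_Q(\mf{sl}_m)$ are generated by the $\cal{E}_i\onel$, $\cal{E}_i^{(a)}\onel$ and their adjoints, this is precisely the data needed to extend $\Phi_n$ to $\Ucatc_Q(\mf{sl}_m)$. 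Since $\Phi_n$ already extends to a $2$-functor $\UcatD_Q(\mf{sl}_m) \to Kar(n\cat{(B)Foam}_m(N))$, the content is to show that the objects $\cal{E}_i^{(a)}\onel = \im e_a$ are carried to objects isomorphic, in the Karoubi envelope, to honest webs.

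First I would produce the candidate representing webs. For an admissible weight $\lambda \mapsto (a_1,\ldots,a_m)$ I let $W_{i,a}\onel$ denote the ladder web obtained from the image of $\cal{E}_i^a\onel$ by replacing its $a$ stacked $1$-labeled rungs between the $i$-th and $(i{+}1)$-st strands by a single rung of thickness $a$; here a rung of thickness $2$ or $3$ is recorded by a $2$- or $3$-labeled edge, which lies in the relevant web category (for $\mf{sl}_2$ Blanchet foams the $2$-labeled edges of $2\cat{BWeb}_m(N)$, and for the $\mf{sl}_3$ categories the $2$- and $3$-labeled edges). Because $E_i$ sends $(a_i,a_{i+1})$ to $(a_i-1,a_{i+1}+1)$, the object $\cal{E}_i^{(a)}\onel$ is nonzero only when the thickness $a$ stays within the available labels, so $W_{i,a}\onel$ is always a legitimate web, and both objects vanish otherwise.

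The key step is to realize the decomposition $\cal{E}_i^a\onel\{-\tfrac{a(a-1)}{2}\} \cong \bigoplus_{[a]!} \cal{E}_i^{(a)}\onel$ inside the foam $2$-category itself. I would construct explicit ``zip'' and ``unzip'' foams merging the $a$ parallel thin facets into the single thick facet of $W_{i,a}\onel$ and splitting them back, and then check that they assemble into an isomorphism $\Phi_n(\cal{E}_i^a\onel) \cong \bigoplus_{[a]!} W_{i,a}\onel$ in $n\cat{(B)Foam}_m(N)$. This is the foam-level categorification of the web digon (``bamboo'') relation; it is proved by iterating the two-rung case, which is handled by relations such as \eqref{sl2bubble12}, \eqref{sl2bubble11nodot}, \eqref{sl2bubble11dot} in the $\mf{sl}_2$ setting and \eqref{sl3bamboo} together with \eqref{sl3neckcutting} and \eqref{sl3thetafoam} in the $\mf{sl}_3$ setting, using associativity of the merge foams to combine rungs two at a time. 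Comparing this internal decomposition with the Karoubi decomposition $\hat\Phi_n(\cal{E}_i^a\onel) \cong \bigoplus_{[a]!}\hat\Phi_n(\cal{E}_i^{(a)}\onel)$, and invoking the Krull--Schmidt property of $n\cat{(B)Foam}_m(N)$ — which follows from Hom-finiteness together with the graded-local structure of endomorphism rings (in particular $W_{i,a}\onel$ is indecomposable with one-dimensional degree-zero endomorphisms) established in the proofs of Propositions \ref{prop_sl2}, \ref{prop_sl3}, and \ref{prop_sl3_enhanced} — one concludes $\hat\Phi_n(\cal{E}_i^{(a)}\onel) \cong W_{i,a}\onel$ in $Kar(n\cat{(B)Foam}_m(N))$. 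The corresponding statement for the adjoints $\cal{F}_i^{(b)}\onel$ follows by turning all foams upside down.

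The main obstacle is exactly this internal decomposition: exhibiting the merge/split foams and verifying that the resulting $[a]! \times [a]!$ matrix of composites is the identity, uniformly across the many weight-dependent cases and with the correct signs in the enhanced and Blanchet categories, where $2$- and $3$-labeled facets contribute additional signs. Equivalently, one may phrase the verification as splitting the idempotent $\Phi_n(e_a)$ directly, producing $g,h$ with $gh = \id_{W_{i,a}\onel}$ and $hg = \Phi_n(e_a)$; in that formulation the difficulty is matching the unzip-then-zip composite with the explicit staircase-of-dots form of $e_a$ from \cite{KLMS}. I expect the cleanest route is the decomposition argument above, which sidesteps computing $e_a$ on the nose and instead reduces everything to the already-verified digon relations.
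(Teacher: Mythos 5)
Your overall reduction is the same as the paper's: both arguments start from the extension $\UcatD_Q(\mf{sl}_m)\to Kar(n\cat{(B)Foam}_m(N))$ and reduce to showing that the images of the divided powers $\cal{E}_i^{(a)}\onel$ and $\cal{F}_i^{(b)}\onel$ are isomorphic, in the Karoubi envelope, to honest webs. Where you diverge is in how that is verified. The paper takes precisely the route you set aside at the end: since $\cal{E}_i^k\onel\mapsto 0$ for $k\ge 3$ (resp.\ $k\ge 4$ in the $\mf{sl}_3$ cases), only $a=2$ (resp.\ $a=2,3$) actually occur, so $\Phi_n(e_a)$ is a single explicit foam; the paper writes down the pair $\bigl(\Phi_n(\cal{E}_i^a\onel\{\tfrac{a(a-1)}{2}\}),\Phi_n(e_a)\bigr)$ for each admissible weight and shows it is isomorphic to $(W,\id_W)$ for the expected thick-rung web $W$ using the local relations (\eqref{sl2NH_enh} in the Blanchet $\mf{sl}_2$ case, \eqref{sl3airlock} and \eqref{3Btube} in the $\mf{sl}_3$ cases). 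This is a short case-by-case check that needs no structure theory of the foam categories.

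Your alternative --- decompose $\Phi_n(\cal{E}_i^a\onel)\cong\bigoplus_{[a]!}W_{i,a}\onel$ internally via zip/unzip foams and then match summands against $\bigoplus_{[a]!}\hat{\Phi}_n(\cal{E}_i^{(a)}\onel)$ --- is viable in principle, and the internal decomposition is of the same flavour as Lemma~\ref{lem_abstract2}. But the Krull--Schmidt step is not free as written: you need $W_{i,a}\onel$ to be indecomposable with graded-local endomorphism ring, and the cited proofs only establish the relevant positivity and one-dimensionality for $\END(\1_{\l})$, not for arbitrary webs (a $1$-labelled circle, for instance, has endomorphisms in negative degree, which is exactly why it decomposes). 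So that claim requires its own computation of $\END(W_{i,a}\onel)$, at which point you have done at least as much work as the direct idempotent splitting; the direct route also pins down the grading shift on the nose, which in the Krull--Schmidt argument must be extracted separately by comparing lowest degrees.
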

\begin{proof}
%
Recall that any category $\cal{C}$ embeds fully faithfully into $Kar(\cal{C})$
by sending an object $c$ to the pair $(c,\id_c)$ and a morphism $f:c\to d$ to the triple $(\id_c,f,\id_d)$, see ~\cite[Chapter 6.5]{Bor,Bor}. It thus suffices to show that
when we restrict the $2$-functor $\UcatD_Q(\mf{sl}_m) \to Kar(n\cat{(B)Foam}_m(N))$ to $\Ucatc_Q(\mf{sl}_m)$, the
images of all $1$-morphisms lie
in the subcategory $n\cat{(B)Foam}_m(N) \subset Kar(n\cat{(B)Foam}_m(N))$ (up to isomorphism).

We begin with the $\mf{sl}_2$ case. Since $\cal{E}_i^k \onel \mapsto 0$ and
$\cal{F}_i^k \onel \mapsto 0$ for all $k\geq 3$, we need only to consider
the $1$-morphisms $\cal{E}_i^{(2)} \onel$ and $\cal{F}_i^{(2)} \onel$. Note that the $1$-morphism
\[
\cal{E}_i^{(2)}\onel = (\cal{E}_i^2 \onel \{1\},
\xy 0;/r.18pc/:
  (0,0)*{\xybox{
    (-4,-4)*{};(4,6)*{} **\crv{(-4,-1) & (4,1)}?(1)*\dir{>};
    (4,-4)*{};(-4,6)*{} **\crv{(4,-1) & (-4,1)}?(1)*\dir{>}?(.75)*{\bullet};
     (8,1)*{ \lambda};
     (-10,0)*{};(10,0)*{};
     }};
  \endxy
)
\]
is mapped to zero unless $a_i=2$ and $a_{i+1}=0$. In this case, the above is mapped to
\[
\left( q \; \;
\xy
(0,0)*{
};
\endxy \quad
\right)
\]
when $a_i = 0$ and $a_{i+1}=2$ (the only case when $\cal{F}_i^2 \onel$ is not mapped to zero).

In the $\mathfrak{sl}_3$ case, it suffices to consider the $2$-functor $\Ucat_Q(\mf{sl}_m) \to \Bfoam{3}{m}$, since
the $2$-functor $\Ucat_Q(\mf{sl}_m) \to \foam{3}{m}$ is obtained via composition with the forgetful functor.
We find that $\cal{E}_i^{k} \onel$ and $\cal{F}_i^k \onel$ are both sent to zero for
$k\geq4$, so it suffices to consider
the $1$-morphisms $\cal{E}_i^{(2)} \onel$, $\cal{E}_i^{(3)} \onel$, $\cal{F}_i^{(2)} \onel$, and
$\cal{F}_i^{(3)} \onel$.

We find that $\cal{E}_i^{(2)} \onel$ is (again) mapped to the $1$-morphism
\[
\left(
q \left(
\xy
(0,0)*{
};
\endxy
\]
where $2$-labeled edges in $\foam{3}{m}$ and $\Bfoam{3}{m}$ should be viewed as
$1$-labeled edges oriented in the
opposite direction and $3$-labeled edges should be deleted in $\foam{3}{m}$ and un-oriented in
$\Bfoam{3}{m}$. For example, the image of $\cal{E}^{(3)} \mathbf{1}_{-3}$ in
$\foam{3}{2}$ is the ``empty web'' between the sequences $(3,0)$ and $(0,3)$.

%
\section{Applications}
%


Many known constructions in link homology follow from the $2$-functors defined in the previous section.
Indeed, we will re-construct Khovanov homology, $\mf{sl}_3$ link homology, and
categorified highest weight projectors in these theories using the categorified quantum Weyl group action.
The skew Howe perspective also provides a framework for showing that Cautis and Kamnitzer's algebro-geometric
formulation of $\mf{sl}_3$ link homology is isomorphic to Khovanov's $\mf{sl}_3$ link homology
(the latter is  known to be the same as $\mf{sl}_3$ Khovanov-Rozanksy homology and the category $\cal{O}$
$\mf{sl}_3$ link invariant).
We also explain how foam relations follow as consequences of the relations in the categorified
quantum group.

%
\subsection{Link homology via skew Howe duality} \label{SectionKnotHom}
%
In this section, we show that all of the ingredients needed to define $\mf{sl}_2$ and $\mf{sl}_3$ link
homology theories can be recovered from the foamation functors. We also show how the invariant of
any link can be given as the image of a complex in $\Ucatc_Q(\mf{sl}_m)$. This suggests that the graphical
calculus in the categorified quantum group can be used to explore properties of categorified link
invariants.


%
\subsubsection{Categorified braidings}
%

In \cite[Theorem 4.3]{CKL}, Cautis-Kamnitzer-Licata show that the action of the quantum Weyl group elements
$T_i 1_\l$ on the skew Howe representation $\Alt{q}{N}(\C^n_q \otimes \C^m_q)$ gives the
braiding on the category of finite-dimensional $\U(\mf{sl}_n)$ representations, up to a factor of $\pm q^r$.
In the language of webs, this says that the value of a crossing is given by the image of the corresponding
quantum Weyl group element.

The same holds true at the categorified level (after extending the foamation $2$-functors to
$2$-categories of complexes),
i.e. the complexes assigned to crossings in $\mf{sl}_2$ and
$\mf{sl}_3$ link homology can be recovered, up to shifts in quantum and homological degree, as the
images of the Rickard complexes. Recall these are given by
\[
\cal{T}_i \onel =
\xymatrix{ \cal{E}_i^{(-\l_i)} \onel \ar[r]^-{d_1} & \cal{E}_i^{(-\l_i+1)} \cal{F}_i \onel \{1\} \ar[r]^-{d_2} & \cdots
\ar[r]^-{d_s} & \cal{E}_i^{(-\l_i+s)} \cal{F}_i^{(s)} \onel \{s\} \ar[r]^-{d_{s+1}} &\cdots}
\]
when $\l_i \leq 0$ and
\[
\cal{T}_i \onel =
\xymatrix{ \cal{F}_i^{(\l_i)} \onel \ar[r]^-{d_1} & \cal{F}_i^{(\l_i+1)} \cal{E}_i \onel \{1\} \ar[r]^-{d_2} & \cdots
\ar[r]^-{d_s} & \cal{F}_i^{(\l_i+s)} \cal{E}_i^{(s)} \onel \{s\} \ar[r]^-{d_{s+1}} &\cdots}
\]
when $\l_i \geq 0$, where in the above formulae the leftmost term is in homological degree zero. The above
complexes are isomorphic when $\l_i = 0$.

The complexes $\cal{T}_i \onel$ are invertible, up to homotopy, with inverses given by
\[
\onel \cal{T}_i^{-1} =
\xymatrix{ \cdots \ar[r]^-{d_{s+1}^*} & \onel \cal{E}_i^{(s)} \cal{F}_i^{(-\l_i+s)} \{-s\} \ar[r]^-{d_s^*}
& \cdots \ar[r]^-{d_2^*}
& \onel \cal{E}_i \cal{F}_i^{(-\l_i+1)} \{-1\} \ar[r]^-{d_1^*} & \onel \cal{F}_i^{(-\l_i)}}
\]
when $\l_i \leq 0$ and
\[
\onel \cal{T}_i^{-1} =
\xymatrix{ \cdots \ar[r]^-{d_{s+1}^*} & \onel \cal{F}_i^{(s)} \cal{E}_i^{(\l_i+s)} \{-s\} \ar[r]^-{d_s^*}
& \cdots \ar[r]^-{d_2^*}
& \onel \cal{F}_i \cal{E}_i^{(\l_i+1)} \{-1\} \ar[r]^-{d_1^*} & \onel \cal{E}_i^{(\l_i)}}
\]
when $\l_i \geq 0$, where in these formulae the rightmost term is in homological degree zero.
Note that these complexes are obtained by taking the adjoints of the above (in the category of complexes).

We begin with the $\mf{sl}_2$ case.
When $\lambda$ maps to a sequence with $a_i=1=a_{i+1}$,
\begin{equation}\label{sl2crossing+11}
\Phi_2(\cal{T}_i \onel) = \left(
\xymatrix{
\xy
(0,0)*{
};
\endxy \quad  \notag
\end{align}
where the dotted strands are meant to indicate a ``$0$-labeled'' edge, i.e. an edge that is not actually present.
The braiding on two such $0$-labeled edges is simply the empty web mapping between the appropriate labels.

In the $\mf{sl}_3$ case, we give the formulae for the braidings in $\Bfoam{3}{m}$, since those in
$\foam{3}{m}$ can be recovered from these via the forgetful $2$-functor. We'll first compute the braidings for
the traditional $\mf{sl}_3$ edges. The $(1,1)$ crossings are again given as
\begin{equation}\label{sl3crossing+11}
\xy
(0,0)*{
};
\endxy \quad  \notag
\end{align}} where again the dotted strands are meant to indicate ``$0$-labeled''  (non-)edges.
The braiding on two such edges is the empty web.

Let $n=2,3$; it will be useful to note that any object $\mathbf{a} = (a_1,\cdots,a_m)$ in
$n\cat{(B)Foam}_m(N)$ can be identified with a canonical object which corresponds to the same
$\mf{sl}_n$ representation as $\mathbf{a}$
(up to isomorphism).
Given an object $\mathbf{a}$ in $n\cat{(B)Foam}_m(N)$, denote by $\bar{\mathbf{a}}$ the associated
``reduced sequence'' defined to be the same sequence as $\mathbf{a}$ with all values
$0$ and $n$ deleted. For example, if $\mathbf{a}=(1,3,0,2,0,1)$ and $n=3$, then $\bar{\mathbf{a}}=(1,2,1)$.

\begin{defn}
Given an object $\mathbf{a}$ of $n\cat{(B)Foam}_m(N)$, the associated \emph{canonical sequence} is
the unique object $\mathbf{a}'$ in $n\cat{(B)Foam}_m(N)$
such that $\bar{\mathbf{a}'}=\bar{\mathbf{a}}$ and
\[\mathbf{a}'=(0,\ldots,0,a'_k,a'_{k+1},\ldots,a'_{k+r},n,\ldots,n)\] with $0<a'_{k+s}<n$ for $0\leq s\leq r$.
\end{defn}

The trivial braidings \eqref{sl2crossings} and \eqref{sl3crossings} can be used to give an
equivalence
between an object $\mathbf{a}$ in $n\cat{(B)Foam}_m(N)$ and its canonical sequence $\mathbf{a}'$
(this is the analog of \cite[Corollaries 7.3 and 7.8]{Cautis} in the web and foam setting).
Let the web
$\xymatrix{\mathbf{a} \ar[r]^{\beta_{\mathbf{a}}} & \mathbf{a}'}$ be given by the (composition of)
braidings involving $0$- and $n$-labeled edges and let the web
$\xymatrix{\mathbf{a}' \ar[r]^{\beta_{\mathbf{a}}^{-1}} & \mathbf{a}}$ be given
using the inverses of the above braidings.
Since the images of the Rickard complexes braid in any (integrable) $2$-representation \cite{CK},
the above maps are uniquely defined up to coherent isomorphism. Fix once and for all choices of
$\beta_{\mathbf{a}}$ and $\beta_{\mathbf{a}}^{-1}$ for each object $\mathbf{a}$ in each of the foam
$2$-categories.



%
\subsubsection{The $\mf{sl}_2$ tangle invariant}
%

The webs that appear in the image of  the foamation $2$-functors are all in ladder form; hence,
we require a method for assigning a complex of ladders in the foam $2$-categories to each tangle.
A process which transforms any web to a ladder is detailed in \cite{CKM}; however, an adaptation of a
construction from \cite{Cautis} is more useful for our purposes.

Let $\tau$ be an oriented $(r,t)$-tangle diagram, i.e. a tangle diagram with $r$ endpoints on the right and
$t$ endpoints on the left, which we assume to be in Morse position with respect to the horizontal axis.
We now describe a method for
assigning to this diagram a complex $\llbracket \tau \rrbracket_2$
in $\Bfoam[r+2l]{2}{r+2l}$, for $l$ sufficiently large.

We assign to each basic tangle a complex of $1$-morphisms mapping between canonical sequences; the
complex assigned to a tangle will then be the horizontal composition (in the $2$-category of complexes) of
the basic complexes.

A tangle involving no crossings, cups, or caps is mapped to the identity web of the sequence
$(0,\dots,0,1,\ldots,1,2,\ldots,2)$ where the number of $1$'s is equal to the number of strands in the tangle.
For example, we have
\[
\xy
(0,0)*{
};
\endxy
\]
where the latter is understood to represent the complex assigned to a left-oriented crossing horizontally
composed in the category of complexes with the indicated webs (and the necessary braiding maps
$\beta^{-1}$ and $\beta$ so that the webs in the complex map between canonical sequences). Formulae
for the other crossings can be obtained similarly.

\begin{prop} The complex $\llbracket \tau \rrbracket_2$ assigned to a tangle diagram $\tau$,
viewed in the homotopy category of complexes of $\Bfoam{2}{m}$, gives an invariant of framed tangles.
\end{prop}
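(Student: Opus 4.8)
The plan is to show that $\llbracket \tau \rrbracket_2$ is unchanged, up to homotopy equivalence in the homotopy category of $\Bfoam{2}{m}$, under the local moves that generate framed isotopy of tangle diagrams in Morse position: the oriented Reidemeister moves RII and RIII, the framed first move RI (a curl equals the identity strand up to an overall quantum and homological shift), the zig-zag/Morse identities relating cups and caps, and the planar-isotopy moves sliding crossings past cups, caps, and one another. By the Reidemeister-type theorem for framed tangles these suffice. Since $\llbracket \tau \rrbracket_2$ is built by horizontal and vertical composition (in the $2$-category of complexes) of the images $\Phi_2(\cal{T}_i\onel)$, $\Phi_2(\onel\cal{T}_i^{-1})$, $\Phi_2(\cal{E}_i\onel)$, $\Phi_2(\cal{F}_i\onel)$ precomposed with the fixed coherence webs $\beta_{\mathbf{a}}^{\pm 1}$, each move is local and reduces to a corresponding identity of complexes. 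The mechanism throughout is that each such identity already holds, up to homotopy, in the homotopy category of $\Ucatc_Q(\mf{sl}_m)$ and is transported to $\Bfoam{2}{m}$ by the additive $2$-functor $\Phi_2$ extended to complexes, which sends homotopy equivalences to homotopy equivalences.

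The move RII is immediate: as recorded above, $\cal{T}_i\onel$ and $\onel\cal{T}_i^{-1}$ are mutually inverse up to homotopy in the homotopy category of $\Ucatc_Q(\mf{sl}_m)$, so applying $\Phi_2$ yields a homotopy equivalence $\Phi_2(\cal{T}_i\onel)\,\Phi_2(\onel\cal{T}_i^{-1}) \simeq \id$, which is precisely the RII cancellation of a positive and negative left-oriented crossing pair. For RIII I would invoke the result of Cautis and Kamnitzer \cite[Section~6]{CK} that in any integrable $2$-representation the complexes $\cal{T}_i\onel$ satisfy the braid relations up to homotopy; as $\Bfoam{2}{m}$ is such a $2$-representation via $\Phi_2$, the relation $\cal{T}_i\cal{T}_{i+1}\cal{T}_i\onel \simeq \cal{T}_{i+1}\cal{T}_i\cal{T}_{i+1}\onel$ transports, and under the skew Howe dictionary this is exactly the ladder form of the oriented RIII move on three adjacent strands. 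All remaining orientations of RII and RIII then follow from these together with the Morse moves, since every ``corner'' crossing was defined from the left-oriented one by composing with cups and caps.

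The zig-zag (snake) identities follow from the biadjunction of $\cal{E}_i\onel$ and $\cal{F}_i\onel$ in $\Ucatc_Q(\mf{sl}_m)$, relations \eqref{eq_biadjoint1}--\eqref{eq_biadjoint2}, whose images are the cup and cap $2$-morphisms; here I must use that the coherence webs $\beta_{\mathbf{a}}^{\pm 1}$ are uniquely defined up to coherent isomorphism (a consequence of the braiding of the Rickard complexes), so that the inserted $\beta$'s and their inverses cancel and do not obstruct the snake relations. The planar-isotopy moves sliding a crossing past a cup or cap reduce to naturality of the categorified braiding, i.e.\ to the fact that the image Rickard complexes commute with the units and counits of the biadjunction up to homotopy, which again holds in the homotopy category of $\Ucatc_Q(\mf{sl}_m)$ and transports under $\Phi_2$. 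For the framed RI move I would compute the image of a curl directly: it is the composite of a crossing complex with a cup and a cap, which simplifies by RII-type cancellation to the identity web on the strand, shifted by the quantum and homological degrees recording the framing change; this yields framed, rather than ordinary, RI.

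The main obstacle is not any single move but the bookkeeping of the coherence webs $\beta_{\mathbf{a}}$ and the passage between an arbitrary object $\mathbf{a}$ and its canonical sequence: one must verify that the local identities, which hold cleanly in the homotopy category of $\Ucatc_Q(\mf{sl}_m)$ \emph{before} the $\beta$'s are inserted, persist after these webs are threaded through each cup, cap, and crossing, and that the fixed global choices do not affect the homotopy class of $\llbracket \tau \rrbracket_2$. The coherence of the $\beta$'s is exactly what makes this tractable, but arranging each generating move so that the inserted $\beta$'s cancel, together with carrying out the explicit framed RI computation, is where the genuine work lies.
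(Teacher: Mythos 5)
Your proposal is correct and follows essentially the same route the paper indicates: the paper's proof reduces to checking the tangle Reidemeister moves and explicitly points to the fact (via \cite[Proposition 7.9]{Cautis} and the Cautis--Kamnitzer braid relations) that most of the required homotopy equivalences already hold in $Kom(\Ucatc_Q(\mf{sl}_m))$ and are transported by $\Phi_2$, which is exactly the mechanism you spell out. Your additional attention to the coherence webs $\beta_{\mathbf{a}}^{\pm 1}$ and the explicit framed RI computation fills in details the paper leaves as "standard," but does not constitute a different approach.
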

\begin{proof}
It suffices to check the tangle Reidemeister moves (see \cite{Kassel} or \cite{CK01}); this is a standard
computation following the argument detailed in \cite{BN2} adapted to the Blanchet foam setting.
Alternatively, one can simplify the computation using the proof of \cite[Proposition 7.9]{Cautis},
where it is shown that (most of) the desired relations hold already in the categorified quantum group.
\end{proof}

One can check that (locally)
\[
\left \llbracket \quad
\xy
(0,0)*{\begin{tikzpicture} [decoration={markings, mark=at position 1 with {\arrow{>}}; },scale=.3]
\draw[very thick] (0,-2) -- (0,0) -- (1,1) to[out=45,in=90] (2,.5) to[out=270,in=315] (1,0) -- (.7,.3);
\draw[very thick, postaction={decorate}] (.3,.7) -- (0,1) -- (0,3);
\end{tikzpicture}};
\endxy \quad
\right \rrbracket_2 \simeq
q^{-1}
\left \llbracket \quad
\xy
(0,0)*{\begin{tikzpicture} [decoration={markings, mark=at position 1 with {\arrow{>}}; },scale=.3]
\draw[very thick, postaction={decorate}] (0,-2) -- (0,3);
\end{tikzpicture}};
\endxy \quad
\right \rrbracket_2
\quad \text{and} \quad
\left \llbracket \quad
\xy
(0,0)*{\begin{tikzpicture} [decoration={markings, mark=at position 1 with {\arrow{>}}; },scale=.3]
\draw[very thick, postaction={decorate}] (.7,.7) -- (1,1) to[out=45,in=90] (2,.5) to[out=270,in=315]
	(1,0) --  (0,1) -- (0,3);
\draw[very thick] (0,-2) -- (0,0) -- (.3,.3);
\end{tikzpicture}};
\endxy \quad
\right \rrbracket_2 \simeq
q
\left \llbracket \quad
\xy
(0,0)*{\begin{tikzpicture} [decoration={markings, mark=at position 1 with {\arrow{>}}; },scale=.3]
\draw[very thick, postaction={decorate}] (0,-2) -- (0,3);
\end{tikzpicture}};
\endxy \quad
\right \rrbracket_2
\]
so renormalizing the invariant using the writhe $w(\tau)$ of the tangle:
\[
\llbracket \tau \rrbracket_2^{r} := q^{w(\tau)} \llbracket \tau \rrbracket_2
\]
gives an invariant independent of framing.

Given a link $L$, the invariant $\llbracket L \rrbracket_2^r$ is a complex of webs mapping between the
sequence $(\underline{0},\underline{2}) := (0,\ldots,0,2,\ldots,2)$ and itself. Applying the functor
\[
\HOM(\id_{(\underline{0},\underline{2})},-) := \bigoplus_{t\in \Z} \Hom(q^{-t}\id_{(\underline{0},\underline{2})},-)
\]
to this complex (where $\id_{(\underline{0},\underline{2})}$ is the identity web)
and setting the parameter $\dottedsphere[.5]{3}=0$ gives a complex of finite-dimensional
graded vector spaces, which we denote $Kh_2(L)$. As the notation indicates, we have the following result.

\begin{prop}
The (co)homology of the complex $Kh_2(L)$ is the Khovanov homology of the link $L$.
\end{prop}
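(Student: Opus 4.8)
The plan is to show that the construction $Kh_2(L)$ defined via the foamation functor $\Phi_2$ and the Rickard complexes agrees with the usual Bar-Natan/Khovanov construction. The key observation is that the entire apparatus---crossings assigned via $\Phi_2(\cal{T}_i\onel)$ and $\Phi_2(\onel\cal{T}_i^{-1})$, cups, caps, and the identity webs---lands in the $2$-category $\Bfoam{2}{m}$, which is (by the discussion preceding Proposition~\ref{prop_sl2}) an enhanced version of Bar-Natan's cobordism $2$-category with local relations equivalent to Blanchet's. Since Blanchet's construction is already known to recover Khovanov homology over $\Z$, the essential task is to verify that the complex $\llbracket L \rrbracket_2$ produced here coincides, term-by-term and differential-by-differential, with the complex Blanchet (equivalently Bar-Natan) assigns to the link diagram, after setting the $3$-dotted-sphere parameter $\dottedsphere[.5]{3}=0$ and applying $\HOM(\id_{(\underline{0},\underline{2})},-)$.

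First I would check the local building blocks. Equation~\eqref{sl2crossing+11} exhibits $\Phi_2(\cal{T}_i\onel)$ for a $(1,1)$ crossing as a two-term complex whose terms are the identity web and the ``ladder'' web with a $2$-labeled rung, connected by a saddle-type foam; this is precisely the Blanchet-foam analog of the Bar-Natan cube-of-resolutions formula for a positive crossing (as noted explicitly in the text, citing \cite{Kh1,BN2}), and \eqref{sl2crossing-11} gives the negative crossing. I would then confirm that the two resolutions appearing---the ``$0$-smoothing'' and ``$1$-smoothing'' in Khovanov's language---correspond under $\Phi_2$ to the identity web and the $2$-rung web respectively, and that the differential is the expected saddle cobordism. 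The cup and cap assignments must be matched with the unit and counit maps in the Bar-Natan TQFT; here the Frobenius algebra structure on the $1$-labeled facets (governed by \eqref{sl2closedfoam} and \eqref{sl2neckcutting}) reduces, after deleting $2$-labeled data, to the standard $A = \Bbbk[X]/(X^2)$ with the usual (co)multiplication once $\dottedsphere[.5]{3}=0$.

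Next I would handle the passage from the ladder/skew-Howe presentation to the topological tangle diagram. The complex $\llbracket\tau\rrbracket_2$ is built by horizontally composing basic complexes in $\Bfoam{2}{m}$, with canonical-sequence normalization and the braiding webs $\beta_{\mathbf a},\beta_{\mathbf a}^{-1}$ inserted. I would argue that applying $\HOM(\id_{(\underline0,\underline2)},-)$ collapses this ladder data: the $2$-labeled facets and the $\beta$-webs contribute only signs and grading shifts (by the enhanced-foam relations and the neck-cutting relation \eqref{sl2neckcutting_enh_2lab}), leaving exactly the Bar-Natan complex of the underlying planar tangle. The degree formula \eqref{sl2foamdeg} ensures the quantum gradings match, and the writhe renormalization $\llbracket\tau\rrbracket_2^r$ aligns the homological/quantum shifts with Khovanov's conventions. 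Because the foamation functor is a genuine $2$-functor (Proposition~\ref{prop_sl2}) and the Rickard complexes satisfy the braid relations up to homotopy \cite{CK}, the resulting complex is well-defined up to homotopy and hence computes the same homology regardless of the chosen diagram.

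The main obstacle I expect is the sign bookkeeping introduced by the $2$-labeled (Blanchet) facets, which is precisely the feature distinguishing $\Bfoam{2}{m}$ from the naive Bar-Natan category and the reason those signs could not be rescaled away (as discussed around the failed forgetful functor). I would need to verify that, after applying $\HOM(\id_{(\underline0,\underline2)},-)$ and setting $\dottedsphere[.5]{3}=0$, the accumulated signs assemble into a genuine isomorphism of complexes with the integral Khovanov complex---not merely a projective (up-to-sign) one. The cleanest route is to invoke Blanchet's theorem directly: since the local relations of $\Bfoam{2}{m}$ are shown (via the $\cite[\text{Lemma 3.5}]{MorrisonNieh}$-style argument) to coincide with Blanchet's universal construction, and Blanchet proves his theory recovers integral Khovanov homology, it suffices to identify our complex $\llbracket L\rrbracket_2$ with Blanchet's complex for the diagram of $L$; this identification is the content of matching the basic crossing, cup, and cap images above, and reduces the sign problem to Blanchet's already-resolved calculation rather than requiring a fresh one.
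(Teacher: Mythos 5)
Your overall strategy is genuinely different from the paper's. The paper does not pass through Blanchet's theorem at all: it computes the graded Hom spaces $\HOM(\1_{(\underline{0},\underline{2})},W)$ directly for each web $W$ appearing in $\llbracket D\rrbracket_2^r$ (each such $W$ is a union of $1$-labeled circles joined to each other and to the boundary by $2$-labeled edges, and the Hom space is free over the ground ring adjoined the $3$-dotted-sphere parameter, with basis the once- or zero-dotted $1$-labeled cups, hence of rank $2^{\#\text{ of circles}}$ after specialization), observes that the resulting cube of resolutions has exactly Khovanov's nodes with edge maps equal to $m$ and $\Delta$ up to sign, and then invokes the rectification argument of \cite{ORS}: any cube with anticommuting faces whose edges are the standard maps up to sign is isomorphic to the standard Khovanov complex. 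Your identification of the local pieces (the two resolutions of a crossing, the saddle differential, the Frobenius structure on $1$-labeled facets) is the right first step, and the paper's Hom-space computation is the precise form of your ``deleting the $2$-labeled data'' remark --- it must be carried out at the level of Hom spaces, since, as you yourself note, no forgetful $2$-functor to the Bar-Natan category exists.

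The genuine gap is in your final step. Matching the crossing, cup and cap foams with Blanchet's only up to sign --- and signs are unavoidable, since the foamation functor rescales cups and caps by weight-dependent factors such as $(-1)^{a_i}$ and the horizontal composition of complexes introduces Koszul signs --- does not identify $\llbracket L\rrbracket_2$ with Blanchet's complex, and Blanchet's theorem says nothing about this discrepancy: it is a new sign problem created by the skew Howe presentation, not one he has already resolved. What actually closes it is the observation that the squares of the cube anticommute by construction while the edges are $\pm m$ and $\pm\Delta$, so the lemma from \cite{ORS} produces an explicit isomorphism with the integral Khovanov complex. Without that lemma (or an equivalent explicit change of basis on the cube), your argument only shows that the two complexes agree up to signs on the differentials, which is exactly the projective, up-to-sign conclusion you say you want to avoid.
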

\begin{proof}
Let $D$ be a diagram of the link $L$.
The complex $\llbracket D \rrbracket_2^r$ consists of $\mf{sl}_2$ webs with no $1$-labeled boundary, i.e.
these webs consist of $1$-labeled circles joined to each other (and to the boundary) by $2$-labeled
edges. Such a web in $\llbracket D \rrbracket_2^r$ contributes a direct summand of dimension
$2^{\# \text{ of circles}}$ to the complex $Kh_2(D)$. Indeed, if $W$ is such a web then
$\HOM(\1_{(\underline{0},\underline{2})},W)$ is a free $\Bbbk[\; \dottedsphere[.5]{3} \; ]$-module
with basis given by $1$-labeled cups with one or no dots, intersecting $2$-labeled sheets transversely.

The complex $Kh_2(D)$ is hence obtained from a cube of resolutions in which the nodes of the cube are
exactly those appearing in the construction of Khovanov homology. One can check that the maps labeling
the edges of this cube are, up to a $\pm$-sign, the maps $m$ and $\Delta$ from \cite{Kh1}. Since the
squares in this cube of resolutions anti-commute (by construction), an argument from \cite{ORS}
shows that this complex is isomorphic to the complex assigned to $D$ in \cite{Kh1}.
\end{proof}

%
\subsubsection{An explicit example}
%

The invariant of the Hopf link:
\[
L =
\xy
(0,0)*{
}; \endxy \ar@<1ex>[urr]^-{-\alpha} & &
}
\end{equation}
where the foams $\alpha$ in the complex are those depicted in \eqref{sl2crossing+11} horizontally composed
with the relevant identity foams.

Note that the complex \eqref{Hopfcomplex1} is the image under $\Phi_2$ of the complex:
\[
\cal{E}_2^{(2)} \cal{E}_1^{(2)} \cal{F}_1\cal{F}_2 \cal{F}_1\cal{E}_1\cal{E}_2\cal{E}_3 \cal{T}_2 \cal{T}_2 \cal{E}_1 \cal{F}_1^{(2)} \cal{T}_2^{-1}
\cal{T}_3^{-1} \cal{T}_3 \cal{T}_2 \cal{F}_3 \cal{F}_2^{(2)}\onell{(0,2,0)}
\]
in $Kom(\Ucatc_Q(\mf{sl}_4))$. Indeed, for any tangle $\tau$, we can realize the complex
$\llbracket \tau \rrbracket_2$ as the image of a complex in the categorified quantum group by
pulling back the various pieces assigned to elementary tangles to
$Kom(\Ucatc_Q(\slm))$. One may then use the graphical
calculus of the categorified quantum group to perform calculations in link homology, see e.g.
\cite[Section 10]{Cautis}.

%
\subsubsection{The $\mf{sl}_3$ tangle invariant}
%

We define the $\mf{sl}_3$ tangle invariant $\llbracket \tau \rrbracket_3$ in $\foam{3}{m}$ in a similar
manner as above\footnote{We could define this invariant in $\Bfoam{3}{m}$ as well; however, the invariant
in $\foam{3}{m}$ is (essentially) the $\mf{sl}_3$ invariant found in the literature.}. An oriented tangle
(diagram) with no caps, cups, or crossings determines a sequence
$\mathbf{s}$ of $1$'s and $2$'s (corresponding to the
strands directed to the left and right respectively) and we map such a tangle to the identity web of the
sequence $(\underline{0},\mathbf{s},\underline{3})$, e.g.
\[
\xy
(0,0)*{
};
\endxy
\]
where, as in the $\mf{sl}_2$ case, we pre- and post-compose with the relevant braidings so that the webs map
between canonical sequences. These braidings are given by deleting the $3$-labeled edges from those
given in \eqref{sl3crossings}.

We define the invariant on left-oriented crossings by equations
\eqref{sl3crossing+11} and \eqref{sl3crossing-11}. We'd like
to define the image of the remainder of the crossings using the images of the braidings \eqref{sl3crossing+12} -
\eqref{sl3crossing-22} under the forgetful functor $\Bfoam{3}{m} \to \foam{3}{m}$; however, this assignment
would not be invariant under planar isotopy as the complexes differ by factors of $q^{\pm1}$. It is possible to
rescale the Rickard complexes $\cal{T}_i \onel$ depending on the weight $\lambda$ to fix this issue, but this
introduces unwanted scalings on the trivial braidings \eqref{sl3crossings}.
We instead follow our $\mf{sl}_2$ approach and define the remainder of the crossings in terms of the
left-oriented crossings and caps and cups.

\begin{prop}
The complex $\llbracket \tau \rrbracket_3$ assigned to a tangle diagram $\tau$, viewed in the homotopy
category of complexes of $\foam{3}{m}$, is an invariant of framed tangles.
\end{prop}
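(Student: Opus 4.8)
The plan is to follow the same strategy used for the $\mathfrak{sl}_2$ tangle invariant: it suffices to verify that $\llbracket \tau \rrbracket_3$ is unchanged, up to homotopy in $Kom(\foam{3}{m})$, under the framed tangle Reidemeister moves (see \cite{Kassel} or \cite{CK01}), namely planar isotopy together with Reidemeister II, Reidemeister III, and the framed version of Reidemeister I. I would first observe that since the non-left-oriented crossings are defined by composing the left-oriented crossings \eqref{sl3crossing+11} and \eqref{sl3crossing-11} with caps, cups, and the fixed braiding maps $\beta_{\mathbf{a}}, \beta_{\mathbf{a}}^{-1}$, and since these braiding maps are uniquely defined up to coherent isomorphism (the images of the Rickard complexes braid in any integrable $2$-representation by \cite[Section 6]{CK}), the moves that only reposition caps and cups relative to crossings reduce to checking the corresponding identities for the Rickard complexes themselves.

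Then I would push as much of the verification as possible back to the categorified quantum group. Because the foamation $2$-functor extends to $\Phi_3 \maps \Ucatc_Q(\mathfrak{sl}_m) \to \foam{3}{m}$ and $2$-functors send homotopy equivalences to homotopy equivalences, any relation among the complexes $\cal{T}_i \onel$ holding in $Kom(\Ucatc_Q(\mathfrak{sl}_m))$ descends to $Kom(\foam{3}{m})$. In particular, the invertibility $\cal{T}_i \onel \cal{T}_i^{-1} \simeq \id$ yields Reidemeister II, while the braid relations satisfied by the $\cal{T}_i \onel$ in any integrable $2$-representation \cite[Section 6]{CK} yield Reidemeister III. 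The trivial braidings \eqref{sl3crossings} then handle all moves involving the auxiliary $0$- and $3$-labeled strands, and in fact \cite[Proposition 7.9]{Cautis} shows that most of the desired Reidemeister relations already hold at the level of the categorified quantum group.

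The moves that cannot be lifted in this way --- principally the pitchfork/zig-zag moves relating a crossing to a cap or cup, and the framed Reidemeister I (curl) move --- must be checked directly in $\foam{3}{m}$. Here I would adapt the computation of \cite{BN2} to the $\mathfrak{sl}_3$ foam setting, using the local relations \eqref{sl3neckcutting}, \eqref{sl3thetafoam}, \eqref{sl3tube}, and \eqref{sl3rocket} together with their consequences \eqref{sl3bamboo}--\eqref{sl33dot} to identify the relevant complexes up to homotopy. As in the $\mathfrak{sl}_2$ case, one expects the curl to satisfy $\simeq q^{\pm 1}\,\id$, so that only the framing-preserving version of Reidemeister I holds on the nose; the writhe renormalization is then carried out separately to obtain a genuine (unframed) invariant.

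The main obstacle I anticipate is bookkeeping the signs coming from the cap and cup scalings \eqref{sl3signs} through the mixed-orientation moves. Since the non-left-oriented crossings are assembled from left-oriented crossings and cap/cup $2$-morphisms whose scalings depend delicately on the weight $\lambda$, verifying the pitchfork moves and the framed Reidemeister I amounts to confirming that these weight-dependent signs, together with the coherence isomorphisms among the chosen $\beta_{\mathbf{a}}$, conspire to cancel. This is where the bulk of the tedious but routine case analysis lives; the conceptual content is already supplied by the extension to $\Ucatc_Q(\mathfrak{sl}_m)$ and by \cite[Proposition 7.9]{Cautis}.
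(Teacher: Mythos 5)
Your proposal is correct and follows essentially the same route as the paper: the paper's own (very brief) proof, given explicitly for the $\mathfrak{sl}_2$ case and implicitly carried over to $\mathfrak{sl}_3$, reduces the claim to checking the framed tangle Reidemeister moves via Bar-Natan's argument adapted to the foam setting, simplified where possible by invoking \cite[Proposition 7.9]{Cautis} to see that most relations already hold in the categorified quantum group. Your write-up fills in the same skeleton with more detail (which moves lift through the foamation $2$-functor, which must be checked by hand), but the underlying argument is identical; the only small quibble is that in the $\mathfrak{sl}_3$ case the curl contributes $q^{\pm 2}$ rather than $q^{\pm 1}$, consistent with the renormalization $q^{2w(\tau)}$.
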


Renormalizing this invariant via $\llbracket \tau \rrbracket_3^r = q^{2w(\tau)}\llbracket \tau \rrbracket_3$
gives an invariant independent of framing which is (essentially) the same as Morrison-Nieh's
\cite{MorrisonNieh} extension of Khovanov's $\mf{sl}_3$ link homology \cite{Kh5} to tangles,
after setting the $3$-, $4$-, and $5$-dotted spheres equal to zero.

%
\subsubsection{Categorified clasps}
%

In \cite{Cautis}, Cautis showed that given any categorification of the skew Howe representations
$\Alt{q}{N}(\C^n_q\otimes \C^m_q)$, one obtains a categorification of $\sln$ clasps, the $\sln$ analogs of the
Jones-Wenzl projectors, using the higher representation theory of the categorified quantum group.
He conjectured that these representations could be categorified in the foam setting and that this construction
would give the categorified Jones-Wenzl projectors from \cite{CoopKrush} and \cite{Roz} and the
categorified $\mf{sl}_3$ projectors from \cite{Rose}.
Although the foam categories only categorify the intertwiners between such representations (and not the
representations themselves), Cautis' methods indeed give a uniform construction of categorified clasps
in the $\mf{sl}_2$ and $\mf{sl}_3$ foam $2$-categories. We now recall the details of this construction.

Fix a reduced expression $w=s_{i_1} \dots s_{i_k}$ for the longest word $w$ in the Weyl group for $\slm$ and
consider the complex $\cal{T}_{w} \onel:= \cal{T}_{i_1} \dots \cal{T}_{i_k}\onel$ in $Kom(\Ucatc_Q(\slm))$;
this complex gives the invariant assigned to a ``half-twist'' tangle. Cautis shows that the images of the complexes
$\cal{T}_{w}^{2k} \onel$ in any integrable $2$-representation stabilize as $k\to \infty$.
Denote the image of $\cal{T}_w \onel$ in such a $2$-representation by $\T_w \1_{\l}$ and let
$\T_w^{\infty} \1_{\l} := \lim_{k\to \infty} \T_{w}^{2k} \onel$.The complexes $\T_w^{\infty} \1_{\l}$
are idempotents (with respect to horizontal composition of complexes) and give categorified
clasps in any $2$-representation categorifying $\Alt{q}{N}(\C^n_q\otimes \C^m_q)$.

We first consider the $\mf{sl}_2$ case. Let $\sf{P}_m^+ := \T_w^{\infty} \1_{(0,\ldots,0)}$ in $\Bfoam[m]{2}{m}$
(which is a complex of webs mapping from the sequence $(1,\ldots,1)$ to itself), then
we have the following result, which should be viewed as the Blanchet foam analog of
\cite[Theorem 3.2]{CoopKrush}.
\begin{prop}
The complex $\sf{P}_m^+$ satisfies the following properties:
\begin{enumerate}
\item $\sf{P}_m^+$ is supported only in positive homological degree. \label{sl2proj1}
\item The identify web $\id_{(1,\ldots,1)}$ appears only in homological degree zero. \label{sl2proj2}
\item $\sf{P}_m^+$ annihilates the webs
\[
\xy
(0,0)*{\begin{tikzpicture} [decoration={markings, mark=at position 0.6 with {\arrow{>}}; },scale=.5]
 \draw [double, postaction={decorate}] (3,1) -- (1.75,1);
 \draw [very thick, postaction={decorate}] (1.25,2) -- (0,2);
\draw [very thick, postaction={decorate}] (1.75,1) -- (0,1);
 \draw [very thick, postaction={decorate}] (1.75,1) -- (1.25,2);
  \end{tikzpicture}};
\endxy
\quad , \quad
\xy
(0,0)*{\begin{tikzpicture} [decoration={markings, mark=at position 0.6 with {\arrow{>}}; },scale=.5]
 \draw [double, postaction={decorate}] (3,2) -- (1.75,2);
 \draw [very thick, postaction={decorate}] (1.75,2) -- (0,2);
\draw [very thick, postaction={decorate}] (1.25,1) -- (0,1);
 \draw [very thick, postaction={decorate}] (1.75,2) -- (1.25,1);
\end{tikzpicture}};
\endxy
\quad , \quad
\xy
(0,0)*{\begin{tikzpicture} [decoration={markings, mark=at position 0.6 with {\arrow{>}}; },scale=.5]
 \draw [very thick, postaction={decorate}] (3,2) -- (1.75,2);
 \draw [very thick, postaction={decorate}] (3,1) -- (1.25,1);
\draw [double, postaction={decorate}] (1.25,1) -- (0,1);
 \draw [very thick, postaction={decorate}] (1.75,2) -- (1.25,1);
\end{tikzpicture}};
\endxy
\quad , \quad
\xy
(0,0)*{\begin{tikzpicture} [decoration={markings, mark=at position 0.6 with {\arrow{>}}; },scale=.5]
 \draw [very thick, postaction={decorate}] (3,2) -- (1.25,2);
 \draw [very thick, postaction={decorate}] (3,1) -- (1.75,1);
\draw [double, postaction={decorate}] (1.25,2) -- (0,2);
 \draw [very thick, postaction={decorate}] (1.75,1) -- (1.25,2);
\end{tikzpicture}};
\endxy
\]
in $\Bfoam[m]{2}{m}$, up to homotopy. \label{sl2proj3}
\end{enumerate}
\end{prop}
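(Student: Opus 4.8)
The plan is to obtain all three properties from Cautis' theory of categorified clasps \cite{Cautis}, transported into the foam setting through the foamation $2$-functor. First I would record that, by Proposition \ref{prop_sl2} together with its extension to $\Ucatc_Q(\slm)$, the $2$-category $\Bfoam[m]{2}{m}$ is an integrable $2$-representation of $\Ucatc_Q(\slm)$ whose nonzero weights index the $\slm$-weight spaces of $\bigwedge_q^m(\C^2 \otimes \C^m)$ and for which $\HOM(\onel,\onel)$ is one-dimensional in degree zero (condition \ref{co:hom} of Theorem \ref{thm_CL}, verified in Proposition \ref{prop_sl2}). Hence the hypotheses under which Cautis constructs $\T_w^{\infty}\onel$ are satisfied, the limit $\sf{P}_m^+ = \T_w^{\infty}\1_{(0,\ldots,0)}$ exists, and it is a categorified clasp. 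All three assertions are statements about this complex; note that the weight $(0,\ldots,0)$ corresponds to the all-$1$'s sequence $(1,\ldots,1)$.

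For the homological support and the placement of the identity web, I would argue at the level of the Rickard complexes. Each $\cal{T}_i\onel$ is supported in non-negative homological degree with degree-zero term $\cal{E}_i^{(-\l_i)}\onel$ when $\l_i \le 0$ (resp. $\cal{F}_i^{(\l_i)}\onel$ when $\l_i\ge 0$); since $\l_i = 0$ for every $i$ at the weight $(0,\ldots,0)$, this degree-zero term is simply $\onel$. Non-negativity of homological support, and the fact that the degree-zero part of a horizontal composite is the composite of the degree-zero parts, are both preserved under composition, under the powers $\cal{T}_w^{2k}$, and under the stabilization limit; this yields the support claim and identifies the degree-zero term of $\sf{P}_m^+$ as $\1_{(1,\ldots,1)}$. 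For the claim that $\id_{(1,\ldots,1)}$ occurs only in homological degree zero, I would observe that every web appearing in strictly positive homological degree is the image of a $1$-morphism of the form $\cal{E}_i^{(a)}\cal{F}_i^{(b)}\cdots$ with at least one lowering factor, and so contains a $2$-labeled edge; such a web is never the identity web. Thus $\id_{(1,\ldots,1)}$ can appear only in degree zero, where by the one-dimensionality of $\HOM(\1_{(1,\ldots,1)},\1_{(1,\ldots,1)})$ in degree zero it occurs exactly once.

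The annihilation of the four turnback webs is the main obstacle, as it is the categorified Jones--Wenzl relation $p_m e_i = 0$. The four webs are precisely the images under $\Phi_2$ of the cup and cap $1$-morphisms of the tangle construction, that is, of $\cal{F}_i\onel$ and $\cal{E}_i\onel$ at the weight $(0,\ldots,0)$ and its neighbors (dressed, as usual, by the braidings $\beta_{\mathbf{a}}^{\pm 1}$ that land in canonical sequences). The assertion that $\sf{P}_m^+$ annihilates them is therefore equivalent to the absorbing identities $\T_w^{\infty}\onel\,\F_i \simeq 0$ and $\E_i\,\T_w^{\infty}\onel \simeq 0$ (together with their biadjoints) in $\Bfoam[m]{2}{m}$. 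These are exactly the highest-weight properties of Cautis' infinite twist: $\T_w^{\infty}\onel$ categorifies the projection of $(\C^2_q)^{\otimes m}$ onto its top summand, which is annihilated by any operator creating a turnback. I would invoke Cautis' results, which establish these absorbing identities for the clasp in any $2$-representation meeting his hypotheses, and apply them to $\Bfoam[m]{2}{m}$; the delicate part will be the bookkeeping, namely matching each turnback web with the correct generator at the correct weight (tracking which $2$- and $0$-labeled edges are present) and checking that pre- and post-composition with $\beta_{\mathbf{a}}^{\pm 1}$ does not obstruct the vanishing.

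Finally, since $\sf{P}_m^+$ will then satisfy all three properties, it agrees with the Cooper--Krushkal projector by the uniqueness half of their characterization \cite{CoopKrush}, which is the intended identification.
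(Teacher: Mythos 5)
Your proposal follows essentially the same route as the paper's proof: properties (1) and (2) are verified by direct inspection of the Rickard complexes at the weight $(0,\ldots,0)$ (where each $\cal{T}_i\onel$ begins with $\onel$ in homological degree zero and all higher terms involve $2$-labeled edges), and property (3) is obtained by invoking Cautis' absorbing/highest-weight identities for the infinite twist, which is exactly the citation the paper makes (with the adaptation of the arguments of Rose as an alternative). Your additional spelling-out of the bookkeeping and the closing identification with the Cooper--Krushkal projector are consistent with the remarks surrounding the proposition.
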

\begin{proof}
Properties \eqref{sl2proj1} and \eqref{sl2proj2} follow via inspection. Property \eqref{sl2proj3} follows from
arguments in \cite[Section 5]{Cautis} or by adapting arguments from \cite{Rose} to the $\mf{sl}_2$ foam setting.
\end{proof}
It follows that $\sf{P}_m^+$ categorifies the analog of the Jones-Wenzl projector $p_m$ in the category of
Blanchet webs. Using the foamation $2$-functor $\Phi_{CMW}$,
the above procedure also gives a construction of the categorified Jones-Wenzl projectors from
\cite{CoopKrush} and \cite{Roz} in the CMW foam setting.

In the $\mf{sl}_3$ case, let $\mathbf{s}$ denote a sequence of $1$'s and $2$'s of length $m$; let
$\#\mathbf{s}_1$ denote the number of $1$'s and $\#\mathbf{s}_2$ the number of $2$'s in
$\mathbf{s}$. Define $\sf{P}_{\mathbf{s}}^+ := \T_w^{\infty} \1_{\l}$ in $\foam[m+\#\mathbf{s}_2]{3}{m}$ where
$\lambda$ maps to $\mathbf{s}$ under $\Phi_3$.
\begin{prop}
The complex $\sf{P}_{\mathbf{s}}^+$ is the categorified clasp $\tilde{P}_{\mathbf{s}}$ constructed
in \cite{Rose}.
\end{prop}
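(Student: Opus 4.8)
The plan is to characterize $\tilde{P}_{\mathbf{s}}$ from \cite{Rose} by a universal property and then verify that the infinite twist $\sf{P}_{\mathbf{s}}^+ = \T_w^{\infty} \1_{\l}$ satisfies it, concluding by a uniqueness statement. Recall that the categorified $\mathfrak{sl}_3$ clasps of \cite{Rose} are themselves built as stable limits of half-twists and are characterized, in the spirit of Cooper-Krushkal, as the essentially unique idempotent complexes that are supported in non-negative homological degree, contain the identity web exactly once and in homological degree zero, and annihilate all \emph{turnback} webs, i.e. webs factoring through a boundary sequence of strictly lower weight. First I would record that $\sf{P}_{\mathbf{s}}^+$ is a (horizontal) idempotent complex: this is immediate from Cautis' stabilization and idempotency results for the infinite twist \cite[Section 5]{Cautis}, which apply to any integrable $2$-representation and hence to the foam $2$-representation $\Phi_3$ constructed above.

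Next I would verify the three defining properties. Positivity of the homological support, and the appearance of the identity web $\id_{\mathbf{s}}$ only in homological degree zero, follow by direct inspection of the complexes $\cal{T}_i \onel$ and their composites, exactly as in the proof of the $\mathfrak{sl}_2$ Proposition for $\sf{P}_m^+$: the leftmost term of each Rickard complex is a divided-power $1$-morphism sitting in homological degree zero, and all subsequent terms strictly increase the homological degree. The essential property is that $\sf{P}_{\mathbf{s}}^+$ kills turnbacks. Since the images of the Rickard complexes satisfy the braid relations in any integrable $2$-representation \cite{CK} and the infinite twist absorbs any additional crossing up to homotopy, a turnback web composed with $\sf{P}_{\mathbf{s}}^+$ can be slid into the twist and shown to be null-homotopic; this is precisely the content of the arguments in \cite[Section 5]{Cautis} and of the corresponding statements in \cite{Rose}, which adapt to $\foam{3}{m}$ once one translates via the foamation functor $\Phi_3$.

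Finally, I would invoke uniqueness: any two complexes satisfying the three properties above are canonically homotopy equivalent, as established in \cite{Rose}. Applying this to $\sf{P}_{\mathbf{s}}^+$ and $\tilde{P}_{\mathbf{s}}$ yields the claimed identification. The main obstacle I anticipate is the turnback-killing step, both because it requires the full strength of Cautis' infinite-twist machinery and because it must be carried out in the $m$-sheeted ladder-foam category $\foam{3}{m}$ rather than in the standard $3\cat{Foam}$ category in which \cite{Rose} works; reconciling the two settings, matching the half-twist complex of \cite{Rose} with the image under $\Phi_3$ of $\cal{T}_w \onel$ and checking that the two universal characterizations agree under this dictionary, is where the real care is needed. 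A secondary subtlety is the bookkeeping of the $3$- and $0$-labeled boundary edges together with the canonical-sequence identifications $\beta_{\mathbf{a}}$, which must be handled consistently so that the two projectors are compared as endomorphisms of the same object.
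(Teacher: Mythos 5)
Your proposal is correct in principle but takes a far heavier route than the paper, which observes that there is essentially nothing to prove: the categorified clasps $\tilde{P}_{\mathbf{s}}$ of \cite{Rose} are \emph{constructed} as exactly the same stable limit $\lim_{k\to\infty}\T_w^{2k}\1_{\l}$ of even powers of the half-twist complex in the (ladder) foam setting, so $\sf{P}_{\mathbf{s}}^+$ and $\tilde{P}_{\mathbf{s}}$ coincide by definition, up to the purely notational dictionary that the $+$'s and $-$'s labelling sequences in \cite{Rose} correspond to the $1$'s and $2$'s here. You instead characterize $\tilde{P}_{\mathbf{s}}$ axiomatically (non-negative homological support, identity web only in degree zero, annihilation of turnbacks) and match $\sf{P}_{\mathbf{s}}^+$ against that characterization via uniqueness. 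That argument would go through — the properties you list do hold for $\sf{P}_{\mathbf{s}}^+$ by Cautis' infinite-twist machinery, and the uniqueness statement is available in \cite{Rose} — and it has the virtue of being robust (it would identify $\sf{P}_{\mathbf{s}}^+$ with \emph{any} complex satisfying the universal property, not just one built by the same recipe). But it front-loads exactly the technical work (turnback-killing, reconciling the two ambient $2$-categories) that the definitional identification lets you skip entirely; the "main obstacle" you anticipate dissolves once you notice the two constructions are literally the same complex.
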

There is nothing to prove here; the categorified $\mf{sl}_3$ clasps in \cite{Rose} are constructed precisely as
the limit of the complexes $\T_w^{2k}\1_{\l}$ as $k\to \infty$. Note that the $+$'s and $-$'s in the sequences in that
work correspond to our $1$'s and $2$'s, respectively.

Having constructed categorified clasps, we can extend our $\mf{sl}_2$ and $\mf{sl}_3$ tangle invariants to give categorified
invariants of framed tangles in which each component is labeled by an irreducible representation. This construction is detailed in many
places in the literature, in particular in \cite{CoopKrush}, \cite{Rose}, and \cite{Cautis}, so we will be brief. Given a framed tangle $\tau$
with components labeled by irreducible representations, choose for each component a tensor product of fundamental
representations having the corresponding irreducible as a highest weight subrepresentation. Assign to the tangle the complex assigned
to a cabling of the tangle (we use here the fact that $\tau$ is framed) with the
categorified projector inserted along the cabling. The number of strands in the cabling of each component
(and the direction of such strands in the $\mf{sl}_3$ case) as well as which projector $\sf{P}^+$ is inserted is given by the relevant
tensor product of fundamental representations; this corresponds to a sequence of $1$'s in the $\mf{sl}_2$ case and a sequence of
$1$'s and $2$'s in the $\mf{sl}_3$ case.

One can show (see \cite{CoopKrush}, \cite{Rose}, or \cite{Cautis}) that the above invariant doesn't depend on
the choice of where the projector is inserted or which tensor product of fundamentals is used (up to equivalence in the case that the
tangle is not a link) and gives a categorification of the Reshetikhin-Turaev invariant of framed tangles.


%
\subsection{Comparing knot homologies} \label{sec_compare}
%

Let  $\Phi \maps \UcatD_Q(\slm) \to \cal{K}$ be any 2-representation giving a categorification of
$\bigwedge^N_q(\C^n_q\otimes \C^m_q)$. Cautis shows that for $N$ and $m$
sufficiently large, this 2-representation
assigns to any framed, oriented link $K$, a complex of 1-morphisms $\Psi(K) \in \End(Kom(\Phi(\l)))$ where
$\l$ is the highest weight in $\bigwedge^N_q(\C^n_q\otimes \C^m_q)$~\cite[Section 7.5]{Cautis}.  His framework does not require the full structure of a 2-representation of $\UcatD_Q(\slm)$, but rather the weaker data encoded in what he calls a categorical 2-representation.  This weaker action is more like the data described in Theorem~\ref{thm_CL} without requiring the KLR action.
The KLR relations greatly simplify the resulting complexes; in particular, they imply analogs of the higher Serre relation~\cite{Stosic} and commutativity relations for divided powers~\cite{KLMS}.  Using these relations, the complex $\Psi(K)$ associated to a link $K$ can be reduced to a complex that only involves direct sums of the identity 1-morphism $\1_{\l}$ of $\Phi(\l)$, with various grading shifts. One does not actually
need to know that the 2-representation $\cal{K}$ categorifies $\bigwedge^N_q(\C^n_q\otimes \C^m_q)$; it suffices that the nonzero weight spaces of
$\bigwedge^N_q(\C^n_q\otimes \C^m_q)$ parameterize the nonzero objects in $\cal{K}$.

Applying the functor $\HOM(\1_{\l}, -)$ to the complex $\Psi(K)$ maps it to a complex of
graded vector spaces. The number of $\1_{\l}$ summands and their grading shifts are formally determined
by the categorified quantum group, hence so are the vector spaces appearing in the complex.
The differentials depend only on the map $\HOM(\onel, \onel) \to \HOM(\1_{\l}, \1_{\l})$, so
it follows that this map completely determines the link homology theory.


When the graded algebra $A:= \HOM(\1_{\l}, \1_{\l})$ is $1$-dimensional in degree zero and
$0$ in all other degrees, only one such map exists, hence all constructions of $\sln$ link
homology satisfying this condition are equivalent.
After quotienting by the $3$-dotted sphere in the $\mf{sl}_2$ case and the $3$-, $4$-, and $5$-dotted spheres in
the $\mf{sl}_3$ case, the foam $2$-categories satisfy this condition.

This observation gives a method for showing that Cautis-Kamnitzer link homology is equivalent to
Khovanov-Rozansky homology.
Using constructions from previous work~\cite{CKL3,CKL4}, Cautis describes (weak) categorical 2-representations on derived categories $\cal{K}_{Gr,m}$ of coherent sheaves on varieties arising as orbits in the affine Grassmannian, as well as on coherent sheaves on Nakajima quiver varieties $\cal{K}_{Q,m}$.
Both of these categorical 2-representations are conjectured by Cautis, Kamnitzer, and Licata to extend to 2-representations of $\UcatD_Q(\slm)$.  By the results of \cite{CLau} it suffices to prove that the KLR algebras act; this was done in the $m=2$ case in \cite{CKL2} and will be generalized to symmetric Kac-Moody algebras
(in particular for arbitrary $m$)
in~\cite{Ca}. Moreover, in this setting the algebra $A$ satisfies the $1$-dimensionality condition, so this will
show that the link homology theory from \cite{CK02} fits into the framework described above.

The results from this paper will hence show that the foam based constructions of $\sln$ link homology agree
with the Cautis-Kamnitzer construction for
$n=2,3$. This re-proves Theorem~$8.2$ from \cite{CK01} and pairs with the
results from \cite{MV} to give the $n=3$ case of Conjecture~$6.4$ from \cite{CK02} equating $\mf{sl}_3$
Cautis-Kamnitzer and Khovanov-Rozansky link homology. In the sequel to this paper, will we establish
the analogous results for general $n$.

%
\subsection{Deriving foam relations from categorified quantum groups} \label{sec_deriving_rels}
%

In \cite{CKM}, Cautis, Kamnitzer, and Morrison showed that the relations on $\mf{sl}_n$ webs could be derived via skew Howe duality
from the relations in $\U_q(\slm)$.  Here we categorify this result in the $n=2,3$ case to show that many foam relations can be deduced
from the assignments defining the foamation 2-functors $\Phi_2$ and $\Phi_3$.  The main result of this section is that all $\mf{sl}_3$ foam relations, all CMW $\mf{sl}_2$ foam relations (assuming a strong form of
locality), and many Blanchet $\mf{sl}_2$ foam relations
follow from relations in the categorified quantum group $\UcatD_Q(\slm)$.
In a follow-up paper, we study foam categories for arbitrary $n$ using this framework~\cite{LQR2}.

%
\subsubsection{Blanchet $\mf{sl}_2$ foam relations}
%

Since the Blanchet foams arising as images under our $2$-functors must contain both $1$- and $2$-labeled
facets (unless they are identity foams) and always bound webs whose edges are
oriented leftward, we cannot expect to recover all defining relations from the relations in $\Ucat_Q(\slm)$.
For example, we have no hope of recovering the $1$- and $2$-labeled neck-cutting relations or closed
foam relations.

There are nevertheless numerous foam relations arising from the quantum group relations, which we list below. Note that some of the relations we obtain actually slightly generalize Blanchet's original relations, using $2$- and $3$-dotted enhanced spheres as graded parameters.

\begin{itemize}
 \item The nilHecke relation \eqref{eq_nil_dotslide} implies the enhanced neck-cutting relation:


\begin{align*}
\xy
(0,0)*{
};
\endxy \quad = \quad \bambooRHSone[.6] \quad - \quad \bambooRHStwo[.6]
\]
which can be viewed as another enhanced version of neck-cutting.

\item Degree zero bubbles in weight $\pm 1$ with $m=2$ and $N=3$ imply relation \eqref{sl2Tube_2}.

\item Relation \eqref{eq_r3_hard-gen} implies the foam relation \eqref{sl2sq2}.
\end{itemize}

Finally, we comment on the behavior of a twice-dotted foam facet. When $\lambda=-2$, we compute
\[
\xy 0;/r.17pc/:
    (0,-10);(0,10) **\dir{-} *\dir{>};
    (6,2)*{\lambda};
    (0,0)*{\bullet}+(-3,1)*{\scs 2};
\endxy
\;\; =\;\;
 \xy 0;/r.17pc/:
    (-8,5)*{}="1";
    (0,5)*{}="2";
    (0,-5)*{}="2'";
    (8,-5)*{}="3";
    (-8,-10);"1" **\dir{-}?(.7)*\dir{>};;
    "2";"2'" **\dir{-} ?(.5)*\dir{>};
    "1";"2" **\crv{(-8,12) & (0,12)} ;
    "2'";"3" **\crv{(0,-12) & (8,-12)};
    "3"; (8,10) **\dir{-} ?(.31)*\dir{>};
    (13,-9)*{\lambda};
    (8,3)*{\bullet}+(3,1)*{\scs 2};
    (10,8)*{\scs };
    (-10,-8)*{\scs };
    \endxy
    \;\; = \;\; -\;\;
 \vcenter{   \xy 0;/r.17pc/:
    (4,6)*{};(4,4)*{} *\dir{-} ?(0)*\dir{>};
    (4,4)*{};(-4,-4)*{} **\crv{(4,1) & (-4,-1)}?(1)*\dir{<};
    (-4,4)*{};(4,-4)*{} **\crv{(-4,-) & (4,-1)}?(1)*\dir{>};?(0)*\dir{>};
    (4,-4)*{};(-4,-12)*{} **\crv{(4,-7) & (-4,-9)};
    (-4,-4)*{};(4,-12)*{} **\crv{(-4,-7) & (4,-9)}?(1)*\dir{<};
    (-12,4)*{};(-12,-14)*{} **\dir{-} ?(.3)*\dir{<};
    (4,-12)*{};(-4,-12)*{} **\crv{(4,-16) & (-4,-16)};
    (-4,4)*{};(-12,4)*{} **\crv{(-4,8) & (-12,8)};
    (-4,-12)*{\bullet}+(-3,1)*{\scs 2};
    (10,-9)*{\lambda};
 \endxy}
 \;\; + \;\;
 \sum_{g_1+g_2+g_3=1}
 \xy 0;/r.17pc/:
  (0,-12)*{}; (0,12)*{} **\dir{-} ?(1)*\dir{>};
  (0,0)*{\bullet}+(-3,1)*{\scs g_1};
    (12,6)*{\icbub{\scs  -3 +g_2}{i}};
    (10,-8)*{\iccbub{\scs 2+g_3}{i}};
    (18,-9)*{\lambda};
 \endxy
\]
The term on the left in the last part is sent to zero under the foamation functor. We obtain the following:
\[
\Phi_2\left(\;\;
\xy 0;/r.17pc/:
    (0,-12);(0,12) **\dir{-} *\dir{>};
    (6,2)*{\lambda};
    (0,0)*{\bullet}+(-3,1)*{\scs 2};
\endxy \;\;
\right)
\quad =\quad
\Phi_2\left( \;\;
 \xy 0;/r.17pc/:
  (0,-12)*{}; (0,12)*{} **\dir{-} ?(1)*\dir{>};
    (10,0)*{\iccbub{\scs 3}{}};
    (16,-9)*{\lambda};
 \endxy\;\;
+
\;\;
 \xy 0;/r.17pc/:
  (0,-12)*{}; (0,12)*{} **\dir{-} ?(1)*\dir{>};
   (10,7)*{\iccbub{\scs 2}{}};
   (10,-7)*{\iccbub{\scs 2}{}};
    (16,-9)*{\lambda};
 \endxy\;\;
 +
  \;\;
 \xy 0;/r.17pc/:
  (0,-12)*{}; (0,12)*{} **\dir{-} ?(1)*\dir{>};
      (0,0)*{\bullet};
    (10,0)*{\iccbub{\scs 2}{}};
    (16,-9)*{\lambda};
 \endxy\;\;
 \right)
\]
providing a way to decompose twice-dotted foam facets using the image of $2$- and $3$-dotted bubbles.

%
\subsubsection{CMW $\mf{sl}_2$ foam relations}
%

The local relations for disorientation seams, together with the neck-cutting relation and evaluations of (dotted) spheres
constitute the only CMW foam relations.
Since the CMW seam relations \eqref{CMWseam} involve complex coefficients, we cannot expect to derive
them from
the categorical skew Howe action of $\Ucat_Q(\slm)$.
We hence must impose the additional requirement that some relations can
be performed completely locally (which in practice says that some relations have a ``square root'').
We will show that with this additional assumption we can derive a slightly more general CMW foam
$2$-category in which both the $2$- and $3$-dotted spheres are (graded) parameters.

\begin{itemize}
 \item Seam relations: Considering \eqref{eq_r3_hard-gen} with $a_i=2$ and $a_{i+1}=1$, we find that the first term of the relation
 is mapped to zero and the remaining foams give
\begin{equation}
 \xy
(0,0)*{
};
\endxy \nn
\end{equation}
with $\omega'$ a primitive fourth root of the unity (a priori, this is not required to equal the fourth root
$\omega$ from the section \ref{CMWsec}).

Having fixed a value for $\omega'$, the values of degree-zero bubbles in weight $\pm 1$ with $m=2$ and $N=3$
give that a circular seam squares to give $-1$ (in both cases).
Again assuming complete locality, this gives that a circle can be removed from a foam at the cost of multiplying by a primitive
fourth root of unity. Using the above, we determine:
\begin{equation}
 \xy
(0,0)*{
\begin{tikzpicture} [fill opacity=0.2,  decoration={markings,
                        mark=at position 0.5 with {\arrow{>}};    }, scale=.6]
	\draw[fill=red] (0,0) rectangle (2,2);
	\draw [ultra thick,red] (.5,1) arc (180:-180:.5);
	\draw [dash pattern= on 1 pt off 3 pt, ultra thick,red] (.6,1) arc (180:-180:.4);
\end{tikzpicture}};
\endxy
\;\; =\;\;\omega'\;\;
\xy
(0,0)*{
\begin{tikzpicture} [fill opacity=0.2,  decoration={markings,
                        mark=at position 0.5 with {\arrow{>}};    }, scale=.6]
	\draw[fill=red] (0,0) rectangle (2,2);
\end{tikzpicture}};
\endxy
 \qquad , \qquad
\xy
(0,0)*{
\begin{tikzpicture} [fill opacity=0.2,  decoration={markings,
                        mark=at position 0.5 with {\arrow{>}};    }, scale=.6]
	\draw[fill=red] (0,0) rectangle (2,2);
	\draw [ultra thick,red] (.5,1) arc (180:-180:.5);
	\draw [dash pattern= on 1 pt off 3 pt, ultra thick,red] (.4,1) arc (180:-180:.6);
\end{tikzpicture}};
\endxy
 \;\; =\;\;-\omega' \;\;
\xy
(0,0)*{
\begin{tikzpicture} [fill opacity=0.2,  decoration={markings,
                        mark=at position 0.5 with {\arrow{>}};    }, scale=.6]
	\draw[fill=red] (0,0) rectangle (2,2);
\end{tikzpicture}};
\endxy \nn \quad .
\end{equation}

\item Closed foam relations:
Since negative degree bubbles are zero, we deduce that a non-dotted sphere is zero by considering the image of (non-dotted) bubbles
in weight $\pm 2$ with $N=2$ and $m=2$.
The values of once-dotted bubbles in weight $\pm 2$ give the value of a once-dotted sphere,
depending on the value of $\omega$. Choosing $\omega'=\omega$ (which we do for the duration),
we obtain that a once-dotted sphere has value $1$.
After we deduce a neck-cutting relation, we will be able to evaluate $n$-dotted spheres with $n\geq 4$ in terms of
spheres with fewer dots.

\item Neck-cutting: The NilHecke relation \eqref{eq_nil_dotslide} gives us two neck-cutting relations:
\begin{align*}
\xy
(0,0)*{
};
\endxy
\;\;\right)
\]
i.e. the following deformation of the neck-cutting relation:
\[
\cylinder[.6] \quad = \quad \slthncfour[.6] \quad + \quad \slthncfive[.6] \quad - \quad
\dottedsphere[.6]{2} \capcup[.6] \quad .
\]
Specializing $\dottedsphere[.6]{2}=0$, we recover the foam $2$-category from \cite{CMW}.
\end{itemize}

%
\subsubsection{$\mathfrak{sl}_3$ foam relations}
%

In the $\mathfrak{sl}_3$ setting, all foam relations are consequences of the relations in $\Ucat_Q(\slm)$:
\begin{itemize}
\item Dotted spheres: The values in relation \eqref{sl3closedfoam} are recovered by the value of
$\xy 0;/r.15pc/:
(0,0)*{\icbub{\alpha}{}};
 \endxy$
in weight $3$ with $m=2$ and $N=3$ for $\alpha=0,1,2$.
\item Neck-cutting: The image of equation \eqref{eq_ident_decomp-ngeqz} in weight $3$ and
with $m=2$ and $N=3$
gives the neck-cutting relation \eqref{sl3neckcutting} (note that the cup gives a $-1$ coefficient and the
cap gives $+1$). One can obtain the simpler neck-cutting relations found in the literature by quotienting
the categorified quantum group by the relevant bubbles (or equivalently passing to the quotient of the foam
category where we set the $3$- and $4$-dotted spheres equal to zero).
\item Equation \eqref{sl3tube} is a consequence of the NilHecke relation \eqref{eq_nil_dotslide}.
\item Equation \eqref{sl3rocket} is a consequence of equation \eqref{eq_r3_hard-gen}.
\item $\Theta$-foams:
For $\alpha+\beta \leq3$, the values of:
\begin{equation}\label{nestedtheta}
\xy 0;/r.16pc/:
 (-6,0)*{};
  (6,0)*{};
  (14,-10)*{\lambda};
  (-4,0)*{}="t1";  (4,0)*{}="t2";
 "t2";"t1" **\crv{(4,6) & (-4,6)}; ?(0)*\dir{<}
   ?(.96)*\dir{<} ?(.3)*\dir{}+(2,2)*{\scs i};
  "t2";"t1" **\crv{(4,-6) & (-4,-6)};
  ?(.7)*\dir{}+(0,0)*{\bullet}+(2,-3)*{\scs  \beta};
  (-12,0)*{}="t1";  (12,0)*{}="t2";
  "t2";"t1" **\crv{(12,15) & (-12,15)}; ?(0)*\dir{>} ?(1)*\dir{>} ?(.3)*\dir{}+(2,2)*{\scs \;\; i+1};
  "t2";"t1" **\crv{(12,-15) & (-12,-15)};
  ?(.7)*\dir{}+(0,0)*{\bullet}+(2,-4)*{\scs \alpha};
  \endxy
\qquad \text{and} \qquad
\xy  0;/r.16pc/:
 (-6,0)*{};
  (6,0)*{};
  (14,-10)*{\mu};
  (-4,0)*{}="t1";  (4,0)*{}="t2";
 "t2";"t1" **\crv{(4,6) & (-4,6)}; ?(.02)*\dir{>}
   ?(1)*\dir{>} ?(.3)*\dir{}+(2,2)*{\scs i};
  "t2";"t1" **\crv{(4,-6) & (-4,-6)};
  ?(.7)*\dir{}+(0,0)*{\bullet}+(2,-3)*{\scs \alpha};
  (-12,0)*{}="t1";  (12,0)*{}="t2";
  "t2";"t1" **\crv{(12,15) & (-12,15)}; ?(0)*\dir{<} ?(1)*\dir{<} ?(.3)*\dir{}+(2,2)*{\scs \;\; i+1};
  "t2";"t1" **\crv{(12,-15) & (-12,-15)};
  ?(.7)*\dir{}+(0,0)*{\bullet}+(2,-4)*{\scs \beta};
  \endxy
\end{equation}
when $\lambda$ maps to a sequence with $a_i=0$, $a_{i+1}=3$ and $a_{i+2}=0$ and
$\mu$ maps to a sequence with $a_i=3$, $a_{i+1}=0$ and $a_{i+2}=3$
give the values in relation \eqref{sl3thetafoam} when $\alpha+\beta \leq 3$ and $\gamma=0$.
In fact, these values, together with the remainder of the foam relations, determine the values of
all theta-foams.

Using the values of theta-foams we have already determined, we can deduce the blister relations:
\[
\xy
(0,0)*{

 };
 \endxy
\]
from the neck-cutting and dotted sphere relations. The equality
\[
\xy 0;/r.16pc/:
 (-6,0)*{};
  (6,0)*{};
  (14,-10)*{\lambda};
  (-4,0)*{}="t1";  (4,0)*{}="t2";
 "t2";"t1" **\crv{(4,6) & (-4,6)}; ?(0)*\dir{<}
   ?(.96)*\dir{<} ?(.3)*\dir{}+(2,2)*{\scs i};
  "t2";"t1" **\crv{(4,-6) & (-4,-6)};
  ?(.7)*\dir{}+(0,0)*{\bullet}+(2,-3)*{\scs  2};
  (-12,0)*{}="t1";  (12,0)*{}="t2";
  "t2";"t1" **\crv{(12,15) & (-12,15)}; ?(0)*\dir{>} ?(1)*\dir{>} ?(.3)*\dir{}+(2,2)*{\scs \;\; i+1};
  "t2";"t1" **\crv{(12,-15) & (-12,-15)};
  ?(.7)*\dir{}+(0,0)*{\bullet}+(2,-4)*{\scs 2};
  \endxy
  \;\; = \;\;
  \xy 0;/r.16pc/:
  (-8,0)*{\iccbub{2}{i+1 \;\;\;\;}};
  (8,0)*{\icbub{3}{i}};
  (0,10)*{\lambda};
  \endxy
  \;\; - \;\;
    \xy 0;/r.16pc/:
  (-8,0)*{\iccbub{3}{i+1 \;\;\;\;}};
  (8,0)*{\icbub{2}{i}};
  (0,10)*{\lambda};
  \endxy
\]
implies that
$
\xy
(0,0)*{
};
(20,0)*{\dottedsphere[.5]{3}};
\endxy
\quad = 0
\end{equation}
(compare to \cite[Figure 17]{Kh5}). Using this relation, in conjunction with
\eqref{sl33dot}, we can evaluate the remaining theta-foams from equation \eqref{sl3thetafoam}.
\end{itemize}
Note that we may also recover many of the relations which follow as consequences of the defining relations:
\begin{itemize}
 \item Equation \eqref{sl3bamboo} is a consequence of equation \eqref{eq_r2_ij-gen}.
 \item Using \eqref{eq_ident_decomp-ngeqz} with $\lambda_i=1$ and $N_i=3$, we compute
\[
 \xy 0;/r.17pc/:
    (-8,5)*{}="1";
    (0,5)*{}="2";
    (0,-5)*{}="2'";
    (8,-5)*{}="3";
    (-8,-10);"1" **\dir{-}?(.7)*\dir{>};;
    "2";"2'" **\dir{-} ?(.5)*\dir{>};
    "1";"2" **\crv{(-8,12) & (0,12)} ;
    "2'";"3" **\crv{(0,-12) & (8,-12)};
    "3"; (8,10) **\dir{-} ?(.31)*\dir{>};
    (13,-9)*{\lambda};
    (0,4)*{\bullet}+(3,1)*{\scs 3};
    (10,8)*{\scs };
    (-10,-8)*{\scs };
    \endxy
\;\; = \;\; -\;\;
 \vcenter{   \xy 0;/r.17pc/:
    (-4,-6)*{};(-4,-4)*{} **\dir{-};
    (-4,-4)*{};(4,4)*{} **\crv{(-4,-1) & (4,1)}?(1)*\dir{>};
    (4,-4)*{};(-4,4)*{} **\crv{(4,-1) & (-4,1)}?(1)*\dir{<};?(0)*\dir{<};
    (-4,4)*{};(4,12)*{} **\crv{(-4,7) & (4,9)};
    (4,4)*{};(-4,12)*{} **\crv{(4,7) & (-4,9)}?(1)*\dir{>};
    (12,-4)*{};(12,14)*{} **\dir{-} *\dir{>};
    (-4,12)*{};(4,12)*{} **\crv{(-4,16) & (4,16)};
    (4,-4)*{};(12,-4)*{} **\crv{(4,-8) & (12,-8)}?(1)*\dir{>};
  (15,8)*{\l};
     (-6,-3)*{\scs i};
     (6.5,-3)*{\scs i};
    (4,12)*{\bullet}+(3,1)*{\scs 3};
 \endxy}
  \;\; + \;\;
   \sum_{ \xy  (0,3)*{\scs f_1+f_2+f_3}; (0,0)*{\scs =\lambda_i-1};\endxy}
    \vcenter{\xy 0;/r.17pc/:
    (15,10)*{\l};
    (-8,0)*{};
  (8,0)*{};
  (-4,-17)*{}; (-4,-15)*{} **\dir{-};
  (-4,-15)*{}="b1";
  (4,-15)*{}="b2";
  "b2";"b1" **\crv{(5,-8) & (-5,-8)}; ?(.05)*\dir{<} ?(.93)*\dir{<}
  ?(.8)*\dir{}+(0,-.1)*{\bullet}+(-3,2)*{\scs f_3};
  (-4,15)*{}="t1";
  (4,15)*{}="t2";
  "t2";"t1" **\crv{(5,8) & (-5,8)}; ?(.15)*\dir{>} ?(.95)*\dir{>}
  ?(.4)*\dir{}+(0,-.2)*{\bullet}+(3,-2)*{\scs \; f_1};
  (0,0)*{\iccbub{\scs \quad -\l_i-1+f_2}{i}};
  (7,-13)*{\scs i};
  (-7,13)*{\scs i};
    (12,-15)*{};(12,16)*{} **\dir{-} *\dir{>};
    (-4,15)*{};(4,15)*{} **\crv{(-4,20) & (4,20)}?(0);
    (4,-15)*{};(12,-15)*{} **\crv{(4,-18) & (12,-18)};
  (3.5,16)*{\bullet}+(3,1)*{\scs 3};
  \endxy}
\]
\[
= \;\; \left(
\xy 0;/r.15pc/:
 (0,0)*{\icbub{5}{i}};
 \endxy
-2
\xy 0;/r.15pc/:
 (0,8)*{\icbub{4}{i}};
 (0,-8)*{\icbub{3}{i}};
 \endxy
+\left(
\xy 0;/r.15pc/:
 (0,0)*{\icbub{3}{i}};
 \endxy
\right)^3 \right)
\xy 0;/r.17pc/:
    (0,-10);(0,10) **\dir{-} *\dir{>};
\endxy
\;\;+\;\; \left(
\xy 0;/r.15pc/:
 (0,0)*{\icbub{4}{i}};
 \endxy
-\left(
\xy 0;/r.15pc/:
 (0,0)*{\icbub{3}{i}};
 \endxy
\right)^2 \right)
\xy 0;/r.17pc/:
    (0,-10);(0,10) **\dir{-} *\dir{>};
\endxy
\;\;+\;\;
\xy 0;/r.15pc/:
 (0,0)*{\icbub{3}{i}};
 \endxy
\;
\xy 0;/r.17pc/:
    (0,-10);(0,10) **\dir{-} *\dir{>};
\endxy
\]

which gives equation \eqref{sl33dot}.
\item Equation \eqref{sl3airlock} follows from the degree-zero bubble
$
\xy 0;/r.15pc/:
 (-6,0)*{};
  (6,0)*{};
  (-4,0)*{}="t1";
  (4,0)*{}="t2";
  "t2";"t1" **\crv{(4,6) & (-4,6)}?(.7)*\dir{}+(-2,0)*{\scs i};
   ?(0)*\dir{<} ?(.95)*\dir{<};
  "t2";"t1" **\crv{(4,-6) & (-4,-6)};
  (6,6)*{\lambda}
\endxy = \id
$ when $a_i = 1$ and $a_{i+1}=2$.
\end{itemize}



%

%
\end{document}